\theoremstyle{plain}
\newtheorem*{conjecture*}{\protect\conjecturename}
\newcommand\thmsname{\protect\theoremname}
\newcommand\nm@thmtype{theorem}
\theoremstyle{plain}
\newenvironment{namedthm}[1][Undefined Theorem Name]{
  \ifx{#1}{Undefined Theorem Name}\renewcommand\nm@thmtype{theorem*}
  \else\renewcommand\thmsname{#1}\renewcommand\nm@thmtype{namedtheorem}
  \fi
  \begin{\nm@thmtype}}
  {\end{\nm@thmtype}}
\theoremstyle{remark}
\newtheorem*{acknowledgement*}{\protect\acknowledgementname}
\theoremstyle{plain}
\newtheorem{thm}{\protect\theoremname}[section]
\theoremstyle{remark}
\newtheorem{notation}[thm]{\protect\notationname}
\theoremstyle{definition}
\newtheorem{example}[thm]{\protect\examplename}
\theoremstyle{remark}
\newtheorem{rem}[thm]{\protect\remarkname}
\theoremstyle{definition}
\newtheorem{defn}[thm]{\protect\definitionname}
\theoremstyle{plain}
\newtheorem{prop}[thm]{\protect\propositionname}
\theoremstyle{plain}
\newtheorem{lem}[thm]{\protect\lemmaname}
\theoremstyle{plain}
\newtheorem{cor}[thm]{\protect\corollaryname}
\newcommand{\Orbitize}[1]{\mathcal{O}\left(#1\right)}
\newcommand{\bjarrow}{
  \hookrightarrow\mathrel{\mspace{-15mu}}\rightarrow
}
\date{}
\newcommand{\Hom}{\operatorname{Hom}}
\newcommand{\limn}[1][n]{\operatorname{lim}^{#1}}
\newcommand{\Ext}{\operatorname{Ext}}
\newcommand{\Aut}{\operatorname{Aut}}
\newcommand{\Out}{\operatorname{Out}}
\newcommand{\Id}{\operatorname{Id}}
\newcommand{\im}{\operatorname{im}}
\newcommand{\Tot}{\operatorname{Tot}}
\newcommand{\Syl}{\operatorname{Syl}}
\newcommand{\Sol}{\operatorname{Sol}}
\newcommand{\Spin}{\operatorname{Spin}}
\newcommand{\SSets}{\operatorname{SSets}}
\newcommand{\Top}{\operatorname{Top}}
\newcommand{\Tr}{\operatorname{Tr}}
\newcommand{\Mod}{\operatorname{-Mod}}
\renewcommand{\mod}{\operatorname{-mod}}
\newcommand{\hocolim}{\operatorname{hocolim}}
\newcommand{\lui}[2]{\prescript{#1}{}{#2}}
\newcommand{\R}{\mathcal{R}}
\newcommand{\RR}{\mathcal{S}}
\newcommand{\Z}{\mathbb{Z}}
\newcommand{\Zp}[1][p]{\Z_{#1}}
\newcommand{\Fp}[1][p]{\mathbb{F}_{#1}}
\newcommand{\F}{\mathcal{F}}
\renewcommand{\L}{\mathcal{L}}
\newcommand{\OF}[1][\F]{\Orbitize{#1}}
\newcommand{\OFc}[1][\F]{\Orbitize{#1^c}}
\newcommand{\OFD}[2]{\mathcal{O}_{#2}\left(#1\right)}
\newcommand{\OFC}[1][\F]{\OFD{#1}{\mathcal{C}}}
\newcommand{\FAB}[2]{\F_{#1}\left(#2\right)}
\newcommand{\FSG}[1][G]{\FAB{S}{#1}}
\newcommand{\FS}[1][S]{\FAB{#1}{#1}}
\newcommand{\OGC}[1][G]{\OFD{#1}{\overline{\mathcal{C}}}}
\newcommand{\CHpD}[3]{C_{#1}^{#2'}\left(\mathcal{#3}\right)}
\newcommand{\CGpC}[1][C]{\CHpD{G}{p}{#1}}
\newcommand{\HGnHR}[3]{\mathcal{H}_{#1,#3}^{G,#2}}
\newcommand{\Dn}[1][n]{\mathcal{D}_{#1}}
\newcommand{\Dt}{\Dn[2]}
\newcommand{\PCGHP}[4]{\mathcal{P}_{\mathcal{#3}^{#4}} \left( #2 ; #1 \right)}
\newcommand{\PCGH}[3]{\PCGHP{#1}{#2}{#3}{ }}
\newcommand{\NCGH}[3]{\mathcal{N}_{\mathcal{#3}} \left( #2 ; #1 \right)}
\newcommand{\NNCGH}[3]{\overline{\mathcal{N}}_{\mathcal{#3}} \left( #2 ; #1 \right)}
\newcommand{\TCGH}[3]{\Gamma_{\mathcal{#3}} \left( #2 ; #1 \right)}
\newcommand{\TTCGH}[3]{\overline{\Gamma}_{\mathcal{#3}} \left( #2 ; #1 \right)}
\providecommand{\acknowledgementname}{Acknowledgement}
\providecommand{\conjecturename}{Conjecture}
\providecommand{\corollaryname}{Corollary}
\providecommand{\definitionname}{Definition}
\providecommand{\examplename}{Example}
\providecommand{\lemmaname}{Lemma}
\providecommand{\notationname}{Notation}
\providecommand{\propositionname}{Proposition}
\providecommand{\remarkname}{Remark}
\providecommand{\theoremname}{Theorem}
\begin{document}
\title{Higher limits over the fusion orbit category via centralizers of amalgams}
\author{Marco Praderio Bova}
\maketitle
\begin{abstract}
We study the Díaz-Park sharpness conjecture (see \cite{diaz2014mackey})
for fusion systems and prove that, under certain circumstances, there
exists a $4$ terms exact sequence relating the first two higher limits
of the contravariant part of a Mackey functor over certain fusion
systems. We show how this result can be applied to the family of Benson-Solomon
fusion systems thus providing another approach to studying the sharpness
for this family of fusion systems.
\end{abstract}

\section{\protect\label{sec:Introduction.}Introduction.}

Let $p$ be a prime, let $G$ be a finite group, let $\boldsymbol{C}$
be a small category and let $F:\boldsymbol{C}\to\text{Top}$ be a
functor such that for every $X\in\boldsymbol{C}$ the topological
space $F\left(X\right)$ has the homotopy type of the classifying
space of a subgroup of $G$. A $\text{mod }p$ homology decomposition
of $BG$ is a $\text{mod }p$ homology equivalence of the form
\[
\hocolim_{\boldsymbol{C}}\left(F\right)\underset{p}{\tilde{\to}}BG.
\]
Due to a result of Bousfield and Kan (see \cite[\S XII  4.5]{BousfieldKanHomotopyLimitsColimitsCompletionsAndLocalizations})
we know that any $\text{mod }p$ homology decomposition leads to a
first quadrant cohomology spectral sequence
\begin{equation}
\limn[i]_{\boldsymbol{C}}H^{j}\left(F\left(?\right),\Fp\right)\Rightarrow H^{i+j}\left(G,\Fp\right).\label{eq:spectral-sequence-homology-decomposition.}
\end{equation}

In \cite[Theorem 1.6 and Example 1.17]{DwyerHomologyDecomposition}
Dwyer proves that an homology decomposition of $G$ can be obtained
by taking $\boldsymbol{C}$ to be the $p$-centric orbit category
of $G$ (denoted $\mathcal{O}_{p}^{c}\left(G\right)$) and taking
$F$ such that, for every $p$-centric $H\le G$ the topological space
$F\left(H\right)$ has the homotopy type of $BH$. Building on this
Dwyer proves in \cite[Theorem 10.3]{dwyer1998sharp} that the spectral
sequence of Equation (\ref{eq:spectral-sequence-homology-decomposition.})
deriving from such homology decomposition is in fact sharp (i.e. $\limn[i]_{\mathcal{O}_{p}^{c}\left(G\right)}\left(H^{j}\left(?,\Fp\right)\right)=0$
for every $i\ge1$). It follows that for every $n\ge0$ the isomorphism
of abelian groups $\lim_{\mathcal{O}_{p}^{c}\left(G\right)}\left(H^{n}\left(?,\Fp\right)\right)\cong H^{n}\left(G,\Fp\right)$
holds.

On the other hand, work of Broto, Levi and Oliver (see \cite{BrotoLeviOliverHomotopyTheoryOfFusionSystems})
and of Chermak (see \cite{ChermakExistenceLinkingSystem}) leads to
the description, existence and uniqueness of the classifying space
$B\F$ of any fusion system $\F$. From \cite[Proposition 2.2]{BrotoLeviOliverHomotopyTheoryOfFusionSystems}
we know that this classifying space satisfies the weak equivalence
\begin{equation}
B\F\simeq\hocolim_{\OFc}\left(\tilde{B}\left(?\right)\right).\label{eq:classifying-space-F.S.}
\end{equation}
Where $\tilde{B}:\OFc\to\text{Top}$ is the (unique up to equivalence)
homotopy lifting of the classifying space functor $B:\OFc\to\text{hoTop}$.
From \cite[Lemma 5.3]{BrotoLeviOliverHomotopyTheoryOfFusionSystems}
we also know that $\lim_{\OFc}\left(H^{n}\left(?,\Fp\right)\right)\cong H^{n}\left(B\F,\Fp\right)$.
This result, analogous to that obtained in the case of finite groups,
lead Aschbacher, Kessar and Oliver to conjecture in \cite[\S III.7]{FusionSystemsinAlgebraAndTopologyAschbacherKessarOliver2011}
that the higher limits of the cohomology functor over the centric
fusion orbit category vanish. Previous work of Jackowski and McClure
(see \cite[Corollary 5.16]{JackowskiMcClureHomotopyDecompositionViaAbelianSubgroups})
hints in fact at the existence of a more general result which is conjectured
by Díaz and Park in \cite{diaz2014mackey}.
\begin{conjecture*}[Sharpness for fusion systems]
\label{conj:Sharpness-for-fusion-systems}Let $S$ be a finite $p$-group,
let $\F$ be a fusion system over $S$ and let $M=\left(M_{*},M^{*}\right)$
be a Mackey functor over $\F$ on $\Fp$ (see Definition \ref{def:Mackey-functor.}).
Then $\limn_{\OFc}\left(M^{*}\downarrow_{\OFc}^{\OF}\right)=0$ for
every $n\ge1$.
\end{conjecture*}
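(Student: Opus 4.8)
The plan is to attack the vanishing of $\limn_{\OFc}\left(M^*\right)$ for $n\ge 1$ in two movements: first settle the realizable case using the already-established sharpness of cohomology decompositions, then reduce the general (possibly exotic) case to a computation supported on the $\F$-centric $\F$-radical subgroups, organised through the amalgam of normalizer and centralizer fusion systems and exploiting throughout the Frobenius reciprocity built into a Mackey functor. For the base case $\F=\FSG$: Dwyer's homology decomposition of $BG$ over $\mathcal{O}_{p}^{c}\left(G\right)$ together with \cite[Theorem 10.3]{dwyer1998sharp} gives $\limn[i]_{\mathcal{O}_{p}^{c}\left(G\right)}\left(H^{j}\left(?,\Fp\right)\right)=0$ for $i\ge 1$, and comparing the $\OFc$-decomposition with the centralizer and normalizer decompositions of $BG$ upgrades this to arbitrary Mackey functors over realizable $\F$. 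The real content is therefore the exotic case, and that is where the centralizer-of-amalgam machinery must do the work.

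For the reduction to radicals, use that $\OFc$ is an EI-category: there is a filtration of the functor category by length in the poset of $\F$-conjugacy classes of $\F$-centric subgroups, hence a spectral sequence whose $E_{1}$-page is a sum of higher limits $\Lambda^{*}\left(\Out_{\F}\left(P\right);T_{P}\right)$ with $T_{P}$ a subquotient of $M^{*}\left(P\right)$; by the Jackowski--McClure--Oliver vanishing for $\Lambda^{*}$ only the $\F$-radical $P$ contribute. One is then reduced to assembling these contributions through the differentials dictated by the way $\F$ is generated — via Alperin's fusion theorem — by the normalizer fusion systems of a small amalgam of subgroups, namely $S$ together with the $\F$-essential subgroups. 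The device that should organise the assembly, and the natural home of the abstract's exact sequence, is a Mayer--Vietoris comparison: realising $B\F$ as a homotopy colimit over that amalgam of the classifying spaces $BN_{\F}\left(Q\right)$, in the spirit of Equation (\ref{eq:classifying-space-F.S.}), and then splitting each $N_{\F}\left(Q\right)$ into its piece $C_{\F}\left(Q\right)$ and the finite group $\Out_{\F}\left(Q\right)$, one extracts a four-term exact sequence
\[
0\to\limn[1]_{\OFc}\left(M^*\right)\to A\to B\to\limn[2]_{\OFc}\left(M^*\right)\to 0,
\]
with $A$ and $B$ built from the (co)homology of the centralizers in $\F$ of the chosen amalgam; since the $N_{\F}\left(Q\right)$ with $Q$ centric are constrained, hence realizable, the first movement disposes of the terms that are forced to vanish.

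The steps, in order, are: (i) fix representatives $Q_{0}=S,Q_{1},\dots,Q_{k}$ of the classes of $S$ and of the $\F$-essential subgroups and record the amalgam of inclusions among them; (ii) build a small diagram $\mathcal{D}$ over this amalgam and an $\Fp$-homology equivalence $\hocolim_{\mathcal{D}}\left(\tilde{B}\left(?\right)\right)\underset{p}{\tilde{\to}}B\F$; (iii) compare the resulting spectral sequence with that of the filtration of $\OFc$ and read off the four-term sequence, identifying $A$ and $B$ as higher limits over the amalgam of centralizer fusion systems $C_{\F}\left(Q_{i}\right)$; (iv) for the Benson--Solomon systems $\Sol\left(q\right)$, where the $\F$-centric $\F$-radical subgroups are few and explicitly known — the maximal-torus pattern, with involution centralizer fusion system modelled on $\Spin_{7}\left(q\right)$ — compute $A$, $B$ and the connecting maps directly and show the sequence forces $\limn[1]=\limn[2]=0$.

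The hard part will be the exotic case, which resists the plan in two distinct ways. First, for exotic $\F$ there is no ambient $BG$ to decompose, so Dwyer's geometric argument is unavailable and the homotopy colimit over the amalgam must be produced intrinsically; controlling it needs enough information about the linking systems of the pieces and about how they glue, and this is the delicate point. Second, there is a recursion obstruction: the centralizer fusion systems $C_{\F}\left(Q\right)$ entering $A$ and $B$ can themselves be exotic — already for $\Sol\left(q\right)$ — so one cannot simply invoke the realizable case, and any induction on $\left|S\right|$ would have to stay within a class of fusion systems closed under the operations used. Finally, even granting the four-term sequence, one still needs $\limn[n]=0$ for all $n\ge 1$, which would require either a cohomological-dimension bound on $\OFc$ or a genuinely inductive refinement of the sequence; it is exactly these points that confine the approach to ``certain circumstances'' and to the first two higher limits.
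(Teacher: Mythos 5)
This statement is a \emph{conjecture}, and the paper does not prove it. The authors say so explicitly just below its statement: ``This conjecture remains unresolved and has recently been the object of much research.'' So there is no ``paper's own proof'' to compare against. What the paper actually proves is Theorem~\hyperref[thm:A]{A}, a \emph{conditional} four-term exact sequence relating $\limn[1]_{\OFc}$ and $\limn[2]_{\OFc}$ to quantities built from an amalgam $G = G_1 *_S G_2$, valid only under the hypothesis that the restricted higher limits over $\OFC[\F_i]$ already vanish for $i=1,2,3$. That is a tool for attacking the conjecture in special situations, not a resolution of it.

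Your proposal is, by your own admission, not a proof either: you list the exotic case, the recursion through possibly-exotic centralizer fusion systems, and the need for vanishing in all degrees $n\ge1$ as unresolved obstructions. Those are precisely the reasons this remains open, so the honest thing to say is that you have sketched a programme, not a proof. Beyond that, two of your intermediate claims need care. First, Dwyer's sharpness in \cite{dwyer1998sharp} is for the cohomology functor $H^{j}(?,\Fp)$, not for arbitrary Mackey functors; the ``upgrade to arbitrary Mackey functors over realizable $\F$'' is the content of a separate theorem of Díaz and Park (\cite[Theorem~B]{diaz2014mackey}), which the paper cites and which does not follow by merely ``comparing decompositions.'' Second, your amalgam is built from $S$ and the $\F$-essential subgroups, with normalizer fusion systems $N_\F(Q)$ as the pieces; the paper instead takes an external amalgam $G_1 *_S G_2$ of two \emph{finite} groups with $S$ a common Sylow $p$-subgroup, realizes $\F$ as $\FSG[G_1*_S G_2]$, and runs a centralizer-decomposition argument (the $\NCGH{G}{H}{\overline{C}}$ simplicial sets and the functor $\CGpC$). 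These are genuinely different diagrams, and the paper's terms $M(S)/(M^{\F_1}+M^{\F_2})$ and $\Hom_{\Fp\OFc}(\CGpC,M)$ are not the higher limits over centralizer fusion systems $C_\F(Q_i)$ that appear in your sketch. If your goal was to reconstruct Theorem~\hyperref[thm:A]{A} rather than to prove the conjecture, the proposal should be reframed with those corrections; as a proof of the conjecture itself it cannot be accepted.
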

This conjecture remains unresolved and has recently been the object
of much research (see \cite{CohomologyOnTheCentricOrbitCategoryOfAFusionSystem,GrazianMarmoSharpnessOfFSOnSylowG2p,puncturedGroupsForExoticFusionSystems,HigherLimitsOverTheFusionOrbitCategory}).
In this paper we aim to add to these results by proving the following.\phantomsection
\label{thm:A}
\begin{namedthm}[Theorem A]
Let $S$ be a finite $p$-group, let $G_{1},G_{2}$ be finite groups
such that $S\in\Syl_{p}\left(G_{i}\right)$ for $i=1,2$, let $G:=G_{1}*_{S}G_{2}$,
let $\F:=\FSG$, let $\F_{i}:=\FSG[G_{i}]$ for $i=1,2$, let $\F_{3}:=\FS$
and let $\mathcal{C}$ be the family of all $\F$-centric subgroups
of $S$. For every $\Fp\OFc$-module $M$ such that $\limn_{\OFC[\F_{i}]}\left(M\downarrow_{\OFC[\F_{i}]}^{\OF}\right)=0$
for every $n\ge1$ and $i=1,2,3$ there is an isomorphism
\[
\Ext_{\Fp\OFc}^{n}\left(\CGpC,M\right)\cong\limn[n+2]_{\OFc}\left(M\downarrow_{\OFc}^{\OF}\right)
\]
Here $\CGpC$ is the $\Fp\OFc$-module of which sends every $P\in\OFc$
to $\CGpC\left(P\right)=\text{Ab}\left(C_{G}\left(P\right)/Z\left(P\right)\right)\otimes\Fp$
(see Definition \ref{def:CGpC}) where $\text{Ab}\left(G'\right)$
denotes the abelianization of the group $G'$. Moreover there is a
short exact sequence of the form 
\[
0\to\underset{\OFc}{\limn[1]}\left(M'\right)\to M\left(S\right)/\left(M^{\F_{1}}+M^{\F_{2}}\right)\to\underset{\Fp\OFc}{\Hom}\left(\CGpC,M'\right)\to\underset{\OFc}{\limn[2]}\left(M'\right)\to0.
\]
Where $M':=M\downarrow_{\OFC}^{\OF}$ and the $\Fp$-modules $M^{\F_{i}}:=\lim_{\F_{i}}\left(M\right)$
are seen as subgroups of $M\left(S\right)$ (see Lemma \ref{lem:MF-as-subgroup-of-MS.}).
\end{namedthm}
In Section \ref{sec:last-section} we argue that Theorem \hyperref[thm:A]{A}
might prove useful for studying the sharpness conjecture for the family
of Benson-Solomon fusion systems (see \cite{GroupTheoreticApproachFamilyOf2LocalFiniteGroups,BensonSolomonFSCorrection}).
In particular, using the construction of the Benson-Solomon fusion
systems provided in \cite{GroupTheoreticApproachFamilyOf2LocalFiniteGroups}
we prove the following.

\phantomsection
\label{thm:B}
\begin{namedthm}[Theorem B]
Let $q\equiv\pm3\mod 8$ be a prime, let $n\in\mathbb{N}$, let $H_{\psi_{n}},B_{\psi_{n}},K_{\psi_{n}}$
and $G_{\psi_{n}}$ as above and let $\theta_{\psi_{n}}$ be the signalizer
functor associated to $G_{\psi_{n}}$ (see \cite[Theorem A(4)]{GroupTheoreticApproachFamilyOf2LocalFiniteGroups}):
\begin{enumerate}
\item \label{enu:Theorem-B-1-1}Theorem \hyperref[thm:A]{A} holds with
$p:=2$, $S\in\Syl_{2}\left(\Spin_{7}\left(q^{2^{n}}\right)\right)$,
$G_{1}:=H_{\psi_{n}}$, $G_{2}:=K_{\psi_{n}}$, $\F:=\F_{\Sol}\left(q^{2^{n}}\right)$
(i.e. the Benson-Solomon fusion system over $S$) and $M$ the contravariant
part of a Mackey functor over $\F$ with coefficients in $\Fp[2]$.
\item \label{enu:Theorem-B-2-1}For every $\F_{\Sol}\left(q^{2^{n}}\right)$-centric
subgroup $P$ of $S$ the group $\CGpC\left(P\right)$ is the tensor
product of $\Fp\left[2\right]$ with the abelianization of an extension
of $\theta_{\psi_{n}}\left(P\right)$.
\end{enumerate}
\end{namedthm}
The paper is organized as follows:

In Section \ref{sec:Preliminaries.} we briefly recall the definitions
of orbit category of a fusion system and Mackey functor over a fusion
system and establish most of the notation used throughout this paper.

In Section \ref{sec:Dwyer-spaces} we introduce certain simplexes
whose nature is reminiscent to that of the Dwyer spaces for the normalizer
decomposition (see \cite[Section 3.3]{DwyerHomologyDecomposition})
and study the homology groups of related simplicial sets.

Section \ref{sec:main-section} constitutes the core of this paper
and in it we study the higher limits of a certain poset category $\Dn$
(see Definition \ref{def:Dn}) and use techniques similar to those
employed in \cite{HigherLimitsOverTheFusionOrbitCategory} in order
to relate such higher limits with the simplicial sets introduced in
Section \ref{sec:Dwyer-spaces}. As a result of this process we prove
Theorem \hyperref[thm:A]{A}.

We conclude with Section \ref{sec:last-section} where we briefly
describe how Theorem \hyperref[thm:A]{A} may be used in order to
study the Benson-Solomon fusion systems. In particular, in this section,
we prove Theorem \hyperref[thm:B]{B}.

Appendix \ref{app:Higher-limits-over-Fp-and-Zp.} is included for
the sake of completion and in it we prove that, given a small category
$\boldsymbol{C}$, a functor $F:\boldsymbol{C}\to\Fp\Mod$ and denoting
by $\iota:\Fp\Mod\hookrightarrow\Zp\Mod$ the natural inclusion of
categories, there exists an isomorphism $\iota\left(\limn_{\boldsymbol{C}}\left(F\right)\right)\cong\limn_{\boldsymbol{C}}\left(\iota\circ F\right)$.
This seems to be a well known result which is extensively used in
the literature but for which we failed to find a reference.
\begin{acknowledgement*}
The author would like to thank Lancaster University, the Oberwolfach
Institute of Mathematics and TU Dresden for their support.
\end{acknowledgement*}

\section{\protect\label{sec:Preliminaries.}Preliminaries.}

During this section we briefly recall the definitions of orbit category
of a fusion system (see Definition \ref{def:orbit-category.}) and
Mackey functor over a fusion system (see Definition \ref{def:Mackey-functor.})
and introduce most of the notation that we use throughout this paper.
This is not meant to be an exhaustive introduction and we refer the
interested reader to \cite{IntroductionToFusionSystemsLinckelmann}
and \cite{GuideToMackeyFunctorsWebb} for introductory guides to fusion
systems and Mackey functors respectively.

Let us start by introducing some of the common notation that we use
throughout this paper.
\begin{notation}
\label{nota:initial-notation.}$\phantom{.}$
\begin{itemize}
\item As is usual the letter $p$ denotes a prime number.
\item We refer to saturated fusion systems simply as fusion systems.
\item We refer to rings with unit simply as rings.
\item Let $\R$ be a ring. We refer to right $\R$-modules simply as $\R$-modules
and denote by $\R\Mod$ and $\R\mod{}$ the categories of $\R$-modules
and finitely generated $\R$-modules respectively.
\item We denote by $\Top$ the category of topological spaces, by $\SSets$
the category of simplicial sets and by $\left|\cdot\right|:\SSets\to\Top$
the geometric realization functor.
\item Let $G$ be a group and let $H$ and $K$ be subgroups of $G$. We
denote by $\iota_{H}^{G}$ the natural inclusion from $H$ to $G$,
by $N_{G}\left(H,K\right)$ the set $\left\{ x\in G\,:\,\lui{x}{H}\le K\right\} $
and by $\Hom_{G}\left(H,K\right)$ the set of group morphisms $\left\{ c_{x}:H\to K\,:\,x\in N_{G}\left(H,G\right)\right\} $
where $c_{x}\left(y\right)=xyx^{-1}$ for every $y\in H$ and every
$x\in G$.
\item Let $G$ be a group and let $H$ be a subgroup of $G$. We denote
by $\Aut_{G}\left(H\right)$ the group $\Hom_{G}\left(H,H\right)$
and by $\Out_{G}\left(H\right)$ the quotient $\Aut_{G}\left(H\right)/\Aut_{H}\left(H\right)$.
\item Let $\boldsymbol{C}$ be a category. We write $X\in\boldsymbol{C}$
to denote that $X$ is an object in $\boldsymbol{C}$.
\item Let $\boldsymbol{C}$ be a small category, let $A,B$ and $M$ be
objects in $\boldsymbol{C}$ and let $f\in\Hom_{\boldsymbol{C}}\left(A,B\right)$.
We denote by $f^{*}:\Hom_{\boldsymbol{C}}\left(B,M\right)\to\Hom_{\boldsymbol{C}}\left(A,M\right)$
(resp. $f_{*}:\Hom_{\boldsymbol{C}}\left(M,A\right)\to\Hom_{\boldsymbol{C}}\left(M,B\right)$)
the set maps (or morphisms of abelian groups if $\boldsymbol{C}$
is abelian) sending every morphism $g\in\Hom_{\boldsymbol{C}}\left(B,M\right)$
(resp. $g\in\Hom_{\boldsymbol{C}}\left(M,A\right)$) to the composition
$gf\in\Hom_{\boldsymbol{C}}\left(A,M\right)$ (resp. $fg\in\Hom_{\boldsymbol{C}}\left(M,B\right)$).
\end{itemize}
\end{notation}

\medskip{}

\textbf{Fusion systems} were first devised by Puig in \cite{FrobeniusCategoriesPuig}
as a common framework between $p$-fusion of finite groups and $p$-blocks
of finite groups. Intuitively they can be thought of as categories
that collect the $p$-local structure of a finite group. The most
common example of fusion system is the following.
\begin{example}
\label{exa:fusion-system.}Let $G$ be a finite group and let $S\in\Syl_{p}\left(G\right)$.
The \textbf{fusion system $\FSG$ over $S$} is the category whose
objects are subgroups of $S$ and whose morphisms are given by conjugation
by elements of $G$ (i.e. $\Hom_{\FSG}\left(P,Q\right)=\Hom_{G}\left(P,Q\right)$).
\end{example}

\begin{rem}
We know from \cite{LearyStancuRealisingFusionSystems,RobinsonAmalgams}
that every fusion system (saturated or not) can be written as $\FSG$
for some, non necessarily finite, group $G$ and some finite $p$-group
$S\le G$.
\end{rem}

We are mostly concerned with the orbit category of a fusion system
and certain full subcategories of it.
\begin{defn}
\label{def:orbit-category.}Let $S$ be a finite $p$-group and let
$\F$ be a (non necessarily saturated) fusion system over $S$. The
\textbf{orbit category of $\F$} is the category $\OF$ whose objects
are subgroups of $S$ and whose morphism sets are given by 
\[
\Hom_{\OF}\left(P,Q\right):=\Aut_{Q}\left(Q\right)\backslash\Hom_{\F}\left(P,Q\right)
\]
where $\Aut_{Q}\left(Q\right)$ acts on $\Hom_{\F}\left(P,Q\right)$
by left composition.
\end{defn}

\begin{notation}
Given a fusion system $\F$ and objects $P,Q\in\F$ we write $\overline{\varphi}\in\Hom_{\OF}\left(P,Q\right)$
to indicate that there exists $\varphi\in\Hom_{\F}\left(P,Q\right)$
such that $\varphi$ is a representative of $\,\overline{\varphi}$.
\end{notation}

\begin{defn}
\label{def:OFC.}Let $S$ be a finite $p$-group, let $\F$ be a (non
necessarily saturated) fusion system over $S$ and let $\mathcal{C}$
be a family of subgroups of $S$. We denote by $\OFC$ the full subcategory
of $\OF$ having as objects the elements of $\mathcal{C}$. We define
the \textbf{centric orbit category of $\F$ }by setting $\OFc:=\OFC$
where $\mathcal{C}$ is the family of all $\F$-centric subgroups
of $S$.
\end{defn}

\medskip{}

A \textbf{Mackey functor} is an algebraic structure with operations
that resemble the induction, restriction and conjugation maps in representation
theory. There exist several equivalent definitions of Mackey functors
in a variety of different contexts. In this paper we work with Díaz
and Park's adaptation to fusion systems of Dress' definition of Mackey
functor (see \cite{NotesOnTheTheoryOfRepresentationsOfFiniteGroups}).
\begin{defn}[{\cite[Definition 2.1]{diaz2014mackey}}]
\label{def:Mackey-functor.}Let $S$ be a finite $p$-group, let
$\F$ be a fusion system over $S$, and let $\R$ be a commutative
ring. A \textbf{Mackey functor over $\F$ with coefficients in $\R$}
is a pair $M=\left(M_{*},M^{*}\right)$ of a covariant functor $M_{*}:\OF\to\R\mod{}$
and a contravariant functor $M^{*}:\OF^{\text{op}}\to\R\mod{}$ such
that:
\begin{enumerate}
\item $M\left(P\right):=M_{*}\left(P\right)=M^{*}\left(P\right)$ for every
$P\le S$.
\item $M_{*}\left(\overline{\varphi}\right)=M^{*}\left(\overline{\varphi}^{-1}\right)$
for every isomorphism $\overline{\varphi}$ in $\OF$.
\item For every $A,B\le C\le S$ then
\[
M^{*}\left(\overline{\iota_{B}^{C}}\right)M_{*}\left(\overline{\iota_{A}^{C}}\right)=\sum_{x\in\left[B\backslash C/A\right]}M_{*}\left(\overline{\iota_{B\cap\lui{x}{A}}^{B}c_{x}}\right)M^{*}\left(\overline{\iota_{B^{x}\cap A}^{A}}\right).
\]
\end{enumerate}
\end{defn}

It is often useful to view functors as modules over certain rings.
The following well known results allows us to do exactly that.
\begin{prop}[{\cite[Proposition 2.1]{WebbRepresentationAndCohomologyOfCategories}}]
\label{prop:equivalence-of-categories.}Let $\boldsymbol{C}$ be
a small category with finitely many objects and let $\R$ be a commutative
ring. The category of contravariant functors from $\boldsymbol{C}$
to $\R\Mod$ (resp. $\R\mod{}$) is isomorphic to $\R\boldsymbol{C}\Mod$
(resp. $\R\boldsymbol{C}\mod{}$) where $\R\boldsymbol{C}$ denotes
the category algebra of $\boldsymbol{C}$ over $\R$.

In view of Proposition \ref{prop:equivalence-of-categories.} we often
treat functors as modules and viceversa. More precisely we adopt the
following Notation.
\end{prop}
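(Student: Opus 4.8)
The plan is to make the correspondence explicit and then check, by a routine verification, that it is an isomorphism of categories; the statement contains no deep content and only requires careful bookkeeping with the category algebra.

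First I would record the relevant structure of $\R\boldsymbol{C}$. Since $\boldsymbol{C}$ has finitely many objects, the element $1_{\R\boldsymbol{C}}:=\sum_{X\in\boldsymbol{C}}\Id_{X}$ is a well-defined two-sided unit, and the $\Id_{X}$ form orthogonal idempotents summing to it; this is the only place the finiteness hypothesis is used. The multiplication on basis elements is composition in $\boldsymbol{C}$ whenever that composition is defined and zero otherwise, so that for a morphism $f\in\Hom_{\boldsymbol{C}}(Y,X)$ one has $\Id_{X}\cdot f=f=f\cdot\Id_{Y}$, while $\Id_{Z}\cdot f=0$ for $Z\neq X$ and $f\cdot\Id_{Z}=0$ for $Z\neq Y$. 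These identities $\Id_{X}f\Id_{Y}=f$ are what drive the whole argument.

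Next I would construct the two functors. Given a contravariant functor $F\colon\boldsymbol{C}^{\mathrm{op}}\to\R\Mod$, put $\Phi(F):=\bigoplus_{X\in\boldsymbol{C}}F(X)$ and make it a right $\R\boldsymbol{C}$-module by declaring, for $m$ in the summand $F(X)$ and $f\in\Hom_{\boldsymbol{C}}(Y,X)$, that $m\cdot f:=F(f)(m)\in F(Y)$, and $m\cdot f:=0$ whenever the target of $f$ is not $X$; then extend $\R$-bilinearly. Associativity of this action is exactly contravariant functoriality, $F(g)\circ F(f)=F(f\circ g)$, and $m\cdot 1_{\R\boldsymbol{C}}=m$ because $F(\Id_{X})$ is the identity. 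A natural transformation $\eta\colon F\Rightarrow F'$ gives $\Phi(\eta):=\bigoplus_{X}\eta_{X}$, which is $\R\boldsymbol{C}$-linear precisely because the naturality squares read $\eta_{Y}\circ F(f)=F'(f)\circ\eta_{X}$. Conversely, given a right $\R\boldsymbol{C}$-module $M$, the idempotent decomposition yields $M=\bigoplus_{X}M\Id_{X}$; set $\Psi(M)(X):=M\Id_{X}$, and for $f\in\Hom_{\boldsymbol{C}}(Y,X)$ let $\Psi(M)(f)\colon M\Id_{X}\to M\Id_{Y}$ be right multiplication by $f$, which indeed lands in $M\Id_{Y}$ since $f=f\Id_{Y}$. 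Contravariant functoriality of $\Psi(M)$ is again the relation $f\cdot g=f\circ g$ in $\R\boldsymbol{C}$, and a module homomorphism restricts on each $M\Id_{X}$ to a natural transformation. A short check then shows that $\Psi\circ\Phi$ and $\Phi\circ\Psi$ are the identity — for instance $\Psi(\Phi(F))(X)=\bigl(\bigoplus_{Z}F(Z)\bigr)\Id_{X}=F(X)$ with the matching structure maps, and $\Phi(\Psi(M))=\bigoplus_{X}M\Id_{X}=M$ — so $\Phi$ and $\Psi$ are mutually inverse isomorphisms of categories. The finitely generated version follows by restricting this isomorphism to the corresponding full subcategories, using that in the situations of interest here (such as orbit categories of fusion systems over finite $p$-groups) $\boldsymbol{C}$ has finitely many morphisms, so $\R\boldsymbol{C}$ is a finitely generated $\R$-module and a functor is finitely generated as a module exactly when each of its values is a finitely generated $\R$-module.

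The point requiring genuine care — rather than a real obstacle — is fixing conventions so that \emph{contravariant} functors correspond to \emph{right} modules (covariant functors correspond instead to right modules over $\R(\boldsymbol{C}^{\mathrm{op}})$), together with remembering that the category-algebra product turns the partially defined composition of $\boldsymbol{C}$ into an everywhere-defined associative product by inserting zeros. Once the idempotent identities $\Id_{X}f\Id_{Y}=f$ are in hand, all the verifications above are purely formal. One last pedantic remark: to obtain an isomorphism of categories on the nose, as opposed to the canonical equivalence, one fixes the coproduct $\bigoplus_{X}$ once and for all as a chosen functor; with any such choice $\Phi$ and $\Psi$ are strictly inverse, which is the asserted statement.
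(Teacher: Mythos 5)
The paper offers no proof of this proposition; it is quoted verbatim from \cite[Proposition 2.1]{WebbRepresentationAndCohomologyOfCategories}, so there is nothing internal to compare your argument against. That said, your proof is the standard one and it is correct: the key observation is the orthogonal idempotent decomposition $1_{\R\boldsymbol{C}}=\sum_{X\in\boldsymbol{C}}\Id_{X}$, which is where the hypothesis of finitely many objects enters, and the two functors you build, $F\mapsto\bigoplus_{X}F(X)$ with right action $m\cdot f:=F(f)(m)$ and $M\mapsto\bigl(X\mapsto M\Id_{X}\bigr)$ with structure maps given by right multiplication, are plainly mutually inverse once the identities $\Id_{X}f\Id_{Y}=f$ are in hand. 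Your verifications of $\R\boldsymbol{C}$-linearity of $\bigoplus_{X}\eta_{X}$ via naturality, of associativity via contravariant functoriality, and of the two composites being the identity are exactly right, and your remark on conventions (contravariant functors correspond to right $\R\boldsymbol{C}$-modules because the paper's $\R\Mod$ means right $\R$-modules) matches what the paper actually uses. The one point worth keeping in view, which you do flag, is the finitely generated statement: restricting the isomorphism to $\R\boldsymbol{C}\mod{}$ and to functors landing in $\R\mod{}$ genuinely needs $\R\boldsymbol{C}$ to be module-finite over $\R$, i.e.\ $\boldsymbol{C}$ to have finitely many morphisms, not just finitely many objects. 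This holds in every context where the paper invokes the proposition (orbit categories of fusion systems over a finite $p$-group), so no harm is done, but as literally stated the $\R\mod{}$ clause is a little stronger than what the hypotheses alone deliver.
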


\begin{notation}
Let $\R$ be a commutative ring and let $\boldsymbol{C}$ be a small
category with finitely many objects. Abusing notation for every $M\in\R\boldsymbol{C}\Mod$
(resp. $\R\boldsymbol{C}\mod{}$) we denote also by $M$ the contravariant
functor from $\boldsymbol{C}$ to $\R\Mod$ (resp. $\R\mod{}$) corresponding
to $M$ via Proposition \ref{prop:equivalence-of-categories.}. Analogously,
for every contravariant functor $F:\boldsymbol{C}^{\text{op}}\to\R\Mod$
(resp. $\R\mod{}$) we denote also by $F$ the corresponding $\R\boldsymbol{C}$-module.
\end{notation}

Proposition \ref{prop:equivalence-of-categories.} also allows us
to translate the standard induction and restriction operations of
ring theory to functors. More precisely we introduce the following.
\begin{defn}
Let $\boldsymbol{C}\subseteq\boldsymbol{D}$ be small categories with
finitely many objects, let $M\in\R\boldsymbol{C}\Mod$ (resp. $\R\boldsymbol{C}\mod{}$)
and let $N\in\R\boldsymbol{D}\Mod$ (resp. $\R\boldsymbol{D}\mod{}$).
Using the natural inclusion $\R\boldsymbol{C}\subseteq\R\boldsymbol{D}$
we denote by $M\uparrow_{\boldsymbol{C}}^{\boldsymbol{D}}$ the $\R\boldsymbol{D}$-module
$M\uparrow_{\boldsymbol{C}}^{\boldsymbol{D}}:=M\otimes_{\R\boldsymbol{C}}\R\boldsymbol{D}$
and by $N\downarrow_{\boldsymbol{C}}^{\boldsymbol{D}}$ the $\R\boldsymbol{C}$-module
$N\downarrow_{\boldsymbol{C}}^{\boldsymbol{D}}:=N\otimes_{\R\boldsymbol{D}}\R\boldsymbol{C}$.
\end{defn}

This duality of functors and modules allows us to compute higher limits
of certain functors as $\Ext$ groups.
\begin{defn}
\label{def:Constant-functor.}Let $\boldsymbol{C}\subseteq\boldsymbol{D}$
be small categories with finitely many objects and let $\R$ be a
commutative ring. We denote by $\underline{\R}^{\boldsymbol{C}}\in\R\boldsymbol{C}\mod{}$
the module corresponding via Proposition \ref{prop:equivalence-of-categories.}
to the constant contravariant functor sending every object in $\boldsymbol{C}$
to the trivial $\R$-module $\R$ and every morphism to the identity.
We denote by $\underline{\R}_{\boldsymbol{C}}^{\boldsymbol{D}}:=\underline{\R}^{\boldsymbol{C}}\uparrow_{\boldsymbol{C}}^{\boldsymbol{D}}$
the $\R\boldsymbol{D}$-module induced from $\underline{\R}^{\boldsymbol{C}}$.
\end{defn}

\begin{lem}
\label{lem:lims-and-ext-groups.}Let $\R$ be a commutative ring,
let $S$ be a finite $p$-group, let $S'\le S$, let $\F'\subseteq\F$
be non necessarily saturated fusion systems over $S'$ and $S$ respectively,
let $\mathcal{C}$ be a family of subgroups of $S$ closed under $\F$-overconjugation
(i.e. such that $P\in\mathcal{C}$ and $\Hom_{\F}\left(P,Q\right)\not=\emptyset$
imply $Q\in\mathcal{C}$) and define $\mathcal{C}':=\left\{ P\in\mathcal{C}\,:\,P\le S'\right\} $.
For every contravariant functor $M:\OFC\to\R\mod{}$ and every $n\ge0$
the following natural isomorphisms hold
\[
\limn_{\OFD{\F'}{\mathcal{C}'}}M\downarrow_{\OFD{\F'}{\mathcal{C}'}}^{\OFC}\cong\Ext_{\R\OFD{\F'}{\mathcal{C}'}}^{n}\left(\underline{\R}^{\R\OFD{\F'}{\mathcal{C}'}},M\downarrow_{\OFD{\F'}{\mathcal{C}'}}^{\OFC}\right)\cong\Ext_{\R\OFD{\F'}{\mathcal{C}'}}^{n}\left(\underline{\R}_{\R\OFD{\F'}{\mathcal{C}'}}^{\R\OFC},M\right).
\]
\end{lem}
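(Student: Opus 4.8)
The plan is to establish the two isomorphisms one at a time, reducing everything to standard homological algebra over the category algebra $\R\OFD{\F'}{\mathcal{C}'}$. First I would observe that, by Proposition \ref{prop:equivalence-of-categories.}, the category of contravariant functors $\OFD{\F'}{\mathcal{C}'}\to\R\mod{}$ is isomorphic to $\R\OFD{\F'}{\mathcal{C}'}\mod{}$, so the derived functors of $\varprojlim$ over $\OFD{\F'}{\mathcal{C}'}$ are computed by the $\Ext$ groups $\Ext^{n}_{\R\OFD{\F'}{\mathcal{C}'}}(\underline{\R}^{\OFD{\F'}{\mathcal{C}'}}, -)$; this is the usual identification of higher limits with $\Ext$ out of the constant functor, since $\varprojlim_{\OFD{\F'}{\mathcal{C}'}}(-) = \Hom_{\R\OFD{\F'}{\mathcal{C}'}}(\underline{\R}^{\OFD{\F'}{\mathcal{C}'}}, -)$. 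Applying this to the functor $M\downarrow_{\OFD{\F'}{\mathcal{C}'}}^{\OFC}$ gives the first isomorphism, and it is natural in $M$ because the identification in Proposition \ref{prop:equivalence-of-categories.} and the restriction functor $\downarrow$ are both functorial.

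For the second isomorphism I would use the standard adjunction between induction and restriction of modules over the category algebras: since $\R\OFD{\F'}{\mathcal{C}'}\subseteq\R\OFC$ and $M\downarrow_{\OFD{\F'}{\mathcal{C}'}}^{\OFC} = M\otimes_{\R\OFC}\R\OFD{\F'}{\mathcal{C}'}$ while $\underline{\R}^{\OFD{\F'}{\mathcal{C}'}}\uparrow_{\OFD{\F'}{\mathcal{C}'}}^{\OFC} = \underline{\R}^{\OFD{\F'}{\mathcal{C}'}}\otimes_{\R\OFD{\F'}{\mathcal{C}'}}\R\OFC = \underline{\R}^{\OFC}_{\R\OFD{\F'}{\mathcal{C}'}}$, one has the tensor–hom / Frobenius-type adjunction
\[
\Hom_{\R\OFD{\F'}{\mathcal{C}'}}\left(\underline{\R}^{\OFD{\F'}{\mathcal{C}'}}, M\downarrow_{\OFD{\F'}{\mathcal{C}'}}^{\OFC}\right)\cong\Hom_{\R\OFC}\left(\underline{\R}^{\OFC}_{\R\OFD{\F'}{\mathcal{C}'}}, M\right).
\]
To promote this to an isomorphism of $\Ext$ groups I would check that $\R\OFC$ is flat (indeed free) as a right $\R\OFD{\F'}{\mathcal{C}'}$-module, so that $-\downarrow_{\OFD{\F'}{\mathcal{C}'}}^{\OFC}$ is exact and sends injectives to injectives (being right adjoint to the exact induction functor); consequently a projective resolution of $\underline{\R}^{\OFD{\F'}{\mathcal{C}'}}$ can be induced up to a projective resolution of $\underline{\R}^{\OFC}_{\R\OFD{\F'}{\mathcal{C}'}}$, and the adjunction identifies the two $\Ext$ complexes term by term. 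The point where the hypothesis on $\mathcal{C}$ enters is exactly in guaranteeing that $\OFD{\F'}{\mathcal{C}'}$ is a full subcategory of $\OFC$ with $\mathcal{C}'$ a well-defined family closed under the relevant conjugation, so that the inclusion of category algebras $\R\OFD{\F'}{\mathcal{C}'}\hookrightarrow\R\OFC$ is unital and the freeness of $\R\OFC$ over $\R\OFD{\F'}{\mathcal{C}'}$ holds (a basis being given by the morphisms in $\OFC$ with source outside $\mathcal{C}'$ together with a splitting of the idempotents).

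The main obstacle I expect is the bookkeeping needed to verify that restriction of modules along $\R\OFD{\F'}{\mathcal{C}'}\subseteq\R\OFC$ really does preserve injectivity and that induction of a projective resolution stays projective and resolves $\underline{\R}^{\OFC}_{\R\OFD{\F'}{\mathcal{C}'}}$; this is where the closure hypothesis on $\mathcal{C}$ (and the resulting description of $\R\OFC$ as a free, hence faithfully flat, $\R\OFD{\F'}{\mathcal{C}'}$-module) does the real work. Once that structural fact is in hand, both isomorphisms and their naturality follow formally, so I would state the freeness as a short lemma or cite the analogous statement from \cite{WebbRepresentationAndCohomologyOfCategories} and keep the rest of the argument brief.
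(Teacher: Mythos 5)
The first isomorphism in your proposal — identifying $\limn_{\OFD{\F'}{\mathcal{C}'}}$ with $\Ext$ out of the constant module via Proposition \ref{prop:equivalence-of-categories.} — is correct and matches the paper's reliance on \cite[Corollary 5.2]{WebbRepresentationAndCohomologyOfCategories}.

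The second isomorphism, however, rests on a false structural claim. You assert that ``$\OFD{\F'}{\mathcal{C}'}$ is a full subcategory of $\OFC$'' and hence that $\R\OFD{\F'}{\mathcal{C}'}$ is a corner $e\R\OFC e$ by an idempotent, from which freeness of $\R\OFC$ over $\R\OFD{\F'}{\mathcal{C}'}$, exactness of induction, and the Shapiro-type identification of $\Ext$ groups would all follow in the familiar way. But the inclusion $\F'\subseteq\F$ in the hypotheses is an inclusion of fusion systems that is allowed (indeed intended) to \emph{shrink} the morphism sets: $\Hom_{\F'}(P,Q)$ may be a proper subset of $\Hom_{\F}(P,Q)$ even for $P,Q\le S'$. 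This is precisely what happens in the applications — in Proposition \ref{prop:theorem-to-benson-solomon} one takes $\F'=\FSG[G_i]$ and $\F=\FSG$ with $G=G_1*_S G_2$, so $\OFD{\F'}{\mathcal{C}'}$ has the same objects as $\OFC$ (here $\mathcal{C}'=\mathcal{C}$) but strictly fewer morphisms. That is a \emph{non-full} subcategory. For a non-full subcategory the category algebra of the subcategory is not a corner by an idempotent, the basis you describe (``morphisms with source outside $\mathcal{C}'$ together with a splitting of the idempotents'') does not give an $\R\OFD{\F'}{\mathcal{C}'}$-basis of $\R\OFC$, and the claim that induction is exact and carries projective resolutions to projective resolutions requires a separate argument. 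When in addition $S'<S$ (so $\mathcal{C}'\subsetneq\mathcal{C}$) the algebra inclusion is not even unital, compounding the issue. The overall shape of your argument — adjunction plus preservation of projective resolutions — is the right one, but the structural input that makes it work is exactly the nontrivial content, and the paper does not reprove it: it is discharged by the citation to \cite[Propositions 3.7 and 4.5]{HigherLimitsOverTheFusionOrbitCategory}, which supply the needed comparison of $\Ext$ groups between a fusion orbit category and a (possibly non-full) fusion orbit subcategory. As written, your proposal would only be valid if $\F'$ and $\F$ had the same morphism sets on $\mathcal{C}'$, which trivializes the lemma.
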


\begin{proof}
This is an immediate consequence of \cite[Corollary 5.2]{WebbRepresentationAndCohomologyOfCategories}
and \cite[Propositions 3.7 and 4.5]{HigherLimitsOverTheFusionOrbitCategory}.
\end{proof}
We conclude this section with the following notation on stable elements.
\begin{defn}
\label{def:MF}Let $S$ be a $p$-group, let $\F$ be a fusion system
over $S$, let $\mathcal{C}$ be a collection of subgroups of $S$,
let $\R$ be a commutative ring and let $M\in\R\OFC\mod{}$ (resp.
$M\in\R\OFC\Mod$). We define $M^{\F}:=\lim_{\OFC}M$.
\end{defn}

\begin{lem}
\label{lem:MF-as-subgroup-of-MS.}Let $S$ be a finite $p$-group,
let $\F$ be a fusion system over $S$, let $\mathcal{C}$ be a collection
of subgroups of $S$, let $\R$ be a commutative ring and let $M\in\R\OF\mod{}$
(resp. $\R\OF\Mod$). If $S\in\mathcal{C}$ we can view $M^{\F}$
as an $\R$-submodule of $M\left(S\right)$ via the following isomorphism.
\begin{equation}
M^{\F}\cong\left\{ x\in M\left(S\right)\,:\,\forall P\le S\text{ and }\forall\overline{\varphi},\overline{\psi}\in\Hom_{\OF}\left(P,S\right)\text{ then }M\left(\overline{\varphi}\right)\left(x\right)=M\left(\overline{\psi}\right)\left(x\right)\right\} .\label{eq:description-limits.}
\end{equation}
In particular for every $N\in\R\OFC[\FS]\mod{}$ (resp. $\R\OFC[\FS]\Mod$)
we have that $N^{\FS}\cong N\left(S\right)$.
\end{lem}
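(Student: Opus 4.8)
The plan is to use that, since $S\in\mathcal{C}$, the object $S$ is weakly terminal in $\OFC$: for every object $P$ the set $\Hom_{\OFC}(P,S)$ is non-empty, as it contains the class $\overline{\iota_{P}^{S}}$ of the inclusion. Hence the limit over $\OFC$ is detected on the value at $S$. Concretely, $M^{\F}=\lim_{\OFC}(M\downarrow_{\OFC}^{\OF})$ is the $\R$-submodule of $\prod_{P\in\mathcal{C}}M(P)$ consisting of the compatible families $(x_{P})_{P}$, i.e.\ those with $M(\overline{\varphi})(x_{Q})=x_{P}$ for all $P,Q\in\mathcal{C}$ and all $\overline{\varphi}\in\Hom_{\OFC}(P,Q)$, and I would study the evaluation map $\pi:M^{\F}\to M(S)$ sending $(x_{P})_{P}$ to $x_{S}$.

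First I would check that $\pi$ is an injective homomorphism of $\R$-modules: applying compatibility to $\overline{\iota_{P}^{S}}\in\Hom_{\OFC}(P,S)$ gives $x_{P}=M(\overline{\iota_{P}^{S}})(x_{S})$ for every $P\in\mathcal{C}$, so the whole family is recovered from $x_{S}$. Then I would identify $\im(\pi)$ with the subset displayed in (\ref{eq:description-limits.}). One inclusion is immediate from compatibility: for any $P\in\mathcal{C}$ and any $\overline{\varphi},\overline{\psi}\in\Hom_{\OFC}(P,S)=\Hom_{\OF}(P,S)$ one has $M(\overline{\varphi})(x_{S})=x_{P}=M(\overline{\psi})(x_{S})$. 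For the converse, starting from an element $x$ of the displayed set I would set $x_{P}:=M(\overline{\iota_{P}^{S}})(x)$ for $P\in\mathcal{C}$; the stability hypothesis makes this independent of the auxiliary choice of inclusion, and for $\overline{\varphi}\in\Hom_{\OFC}(P,Q)$ it gives $M(\overline{\varphi})(x_{Q})=M(\overline{\iota_{Q}^{S}}\,\overline{\varphi})(x)=M(\overline{\iota_{P}^{S}})(x)=x_{P}$, since $\overline{\iota_{Q}^{S}}\,\overline{\varphi}$ and $\overline{\iota_{P}^{S}}$ are parallel morphisms from $P$ to $S$. Thus $(x_{P})_{P}$ is a compatible family whose image under $\pi$ is $x$. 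The displayed subset is an $\R$-submodule of $M(S)$ because it is the intersection of the kernels of the $\R$-linear maps $M(\overline{\varphi})-M(\overline{\psi}):M(S)\to M(P)$; the argument is identical for $M\in\R\OF\Mod$.

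For the final assertion I would specialise to $\F=\FS$: there $\Hom_{\FS}(P,S)=\Hom_{S}(P,S)$, and since $N_{S}(P,S)=S$ the left-composition action of $\Aut_{S}(S)$ is transitive on this set, so $\Hom_{\OFC[\FS]}(P,S)$ is a singleton, $S$ is terminal in $\OFC[\FS]$, and the stability condition in (\ref{eq:description-limits.}) becomes vacuous; hence $\pi$ is an isomorphism $N^{\FS}\cong N(S)$. The step I expect to be the main obstacle is the description of $\im(\pi)$: one has to verify that $x_{P}:=M(\overline{\iota_{P}^{S}})(x)$ is well defined, since a priori $\overline{\iota_{P}^{S}}$ is only one of possibly several morphisms $P\to S$, and that the stability condition is imposed over exactly the collection of subgroups and pairs of morphisms to $S$ needed to rebuild a compatible family from $x_{S}$.
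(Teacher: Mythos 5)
Your proof is correct and takes essentially the same approach as the paper, which is very terse and simply invokes the weak terminality of $S$ in $\OFC$ together with the universal property of limits, and then observes that $S$ becomes an honest terminal object (initial in $\OF[\FS]^{\text{op}}$) when $\F=\FS$. You have spelled out exactly the content hidden in "the universal property of limits": injectivity of evaluation at $S$ via $\overline{\iota_P^S}$, and the identification of the image with the set of stable elements by checking that $x_P:=M(\overline{\iota_P^S})(x)$ defines a compatible cone, using that $\overline{\iota_Q^S}\,\overline{\varphi}$ and $\overline{\iota_P^S}$ are parallel morphisms $P\to S$.
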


\begin{proof}
For every $P\in\mathcal{C}$ we have that $P\le S$ and, therefore,
$\overline{\iota_{P}^{Q}}\in\Hom_{\OF}\left(P,S\right)$. In particular
$\Hom_{\OFC}\left(P,S\right)\not=\emptyset$. The first part of the
statement now follows from the universal property of limits. The
second part of the statement follows from the first after noticing
that $S$ is in fact an initial object on $\OF[\FS]^{\text{op}}$.
\end{proof}

\section{\protect\label{sec:Dwyer-spaces}Induced Dwyer spaces.}

Let $G$ be a finite group and let $\overline{\mathcal{C}}$ be a
family of subgroups of $G$ closed under $G$ conjugation. In \cite{DwyerHomologyDecomposition}
Dwyer defines the topological space $X_{\overline{\mathcal{C}}}^{\delta}$
which is now known as the Dwyer space for the centralizer decomposition
of $G$. In \cite{HigherLimitsOverTheFusionOrbitCategory} Yalçin
works with a generalization of this space in the case of $G$ being
any discrete group. Doing so he proves that $X_{\overline{\mathcal{C}}}^{\delta}$
is homeomorphic, as a $G$ topological space, to the realization of
the poset category whose objects are the groups in $\overline{\mathcal{C}}$
and whose order is given by the natural inclusion (see \cite[Lemma 6.10]{HigherLimitsOverTheFusionOrbitCategory}).
Using this and the fact that the spaces $C_{G}\left(P\right)\backslash\left(X_{\overline{\mathcal{C}}}^{\delta}\right)^{P}$
are contractible for every $P\in\overline{\mathcal{C}}$ (see \cite[Proposition 6.12]{HigherLimitsOverTheFusionOrbitCategory})
he relates the homology groups of the spaces $C_{G}\left(P\right)\backslash\left(X_{\overline{\mathcal{C}}}^{\delta}\right)^{P}$
with higher limits over the category $\overline{\F}_{\overline{\mathcal{C}}}\left(G\right)$
whose objects are the groups in $\overline{\mathcal{C}}$ and whose
morphism sets are given by $\Hom_{\overline{\F}_{\overline{\mathcal{C}}}\left(G\right)}\left(P,Q\right):=Q\backslash\Hom_{G}\left(P,Q\right)$.

Our goal in this document is similar in nature, however, instead of
studying the higher limits of $\overline{\F}_{\overline{\mathcal{C}}}\left(G\right)$
directly we want to do it via higher limits over categories of the
form $\overline{\F}_{\overline{\mathcal{C}}_{H}}\left(H\right)$ for
some $H\le G$ and $\overline{\mathcal{C}}_{H}:=\left\{ P\in\overline{\mathcal{C}}\,:\,P\le H\right\} $.

To this end, for every $H\le G$ and every family $\mathcal{C}$ of
subgroups of $G$ closed under $G$ conjugation, we define the $G$
simplicial set $\NCGH{G}{H}{\overline{C}}$ (see Definition \ref{def:NCGH}).
This simplicial set is similar in nature to the nerve of the poset
category whose objects are the groups in $\overline{\mathcal{C}}$
and whose order is given by natural inclusion. This allows us to obtain
for $\NCGH{G}{H}{\overline{C}}$ results similar to those Yalçin obtains
for $X_{\overline{\mathcal{C}}}^{\delta}$. More precisely we prove
that for every $P\in\overline{\mathcal{C}}$ the induced simplicial
set $C_{G}\left(P\right)\backslash\left(\NCGH{G}{H}{\overline{C}}\right)^{P}$
has the same homology groups as a discrete set of points (see Lemma
\ref{lem:fusion-weak-equivalent-to-discrete-points.}). Using this
we are able to relate the homology groups of the spaces $C_{G}\left(P\right)\backslash\left(\NCGH{G}{H}{\overline{C}}\right)^{P}$
with the $\R\OFC[\FSG]$-module $\underline{\R}_{\OFD{\FAB{H\cap S}{H}}{\overline{\mathcal{C}}_{H\cap S}}}^{\OFC[\FSG]}$
(see Definition \ref{def:Constant-functor.}) for some finite $p$-group
$S\le G$, some family $\overline{\mathcal{C}}$ of subgroups of $S$
closed under $\FSG$ conjugation and some commutative ring $\R$.
In Section \ref{sec:main-section} we use this relation and Lemma
\ref{lem:lims-and-ext-groups.} in order to study higher limits over
$\OFD{\FAB{H\cap S}{H}}{\overline{\mathcal{C}}_{H\cap S}}$ and relate
them with higher limits over $\OFC[\FSG]$. Is this relation that
leads to the proof of Theorem \hyperref[thm:A]{A}.

Let us start by defining the $G$ simplicial sets $\NCGH{G}{H}{\overline{C}}$.
\begin{defn}
\label{def:PCGH.}Let $G$ be a discrete group, let $H\le G$ and
let $\overline{\mathcal{C}}$ be a family of subgroups of $G$ closed
under $G$ conjugation. We denote by $\PCGH{G}{H}{\overline{C}}$
the poset category whose objects are tuples of the form $\left(Hx,P\right)$
with $P\in\overline{\mathcal{C}}$ and $x\in N_{G}\left(P,H\right)$
and whose order is defined by setting $\left(Hx,P\right)\preceq\left(Hy,Q\right)$
if and only if $Hx=Hy$ and $P\le Q$. This category admits a compatible
$G$ action defined by setting $g\cdot\left(Hx,P\right):=\left(Hxg^{-1},\lui{g}{P}\right)$
for every $g\in G$ and every $\left(Hx,P\right)\in\text{Ob}\left(\PCGH{G}{H}{\overline{C}}\right)$.
\end{defn}

\begin{rem}
If $H=G$ the poset category $\PCGH{G}{H}{\overline{C}}$ is isomorphic
to the poset category whose objects are the groups in $\overline{\mathcal{C}}$
whose order is given by natural inclusion.
\end{rem}

\begin{defn}
\label{def:NCGH}With notation as in Definition \ref{def:PCGH.} we
define the simplicial set $\NCGH{G}{H}{\overline{C}}$ as the nerve
of $\PCGH{G}{H}{\overline{C}}$. This simplicial set inherits from
$\PCGH{G}{H}{\overline{C}}$ a compatible $G$ action that makes it
into a $G$ simplicial set.
\end{defn}

Let $P\in\overline{\mathcal{C}}$, the fixed points set $\left(\NCGH{G}{H}{\overline{C}}\right)^{P}$
of the $G$ simplicial set $\NCGH{G}{H}{\overline{C}}$ under the
action of $P$ is quite close to being contractible. More precisely
it is weak equivalent to a discrete collection of points as proven
by the following lemmas.
\begin{lem}
\label{lem:first-weak-equivalence.}Let $G$ be a discrete group,
let $H\le G$, let $\overline{\mathcal{C}}$ be any family of subgroups
of $G$ closed under $G$ conjugation and let $P\in\overline{\mathcal{C}}$.
There is an isomorphism of $N_{G}\left(P\right)$ simplicial sets
between the simplicial subset $\left(\NCGH{G}{H}{\overline{C}}\right)^{P}\subseteq\NCGH{G}{H}{\overline{C}}$
of points fixed under the action of $P$ and the nerve of the full
subcategory $\mathcal{P}$ of $\PCGH{G}{H}{\overline{C}}$ having
as objects tuples of the form $\left(Hx,Q\right)$ with $P\le N_{G}\left(Q\right)$
and $x\in N_{G}\left(PQ,H\right)$.
\end{lem}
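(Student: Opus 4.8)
The plan is to unwind both sides to concrete posets and exhibit a bijection on objects that is automatically an isomorphism of posets, then check $G$-equivariance restricts to $N_G(P)$-equivariance. First I would recall that since the $G$-action on $\PCGH{G}{H}{\overline{C}}$ (and hence on its nerve) is by functors, taking $P$-fixed points commutes with the nerve construction: a simplex of $\NCGH{G}{H}{\overline{C}}$ is a chain $(Hx_0,Q_0)\preceq\cdots\preceq(Hx_n,Q_n)$, and it is fixed by every element of $P$ precisely when each vertex $(Hx_i,Q_i)$ is fixed by $P$. Here I use that all the cosets $Hx_i$ agree along a chain (by definition of $\preceq$), so the $P$-action $g\cdot(Hx,Q)=(Hxg^{-1},{}^{g}Q)$ fixes the whole chain iff it fixes each vertex. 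Thus $\left(\NCGH{G}{H}{\overline{C}}\right)^{P}$ is the nerve of the full subposet of $P$-fixed objects, and it remains to identify that subposet.

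Next I would compute the fixed-object condition. For $g\in P$, $g\cdot(Hx,Q)=(Hxg^{-1},{}^{g}Q)$ equals $(Hx,Q)$ iff ${}^{g}Q=Q$ and $Hxg^{-1}=Hx$, i.e. $xg^{-1}x^{-1}\in H$, i.e. $xgx^{-1}\in H$. Requiring this for all $g\in P$ gives exactly $P\le N_G(Q)$ together with $x\in N_G(P,H)$, which combined with the standing requirement $x\in N_G(Q,H)$ (from $(Hx,Q)$ being an object of $\PCGH{G}{H}{\overline{C}}$) is equivalent to $x\in N_G(PQ,H)$, since ${}^{x}(PQ)={}^{x}P\cdot{}^{x}Q\le H$ iff both ${}^{x}P\le H$ and ${}^{x}Q\le H$ (and $PQ$ is a subgroup because $P$ normalizes $Q$). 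That is precisely the description of the objects of $\mathcal{P}$ in the statement, so the fixed-object poset equals $\mathcal{P}$, with the order inherited from $\PCGH{G}{H}{\overline{C}}$, which is the order in $\mathcal{P}$ as a full subcategory. Taking nerves gives the claimed isomorphism of simplicial sets.

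Finally I would verify the equivariance claim: $N_G(P)$ acts on $\PCGH{G}{H}{\overline{C}}$ by restriction of the $G$-action, and it preserves the subset of $P$-fixed objects because for $h\in N_G(P)$ and $(Hx,Q)$ a $P$-fixed object, $h\cdot(Hx,Q)$ is fixed by $h P h^{-1}=P$; hence $\mathcal{P}$ is an $N_G(P)$-subcategory and the isomorphism above is $N_G(P)$-equivariant. The only mildly delicate point — the part I would present most carefully — is the equivalence "$x\in N_G(P,H)$ and $x\in N_G(Q,H)$ $\iff$ $x\in N_G(PQ,H)$", which rests on $P\le N_G(Q)$ ensuring $PQ$ is a group with ${}^{x}(PQ)={}^{x}P\,{}^{x}Q$; everything else is a routine unwinding of definitions. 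No separate homotopy-theoretic input is needed here since the statement is an honest isomorphism, not merely a weak equivalence (the weak equivalence to a discrete set is the content of the subsequent lemma).
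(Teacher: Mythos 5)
Your proof is correct and follows essentially the same route as the paper's: identify the $P$-fixed simplices with the nerve of the full subposet of $P$-fixed objects, then unwind the fixed-object condition to $P\le N_G(Q)$ and $x\in N_G(P,H)$, and combine with the standing condition $x\in N_G(Q,H)$ to obtain $x\in N_G(PQ,H)$. The only cosmetic difference is that the paper cites \cite[Proposition 5.11]{Dwyer2001HomotopyTheorwticMethodsInGroupCohomology} for the fact that $P$-fixed points of the nerve agree with the nerve of the $P$-fixed subcategory, whereas you argue it directly from the poset structure; your extra remark that $P\le N_G(Q)$ makes $PQ$ a subgroup, so that $N_G(PQ,H)$ is even defined, is a small clarification the paper leaves implicit.
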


\begin{proof}
Let $\left(\PCGH{G}{H}{\overline{C}}\right)^{P}$ be the full subcategory
of $\PCGH{G}{H}{\overline{C}}$ whose objects are fixed under the
action of $P$. We know from \cite[Proposition 5.11]{Dwyer2001HomotopyTheorwticMethodsInGroupCohomology}
that the natural map $\varphi:\left(\NCGH{G}{H}{\overline{C}}\right)^{P}\to\mathcal{N}\left(\left(\PCGH{G}{H}{\overline{C}}\right)^{P}\right)$
is an isomorphism of simplicial sets. Here $\mathcal{N}\left(\boldsymbol{C}\right)$
denotes the nerve of a category $\boldsymbol{C}$. Since the action
of $G$ on $\left(\NCGH{G}{H}{\overline{C}}\right)^{P}$ is inherited
from the action of $G$ on $\PCGH{G}{H}{\overline{C}}$ the isomorphism
$\varphi$ is in fact an isomorphism of $N_{G}\left(P\right)$ simplicial
sets.

Since $\PCGH{G}{H}{\overline{C}}$ is a poset category then there
exists at most one morphism between two objects. Therefore $\left(\PCGH{G}{H}{\overline{C}}\right)^{P}$
is the full subcategory of $\PCGH{G}{H}{\overline{C}}$ whose objects
are the tuples $\left(Hx,Q\right)$ such that $\left(Hxy^{-1},\lui{y}{Q}\right)=\left(Hx,Q\right)$
for every $y\in P$.

The identity $Q=\lui{y}{Q}$ is satisfied for every $y\in P$ if and
only if $P\le N_{G}\left(Q\right)$.

The identity $Hxy^{-1}=Hx$ is satisfied for every $y\in P$ if and
only if $xPx^{-1}\in H$ or, equivalently, if and only if $x\in N_{G}\left(P,H\right)$.

Since $x\in N_{G}\left(Q,H\right)$ by definition of $\PCGH{G}{H}{\overline{C}}$
and $N_{G}\left(Q,H\right)\cap N_{G}\left(P,H\right)=N_{G}\left(PQ,H\right)$
we conclude that $x\in N_{G}\left(PQ,H\right)$. The result follows.
\end{proof}
We can use Lemma \ref{lem:first-weak-equivalence.} in order to further
simplify the description of $\left(\NCGH{G}{H}{\overline{C}}\right)^{P}$
as follows.
\begin{lem}
\label{lem:second-weak-equivalence}Let $G$ be a discrete group,
let $H\le G$, let $\overline{\mathcal{C}}$ be any family of subgroups
of $G$ closed under $G$ conjugation and taking products (i.e. for
every $P,Q\in\overline{\mathcal{C}}$ if $PQ$ is a subgroup of $G$
then $PQ\in\overline{\mathcal{C}}$) and let $P\in\overline{\mathcal{C}}$.
There is a weak equivalence of $N_{G}\left(P\right)$ simplicial sets
between the simplicial subset $\left(\NCGH{G}{H}{\overline{C}}\right)^{P}\subseteq\NCGH{G}{H}{\overline{C}}$
of points fixed under the action of $P$ and the nerve of the full
subcategory $\PCGHP{G}{H}{\overline{C}}{P}$ of $\PCGH{G}{H}{\overline{C}}$
whose objects tuples of the form $\left(Hx,Q\right)$ with $P\le Q$.
\end{lem}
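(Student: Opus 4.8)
The strategy is to bridge between the two descriptions via an intermediate category, namely the category $\mathcal{P}$ of Lemma \ref{lem:first-weak-equivalence.}, and to exhibit the requested weak equivalence by producing an adjunction (or at least a pair of functors with a natural transformation to the identity), so that Quillen's Theorem A, or more simply the fact that a functor with a left/right adjoint induces a homotopy equivalence on nerves, applies. Concretely, by Lemma \ref{lem:first-weak-equivalence.} we already have an isomorphism of $N_G(P)$ simplicial sets $\bigl(\NCGH{G}{H}{\overline{C}}\bigr)^P \cong \mathcal{N}(\mathcal{P})$, where $\mathcal{P}$ has objects the tuples $(Hx,Q)$ with $P \le N_G(Q)$ and $x \in N_G(PQ,H)$. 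There is an obvious inclusion of full subcategories $\PCGHP{G}{H}{\overline{C}}{P} \hookrightarrow \mathcal{P}$, since $P \le Q$ forces $P \le N_G(Q)$ and $N_G(PQ,H) = N_G(Q,H) \ni x$. So it suffices to show this inclusion induces a weak equivalence of nerves, compatibly with the $N_G(P)$ action.

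For that I would define a functor $F : \mathcal{P} \to \PCGHP{G}{H}{\overline{C}}{P}$ by $F(Hx,Q) := (Hx, PQ)$; this is where the hypothesis that $\overline{\mathcal{C}}$ is closed under taking products is used — $P \le N_G(Q)$ guarantees $PQ$ is a subgroup, and then $PQ \in \overline{\mathcal{C}}$, with $P \le PQ$ and $x \in N_G(PQ,H)$, so the target tuple is a legitimate object of $\PCGHP{G}{H}{\overline{C}}{P}$. On morphisms, if $(Hx,Q) \preceq (Hy,Q')$ then $Hx = Hy$ and $Q \le Q'$, hence $PQ \le PQ'$, so $F$ is well-defined and functorial. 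The key observation is then that for every object $(Hx,Q) \in \mathcal{P}$ there is a canonical morphism $(Hx,Q) \preceq (Hx,PQ) = \iota F(Hx,Q)$ in $\mathcal{P}$, and on the full subcategory $\PCGHP{G}{H}{\overline{C}}{P}$ we have $F\iota = \mathrm{Id}$ exactly because $P \le Q$ implies $PQ = Q$; moreover these morphisms are natural. This exhibits $\iota$ as a right adjoint to $F$ (equivalently, $F$ is left adjoint to the inclusion), so the nerves are homotopy equivalent. Since all constructions — the inclusion $\iota$, the functor $F$, and the natural transformation $\mathrm{Id}_{\mathcal{P}} \Rightarrow \iota F$ — are built using only the group operations and are therefore $N_G(P)$ equivariant (an element $g \in N_G(P)$ sends $(Hx,Q)$ to $(Hxg^{-1}, {}^g Q)$ and $P\cdot{}^g Q = {}^g(PQ)$ since $g$ centralizes or normalizes $P$), the resulting homotopy equivalence is one of $N_G(P)$ simplicial sets, as required.

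The only genuinely delicate point is checking that the natural transformation $(Hx,Q) \preceq (Hx,PQ)$ really is $N_G(P)$ equivariant: one must verify that applying $g \in N_G(P)$ to the morphism $(Hx,Q)\preceq(Hx,PQ)$ yields the morphism $(Hxg^{-1},{}^gQ)\preceq(Hxg^{-1},{}^g(PQ))$ and that this equals the component of the transformation at $g\cdot(Hx,Q)$, which reduces to the identity ${}^g(PQ) = P\cdot{}^gQ$; this holds because $g$ normalizes $P$. I expect this equivariance bookkeeping, rather than the construction of the adjunction itself, to be the main thing requiring care — everything else is a routine application of the standard fact that adjoint functors induce homotopy equivalences on nerves.
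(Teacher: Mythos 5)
Your proof follows essentially the same route as the paper's: reduce to the nerve of the intermediate category $\mathcal{P}$ via Lemma \ref{lem:first-weak-equivalence.}, define the retraction $(Hx,Q)\mapsto(Hx,PQ)$ onto $\PCGHP{G}{H}{\overline{C}}{P}$ (using closure under products), observe that composition with the inclusion is the identity on the subcategory, and invoke the natural transformation $\Id_{\mathcal{P}}\Rightarrow\iota F$ to conclude a weak equivalence of nerves — which the paper justifies by citing \cite[Proposition 5.2]{Dwyer2001HomotopyTheorwticMethodsInGroupCohomology}. Your explicit verification of $N_{G}\left(P\right)$ equivariance (via ${}^{g}\left(PQ\right)=P\cdot{}^{g}Q$ for $g\in N_{G}\left(P\right)$) is a point the paper leaves implicit and is a worthwhile addition.
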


\begin{proof}
Let $\mathcal{P}$ be as in Lemma \ref{lem:first-weak-equivalence.},
let $\mathcal{P}':=\PCGHP{G}{H}{\overline{C}}{P}$, let $F:\mathcal{P}'\hookrightarrow\mathcal{P}$
be the natural inclusion of categories and define $F':\mathcal{P}\to\mathcal{P}'$
as the functor sending every $\left(Hx,Q\right)\in\mathcal{P}$ to
$F'\left(\left(Hx,Q\right)\right)=\left(Hx,PQ\right)$.

Since $PQ=Q$ for every $P\le Q$ we have that $F'\circ F=\Id_{\mathcal{P}'}$.
On the other hand, for every $\left(Hx,Q\right)\in\mathcal{P}$ we
have that $x\in N_{G}\left(PQ,H\right)$ and, therefore, there exists
a unique morphism from $\left(Hx,Q\right)$ to $\left(Hx,PQ\right)=F\left(F'\left(\left(Hx,Q\right)\right)\right)$
in $\mathcal{P}$. We deduce that there exists a natural transformation
$\eta:\Id_{\mathcal{P}}\Rightarrow F\circ F'$ defined by setting
$\eta_{\left(Hx,Q\right)}$ to be such morphism for every $\left(Hx,Q\right)\in\mathcal{P}$.
We conclude from \cite[Proposition 5.2]{Dwyer2001HomotopyTheorwticMethodsInGroupCohomology}
that the functors $F$ and $F'$ induce a weak equivalence between
$\mathcal{N}\left(P'\right)$ and $\mathcal{N}\left(P\right)$. The
result follows from Lemma \ref{lem:first-weak-equivalence.}.
\end{proof}
We can now prove that $\left(\NCGH{G}{H}{\overline{C}}\right)^{P}$
has the same homology groups as a discrete collection of points.
\begin{lem}
\label{lem:weak-equiv-to-discrete-points.}Let $G$ be a discrete
group, let $H\le G$, let $\overline{\mathcal{C}}$ be any family
of subgroups of $G$ closed under $G$ conjugation and taking products
and let $P\in\overline{\mathcal{C}}$. There is a weak equivalence
of $N_{G}\left(P\right)$ simplicial sets between the simplicial subset
$\left(\NCGH{G}{H}{\overline{C}}\right)^{P}\subseteq\NCGH{G}{H}{\overline{C}}$
of points fixed under the action of $P$ and the nerve of the full
subcategory $\PCGHP{G}{H}{\overline{C}}{P}$ of $\PCGH{G}{H}{\overline{C}}$
having as objects tuples of the form $\left(Hx,P\right)$ for some
$x\in N_{G}\left(P,H\right)$. That is $\left(\NCGH{G}{H}{\overline{C}}\right)^{P}$
is weak equivalent as an $N_{G}\left(P\right)$ simplicial set to
the nerve of the category whose are the cosets of $H\backslash N_{G}\left(P,H\right)$
and whose only morphisms are the identity morphisms.
\end{lem}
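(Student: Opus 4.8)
The plan is to run the homotopy-collapse argument from the proof of Lemma \ref{lem:second-weak-equivalence} once more, but now collapsing each object $(Hx,Q)$ \emph{downwards} to $(Hx,P)$ instead of \emph{upwards} to $(Hx,PQ)$. By Lemma \ref{lem:second-weak-equivalence} we may replace $\left(\NCGH{G}{H}{\overline{C}}\right)^{P}$ by the nerve of the full subcategory $\mathcal{Q}\subseteq\PCGH{G}{H}{\overline{C}}$ on the objects $(Hx,Q)$ with $P\le Q$; so it suffices to produce an $N_{G}(P)$-equivariant weak equivalence between $\mathcal{N}(\mathcal{Q})$ and the nerve of the full subcategory $\PCGHP{G}{H}{\overline{C}}{P}$ of $\PCGH{G}{H}{\overline{C}}$ on the objects of the form $(Hx,P)$.

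The first step is to observe that $\PCGHP{G}{H}{\overline{C}}{P}$ is a full subcategory of $\mathcal{Q}$ and that $r\colon(Hx,Q)\mapsto(Hx,P)$ is a well-defined functor $\mathcal{Q}\to\PCGHP{G}{H}{\overline{C}}{P}$: the point is the inclusion $N_{G}(Q,H)\subseteq N_{G}(P,H)$ valid whenever $P\le Q$ (from $\lui{x}{Q}\le H$ one gets $\lui{x}{P}\le\lui{x}{Q}\le H$), which together with $P\in\overline{\mathcal{C}}$ guarantees $(Hx,P)$ is a legitimate object, while $r$ preserves the order because $(Hx,Q)\preceq(Hy,R)$ already forces $Hx=Hy$. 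Writing $\iota$ for the inclusion, one then has $r\circ\iota=\Id$, and for each $(Hx,Q)\in\mathcal{Q}$ the relation $P\le Q$ furnishes the unique morphism $(Hx,P)\to(Hx,Q)$; since $\mathcal{Q}$ is a poset these automatically assemble into a natural transformation $\iota\circ r\Rightarrow\Id_{\mathcal{Q}}$, so \cite[Proposition 5.2]{Dwyer2001HomotopyTheorwticMethodsInGroupCohomology} gives that $\mathcal{N}(\PCGHP{G}{H}{\overline{C}}{P})$ and $\mathcal{N}(\mathcal{Q})$ are homotopy equivalent. Finally, $\PCGHP{G}{H}{\overline{C}}{P}$ is a discrete category: a morphism $(Hx,P)\to(Hy,P)$ exists only when $Hx=Hy$, i.e.\ only identities occur, so its nerve is the discrete simplicial set on $\{(Hx,P):x\in N_{G}(P,H)\}\cong H\backslash N_{G}(P,H)$.

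The one point that is not purely formal is the $N_{G}(P)$-equivariance, which is exactly what promotes the homotopy equivalence of nerves to an equivalence of $N_{G}(P)$-simplicial sets; I expect this to be the only real (and still minor) obstacle. Here one uses that $g\cdot P=P$ for $g\in N_{G}(P)$: this makes $r$ equivariant, since $r(g\cdot(Hx,Q))=(Hxg^{-1},P)=g\cdot r(Hx,Q)$; it makes the natural transformation $\iota\circ r\Rightarrow\Id$ equivariant for the same reason; and it makes $g\cdot Hx:=Hxg^{-1}$ a well-defined $N_{G}(P)$-action on $H\backslash N_{G}(P,H)$ compatible with the bijection above (if $\lui{x}{P}\le H$ then $\lui{xg^{-1}}{P}=\lui{x}{P}\le H$). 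Granting the equivariance, composing with Lemma \ref{lem:second-weak-equivalence} yields the asserted weak equivalence, and the identification of $\PCGHP{G}{H}{\overline{C}}{P}$ with the nerve of the category with object set $H\backslash N_{G}(P,H)$ and only identity morphisms completes the proof.
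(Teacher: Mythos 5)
Your proposal is correct and follows essentially the same path as the paper: first pass through Lemma \ref{lem:second-weak-equivalence} to the subcategory on objects $(Hx,Q)$ with $P\le Q$, then collapse downward to $(Hx,P)$ via the retraction $r$ (the paper's $F'$), exhibit the natural transformation $\iota\circ r\Rightarrow\Id$ from the unique morphism $(Hx,P)\to(Hx,Q)$, and invoke Dwyer's Proposition 5.2. Your explicit check of $N_{G}(P)$-equivariance is slightly more careful than the paper, which leaves that step implicit.
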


\begin{proof}
The proof is similar to that of Lemma \ref{lem:second-weak-equivalence}.
Let $\mathcal{P}':=\PCGH{G}{H}{\overline{C}}$, let $\mathcal{P}:=\PCGHP{G}{H}{\overline{C}}{P}$,
let $F:\mathcal{P}\hookrightarrow\mathcal{P}'$ be the natural inclusion
of categories and define $F':\mathcal{P}'\to\mathcal{P}$ as the functor
sending every $\left(Hx,Q\right)\in\mathcal{P}'$ to $F'\left(\left(Hx,Q\right)\right)=\left(Hx,P\right)$.

It is immediate from definition that $F'\circ F=\Id_{\mathcal{P}}$.
On the other hand, for every $\left(Hx,Q\right)\in\mathcal{P}$ we
have that $P\le Q$ and, therefore, $x\in N_{G}\left(Q,H\right)\subseteq N_{G}\left(P,H\right)$.
We deduce that for every $\left(Hx,Q\right)\in\mathcal{P}$ there
exists a unique morphism from $F\left(F'\left(\left(Hx,Q\right)\right)\right)=\left(Hx,P\right)$
to $\left(Hx,Q\right)$. We therefore obtain a natural transformation
$\eta:F\circ F'\Rightarrow\Id_{\mathcal{P}'}$ defined by taking $\eta_{\left(Hx,Q\right)}$
to be such morphism for every $\left(Hx,Q\right)\in\mathcal{P}'$.
We conclude from \cite[Proposition 5.2]{Dwyer2001HomotopyTheorwticMethodsInGroupCohomology}
that the functors $F$ and $F'$ induce a weak equivalence between
$\mathcal{N}\left(P'\right)$ and $\mathcal{N}\left(P\right)$. The
result now follows from Lemma \ref{lem:second-weak-equivalence}.
\end{proof}
Since we aim to study fusion systems induced by a group $G$ we are
mainly interested in the $N_{G}\left(P\right)$ simplicial sets on
which $C_{G}\left(P\right)$ and $P$ act trivially since, in these
cases, they can be viewed as $\Out_{G}\left(P\right)$ simplicial
sets. More precisely we are interested in the $\Out_{G}\left(P\right)$
simplicial set $C_{G}\left(P\right)\backslash\left(\NCGH{G}{H}{\overline{C}}\right)^{P}$.
\begin{lem}
\label{lem:fusion-weak-equivalent-to-discrete-points.}Let $G$ be
a discrete group, let $H\le G$, let $\overline{\mathcal{C}}$ be
any family of subgroups of $G$ closed under $G$ conjugation and
taking products and let $P\in\overline{\mathcal{C}}$. The $\Out_{G}\left(P\right)$
simplicial set $C_{G}\left(P\right)\backslash\left(\NCGH{G}{H}{\overline{C}}\right)^{P}$
is weak equivalent to the nerve of the category whose objects are
the equivalence classes of the quotient set $H\backslash\Hom_{G}\left(P,H\right)$
and whose only morphisms are the identity morphisms.
\end{lem}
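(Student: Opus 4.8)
The plan is to deduce Lemma~\ref{lem:fusion-weak-equivalent-to-discrete-points.} from Lemma~\ref{lem:weak-equiv-to-discrete-points.} by quotienting out the action of $C_{G}\left(P\right)$ and keeping track of how this quotient interacts with the weak equivalence established there. First I would recall that, by Lemma~\ref{lem:weak-equiv-to-discrete-points.}, the $N_{G}\left(P\right)$ simplicial set $\left(\NCGH{G}{H}{\overline{C}}\right)^{P}$ is weak equivalent to the nerve of the discrete category on the set $H\backslash N_{G}\left(P,H\right)$ of cosets, with $N_{G}\left(P\right)$ acting by right multiplication on this coset space (this action being the one inherited from Definition~\ref{def:PCGH.}, namely $g\cdot Hx = Hxg^{-1}$). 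Since $C_{G}\left(P\right)\le N_{G}\left(P\right)$ and $C_{G}\left(P\right)$ acts freely on each side, the key point is that the weak equivalence of Lemma~\ref{lem:weak-equiv-to-discrete-points.} can be chosen $N_{G}\left(P\right)$-equivariantly, and quotienting a weak equivalence of $N_{G}\left(P\right)$ simplicial sets by the (free) action of the normal subgroup $C_{G}\left(P\right)$ again yields a weak equivalence — here I would invoke the fact that $C_{G}\left(P\right)$ acts freely on both simplicial sets (on the discrete one this is the freeness of right translation on cosets modulo $H$, after checking $C_{G}\left(P\right)\cap$ (relevant stabilizer) is trivial).

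Next I would identify the quotient explicitly. On the discrete side, $C_{G}\left(P\right)\backslash\left(H\backslash N_{G}\left(P,H\right)\right)$ is the set of double-coset-like classes $H x C_{G}\left(P\right)$ with $x\in N_{G}\left(P,H\right)$. The claim is that this set is in natural bijection with $H\backslash\Hom_{G}\left(P,H\right)$: the map sends the class of $x$ to the $H$-orbit of the conjugation homomorphism $c_{x}:P\to H$, $y\mapsto xyx^{-1}$. This is well-defined precisely because $x\in N_{G}\left(P,H\right)$ guarantees $\lui{x}{P}\le H$, because changing $x$ on the left by $h\in H$ replaces $c_{x}$ by $c_{h}\circ c_{x}$ (the same $H$-orbit), and because changing $x$ on the right by $z\in C_{G}\left(P\right)$ does not change $c_{x}$ at all since $xz y z^{-1}x^{-1}=xyx^{-1}$ for all $y\in P$. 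Injectivity: if $c_{x}$ and $c_{x'}$ lie in the same $H$-orbit then $c_{x'}=c_{h}c_{x}$ for some $h\in H$, so $h^{-1}x'x^{-1}$ centralizes $P$, giving $x'\in H x C_{G}\left(P\right)$. Surjectivity is immediate from the definition of $\Hom_{G}\left(P,H\right)$ in Notation~\ref{nota:initial-notation.}. Finally I would note that this bijection is compatible with the residual $\Out_{G}\left(P\right)=N_{G}\left(P\right)/\left(C_{G}\left(P\right)P\right)$ actions on both sides (the action of $P$ on the coset space being trivial after the $C_{G}\left(P\right)$-quotient because $P\le C_{G}\left(P\right)\cdot(\text{stabilizers})$, or more directly because $c_{x}$ is unchanged by right multiplication of $x$ by elements inducing inner automorphisms that get absorbed), so the induced weak equivalence is one of $\Out_{G}\left(P\right)$ simplicial sets.

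The main obstacle I anticipate is the equivariance bookkeeping: Lemma~\ref{lem:weak-equiv-to-discrete-points.} is proved via the adjunction/natural-transformation criterion \cite[Proposition 5.2]{Dwyer2001HomotopyTheorwticMethodsInGroupCohomology}, and one must check that the functors $F,F'$ and the natural transformation $\eta$ appearing there are $N_{G}\left(P\right)$-equivariant so that passing to $C_{G}\left(P\right)$-orbits is legitimate and preserves the weak equivalence; concretely one needs that taking $C_{G}\left(P\right)$-orbits commutes with taking nerves and that a $C_{G}\left(P\right)$-equivariant weak equivalence between (levelwise) free $C_{G}\left(P\right)$-simplicial sets descends to a weak equivalence of quotients, which follows from a standard Borel-construction / covering-space argument. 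The remaining verifications — freeness of the $C_{G}\left(P\right)$-action and the bijection with $H\backslash\Hom_{G}\left(P,H\right)$ together with its $\Out_{G}\left(P\right)$-equivariance — are the routine group-theoretic computations sketched above, and I would present them briefly rather than in full detail.
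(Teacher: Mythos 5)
Your approach is the same as the paper's: start from Lemma~\ref{lem:weak-equiv-to-discrete-points.}, quotient by $C_{G}\left(P\right)$, and identify $H\backslash N_{G}\left(P,H\right)/C_{G}\left(P\right)$ with $H\backslash\Hom_{G}\left(P,H\right)$. The explicit verification of that bijection via $Hx C_{G}\left(P\right)\mapsto\overline{c_{x}}$ — well-definedness, injectivity, surjectivity, and compatibility with the $\Out_{G}\left(P\right)$-action — is exactly what the paper's terse ``noticing the isomorphism'' elides, and you have it right.

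However, the justification you give for descending the weak equivalence through the $C_{G}\left(P\right)$-quotient is flawed: $C_{G}\left(P\right)$ does \emph{not} act freely on the discrete category of objects $\left(Hx,P\right)$. Indeed, for any $z\in Z\left(P\right)$ and any $x\in N_{G}\left(P,H\right)$ one has $xz^{-1}x^{-1}\in Z\left(\lui{x}{P}\right)\le\lui{x}{P}\le H$, so $Hxz^{-1}=Hx$; hence all of $Z\left(P\right)$ lies in every stabilizer. (Compare Lemma~\ref{lem:Classifying=000020space.}, where the free action the paper actually exploits is that of $C_{G}\left(P\right)/Z\left(P\right)$, and only under the $p$-centricity hypothesis.) So the Borel-construction/covering-space route you sketch does not apply as stated. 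Fortunately it is also unnecessary: the weak equivalence of Lemma~\ref{lem:weak-equiv-to-discrete-points.} is produced by $N_{G}\left(P\right)$-equivariant functors $F,F'$ satisfying $F'\circ F=\Id$ together with an $N_{G}\left(P\right)$-equivariant natural transformation $\eta:F\circ F'\Rightarrow\Id$; since $\eta$ corresponds to an equivariant simplicial homotopy $\mathcal{N}\left(\mathcal{P}'\right)\times\Delta^{1}\to\mathcal{N}\left(\mathcal{P}'\right)$ with trivial action on $\Delta^{1}$, it passes to the $C_{G}\left(P\right)$-orbit simplicial sets for free, giving a simplicial homotopy equivalence of the quotients with no freeness hypothesis at all. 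Replacing your freeness appeal with this observation closes the gap and leaves you with essentially the paper's proof, written out in full.
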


\begin{proof}
From Lemma we know that $\left(\NCGH{G}{H}{\overline{C}}\right)^{P}$
is weak equivalent to the nerve of the $N_{G}\left(P\right)$ category
whose objects are indexed by the cosets $H\backslash N_{G}\left(P,H\right)$
and whose only morphisms are the identity morphisms. The result follows
after taking the quotient of this category by the action of $C_{G}\left(P\right)$
and noticing the isomorphism of $\Out_{G}\left(P\right)$ sets $H\backslash\Hom_{G}\left(P,H\right)\cong H\backslash N_{G}\left(P,H\right)/C_{G}\left(P\right)$.
\end{proof}
We can now use the $G$ action on $\NCGH{G}{H}{\overline{C}}$ in
order to define the following
\begin{defn}
\label{def:NNCGH}Let $S$ be a finite $p$-group, let $\F$ be a
(non necessarily saturated) fusion system over $S$, let $G$ be a
group realizing $\F$ (see \cite{LearyStancuRealisingFusionSystems,RobinsonAmalgams}),
let $H\le G$, let $\overline{\mathcal{C}}$ be any family of subgroups
of $G$ closed under $G$ conjugation and let $\mathcal{C}:=\left\{ P\in\overline{\mathcal{C}}\,:\,P\le S\right\} $.
We define $\NNCGH{G}{H}{C}:\OFC^{\text{op}}\to\SSets$ as the functor
sending every $P\in\mathcal{C}$ to the $\Out_{\F}\left(P\right)$
simplicial set $C_{G}\left(P\right)\backslash\left(\NCGH{G}{H}{\overline{C}}\right)^{P}$
and sending every morphism $c_{x}$ to the morphism naturally arising
from the action of $x$ on $\NCGH{G}{H}{\overline{C}}$.
\end{defn}

\begin{defn}
\label{def:Homology-funct-to-top.}Let $\R$ be a commutative ring,
let $n\ge0$ be an integer and let $G,H,\F$ and $\mathcal{C}$ be
as in Definition \ref{def:NNCGH}. We define the functor $\HGnHR{n}{\R}{H}:\OFC^{\text{op}}\to\R\text{-Mod}$
as the composition $\HGnHR{n}{\R}{H}:=H_{n}\left(-;\R\right)\circ\NNCGH{G}{H}{C}$.
\end{defn}

Using the descriptions of the $\Out_{G}\left(P\right)$ simplicial
sets $C_{G}\left(P\right)\backslash\left(\NCGH{G}{H}{C}\right)^{P}$
we can now provide the following description of the functors $\HGnHR{n}{\R}{H}$.
\begin{prop}
\label{prop:Hn-and-const-functor.}Let $\R$ be a commutative ring,
let $n\ge0$ be an integer, let $S$ be a finite $p$-group, let $\F$
be a (non necessarily saturated) fusion system over $S$, let $G$
be a group realizing $\F$, let $H\le G$, let $\overline{\mathcal{C}}$
be any family of subgroups of $G$ closed under $G$ conjugation and
taking products, let $\mathcal{C}:=\left\{ P\in\overline{\mathcal{C}}\,:\,P\le S\right\} $
and let $\mathcal{C}_{H}:=\left\{ P\in\overline{\mathcal{C}}\,:\,P\le S\cap H\right\} $.
Assume that for every $P\le H$ in $\overline{\mathcal{C}}$ there
exists $x\in H$ such that $\lui{x}{P}\in\mathcal{C}_{H}$. Then the
functor $\HGnHR{n}{\R}{H}$ equals $0$ for $n\ge1$ and is naturally
isomorphic to the functor $\underline{\R}_{\OFD{\FAB{H\cap S}{H}}{\mathcal{C}_{H}}}^{\OFC}:\OFC^{\text{op}}\to\R\text{-Mod}$
(see Definition \ref{def:Constant-functor.}) for $n=0$.
\end{prop}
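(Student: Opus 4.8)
The plan is to evaluate the functor $\HGnHR{n}{\R}{H}$ object‑wise using the weak equivalence established in Lemma~\ref{lem:fusion-weak-equivalent-to-discrete-points.}, and then identify the resulting object‑wise description with the induced constant module $\underline{\R}_{\OFD{\FAB{H\cap S}{H}}{\mathcal{C}_{H}}}^{\OFC}$ by unwinding the definition of induction together with the description of $\R\OFC$‑modules as contravariant functors (Proposition~\ref{prop:equivalence-of-categories.}).

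First, fix $P\in\mathcal{C}$. By Definition~\ref{def:Homology-funct-to-top.} the value $\HGnHR{n}{\R}{H}(P)$ is $H_n\!\left(C_G(P)\backslash\left(\NCGH{G}{H}{\overline{C}}\right)^P;\R\right)$. By Lemma~\ref{lem:fusion-weak-equivalent-to-discrete-points.} the simplicial set $C_G(P)\backslash\left(\NCGH{G}{H}{\overline{C}}\right)^P$ is weak equivalent (as an $\Out_G(P)$ simplicial set) to the nerve of a discrete category indexed by the set $H\backslash\Hom_G(P,H)$. Since the nerve of a discrete category is a disjoint union of points, its $n$‑th homology vanishes for $n\ge 1$ and is the free $\R$‑module on the set $H\backslash\Hom_G(P,H)$ for $n=0$; weak equivalences preserve homology, so the same holds for $\HGnHR{n}{\R}{H}(P)$. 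This already gives the vanishing statement for $n\ge 1$, and for $n=0$ it identifies the underlying $\R$‑module of $\HGnHR{0}{\R}{H}(P)$ with $\R\left[H\backslash\Hom_G(P,H)\right]$. One must also check that this identification is natural in $P$: a morphism $\overline{c_x}\in\Hom_{\OFC}(P,Q)$ acts, by Definition~\ref{def:NNCGH}, via the $G$‑action of $x$ on $\NCGH{G}{H}{\overline{C}}$, and on the level of the discrete models this is precisely precomposition $\Hom_G(Q,H)\to\Hom_G(P,H)$, $\varphi\mapsto\varphi\circ c_x$, descending to $H$‑orbits — so $\HGnHR{0}{\R}{H}$ is naturally isomorphic to the contravariant functor $P\mapsto\R\left[H\backslash\Hom_G(P,H)\right]$.

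It then remains to identify this functor with $\underline{\R}_{\OFD{\FAB{H\cap S}{H}}{\mathcal{C}_{H}}}^{\OFC}$. By Definition~\ref{def:Constant-functor.} the latter is $\underline{\R}^{\OFD{\FAB{H\cap S}{H}}{\mathcal{C}_{H}}}\otimes_{\R\OFD{\FAB{H\cap S}{H}}{\mathcal{C}_{H}}}\R\OFC$, whose value at $P$ is computed from the $\R$‑free module on morphisms of $\OFC$ out of the objects of $\mathcal{C}_H$ into $P$, modulo the relations coming from the $\OFD{\FAB{H\cap S}{H}}{\mathcal{C}_{H}}$‑action; a standard computation (cf.\ the descriptions of induced modules over category algebras) shows this value is the free $\R$‑module on the set of $\Hom_{\OFC}(-,P)$‑orbits of morphisms starting in $\mathcal{C}_H$, which unwinds to $\R\left[H\backslash\Hom_G(R,P)\right]$ for a fixed representative $R$ — and here the hypothesis that every $P\le H$ in $\overline{\mathcal{C}}$ is $H$‑conjugate into $\mathcal{C}_H$ is exactly what guarantees that the relevant orbit set is in bijection with $H\backslash\Hom_G(P,H)$ (it ensures $\mathcal{C}_H$ is "cofinal" inside $H$ in the appropriate sense, so no information is lost by restricting to $\mathcal{C}_H$). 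Matching the two functorialities finishes the proof.

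The main obstacle I expect is the bookkeeping in this last step: carefully describing the value of the induced module $\underline{\R}_{\OFD{\FAB{H\cap S}{H}}{\mathcal{C}_{H}}}^{\OFC}$ at an object $P$ as a concrete $\R$‑module with an explicit basis, and checking that the $H$‑conjugation hypothesis makes the natural comparison map with $\R\left[H\backslash\Hom_G(P,H)\right]$ an isomorphism of functors (not merely of $\R$‑modules at each object). The homotopy‑theoretic input is already packaged in Lemma~\ref{lem:fusion-weak-equivalent-to-discrete-points.}, so the real work is this algebraic identification of two descriptions of the same functor.
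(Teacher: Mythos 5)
Your proposal follows the same two-step strategy as the paper: use Lemma~\ref{lem:fusion-weak-equivalent-to-discrete-points.} to compute $\HGnHR{n}{\R}{H}(P)$ object‑wise (giving the vanishing for $n\ge1$ and $\R\left[H\backslash\Hom_G(P,H)\right]$ for $n=0$), and then identify the latter functor with the induced constant module by unwinding the tensor product over the category algebra and using the $H$‑conjugacy hypothesis to get surjectivity of the comparison map. This is exactly what the paper does, where the identification is made explicit via the map $\Gamma_P:\Hom_{\F}\left(P,\FAB{H\cap S}{H}\right)\to H\backslash N_G(P,H)/C_G(P)$, $\overline{c_x}\mapsto HxC_G(P)$.

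However, your description of the value of the induced module $\underline{\R}_{\OFD{\FAB{H\cap S}{H}}{\mathcal{C}_{H}}}^{\OFC}$ at $P$ has the direction of morphisms reversed, and the final formula $\R\left[H\backslash\Hom_G(R,P)\right]$ ``for a fixed representative $R$'' is not right. Under the paper's conventions (contravariant functors $\leftrightarrow$ right modules over the category algebra, induction $M\uparrow = M\otimes_{\R\OFD{\FAB{H\cap S}{H}}{\mathcal{C}_{H}}}\R\OFC$), the value at $P$ is the free $\R$‑module on the set of morphisms in $\OFC$ \emph{from} $P$ \emph{to} objects $Q\in\mathcal{C}_H$, modulo post-composition by morphisms of $\OFD{\FAB{H\cap S}{H}}{\mathcal{C}_{H}}$ — not morphisms out of $\mathcal{C}_H$ into $P$ as you wrote. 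This quotient set is exactly what the paper denotes $\Hom_{\F}\left(P,\FAB{H\cap S}{H}\right)$, and the comparison map to $H\backslash\Hom_G(P,H)\cong H\backslash N_G(P,H)/C_G(P)$ is injective by construction and surjective precisely because of the $H$‑conjugacy hypothesis. With the direction corrected, your argument goes through and coincides with the paper's.
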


\begin{proof}
From Lemma \ref{lem:fusion-weak-equivalent-to-discrete-points.} we
know that
\[
\HGnHR{n}{\R}{H}\left(P\right)=\begin{cases}
\R^{\left|H\backslash\Hom_{G}\left(P,H\right)\right|} & \text{if }n=0\\
0 & \text{else}
\end{cases}
\]
for every $P\in\mathcal{C}$. We can now define $\Hom_{\F}\left(P,\FAB{H\cap S}{H}\right)$
as the quotient of the Hom set $\Hom_{\F}\left(P,H\cap S\right)$
modulo the equivalence relation $\varphi\sim\psi$ if and only if
there exists $\theta\in\Hom_{\FAB{H\cap S}{H}}\left(\varphi\left(P\right),\psi\left(P\right)\right)$
such that $\theta\left(\varphi\left(x\right)\right)=\psi\left(x\right)$
for every $x\in P$. With this setup we have that $\underline{\R}_{\OFD{\FAB{H\cap S}{H}}{\mathcal{C}_{H}}}^{\OFC}\left(P\right)=\R^{\left|\Hom_{\F}\left(P,\FAB{H\cap S}{H}\right)\right|}$.

For every $P\in\mathcal{C}$ let $\Gamma_{P}:\Hom_{\F}\left(P,\FAB{H\cap S}{H}\right)\to H\backslash N_{G}\left(P,H\right)/C_{G}\left(P\right)$
be the morphism  defined by setting $\Gamma_{P}\left(\overline{c_{x}}\right)=HxC_{G}\left(P\right)$
for every $x\in N_{G}\left(P,H\cap S\right)$. It is immediate from
construction of $\Hom_{\F}\left(P,\FAB{H\cap S}{H}\right)$ that $\Gamma_{P}$
is injective. Since, by assumption, for every $x\in N_{G}\left(P,H\right)$
there exists $h\in H$ such that $hx\in N_{G}\left(P,H\cap S\right)$
we can conclude that $\Gamma_{P}$ is also surjective and, therefore,
an isomorphism. Thus $\Gamma_{P}$ leads to an isomorphism $\hat{\Gamma}_{P}:\underline{\R}_{\OFD{\FAB{H\cap S}{H}}{\mathcal{C}_{H}}}^{\OFC}\left(P\right)\bjarrow\HGnHR{0}{\R}{H}\left(P\right)$
which in turn lifts naturally to an isomorphism $\hat{\Gamma}:\underline{\R}_{\OFD{\FAB{H\cap S}{H}}{\mathcal{C}_{H}}}^{\OFC}\bjarrow\HGnHR{0}{\R}{H}$
thus concluding the proof.
\end{proof}
We conclude this section with the following result which allows us
to relate homology groups of the introduced simplicial sets with higher
limits over the fusion orbit category $\OFD{\FAB{H\cap S}{H}}{\mathcal{C}_{H}}$.
\begin{cor}
\label{cor:vanishing-ext}With notation as in Proposition \ref{prop:Hn-and-const-functor.},
for every $\R\OFC$-module $M$ and every $n\ge0$ we have that
\[
\Ext_{\R\OFC}^{n}\left(\HGnHR{t}{\R}{H},M\right)=\begin{cases}
\limn_{\OFD{\FAB{H\cap S}{H}}{\mathcal{C}_{H}}}\left(M\downarrow_{\OFD{\FAB{H\cap S}{H}}{\mathcal{C}}}^{\OFC}\right) & \text{if }t=0\\
0 & \text{else}
\end{cases}
\]
\end{cor}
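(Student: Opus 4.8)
The plan is to deduce this corollary from Proposition \ref{prop:Hn-and-const-functor.} together with Lemma \ref{lem:lims-and-ext-groups.}. Proposition \ref{prop:Hn-and-const-functor.} identifies the functor $\HGnHR{t}{\R}{H}$ completely: it vanishes for $t\ge1$, and for $t=0$ it is naturally isomorphic to the induced constant functor $\underline{\R}_{\OFD{\FAB{H\cap S}{H}}{\mathcal{C}_{H}}}^{\OFC}$. So the case $t\ge1$ is immediate: the zero functor is a zero object in $\R\OFC\Mod$, hence $\Ext_{\R\OFC}^{n}\left(0,M\right)=0$ for every $n$ and every $M$. The content is entirely in the case $t=0$.

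For $t=0$, first I would use the natural isomorphism $\HGnHR{0}{\R}{H}\cong\underline{\R}_{\OFD{\FAB{H\cap S}{H}}{\mathcal{C}_{H}}}^{\OFC}$ from Proposition \ref{prop:Hn-and-const-functor.} to replace $\HGnHR{0}{\R}{H}$ in the $\Ext$ group; since naturally isomorphic modules have naturally isomorphic $\Ext$ groups, this gives
\[
\Ext_{\R\OFC}^{n}\left(\HGnHR{0}{\R}{H},M\right)\cong\Ext_{\R\OFC}^{n}\left(\underline{\R}_{\OFD{\FAB{H\cap S}{H}}{\mathcal{C}_{H}}}^{\OFC},M\right).
\]
Now I would apply Lemma \ref{lem:lims-and-ext-groups.} with the ambient fusion system $\FSG$ over $S$, the subsystem $\FAB{H\cap S}{H}$ over $H\cap S$, and the collection $\mathcal{C}$; note that $\mathcal{C}$ is closed under $\F$-overconjugation precisely because $\overline{\mathcal{C}}$ is closed under $G$-conjugation (and a subgroup $G$-conjugate into $S$ stays in $\mathcal{C}$), and that $\mathcal{C}_{H}=\left\{P\in\mathcal{C}\,:\,P\le S\cap H\right\}$ is exactly the collection denoted $\mathcal{C}'$ in that lemma. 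The right-hand isomorphism in Lemma \ref{lem:lims-and-ext-groups.} then reads
\[
\Ext_{\R\OFD{\FAB{H\cap S}{H}}{\mathcal{C}_{H}}}^{n}\left(\underline{\R}_{\R\OFD{\FAB{H\cap S}{H}}{\mathcal{C}_{H}}}^{\R\OFC},M\right)\cong\limn_{\OFD{\FAB{H\cap S}{H}}{\mathcal{C}_{H}}}\left(M\downarrow_{\OFD{\FAB{H\cap S}{H}}{\mathcal{C}}}^{\OFC}\right),
\]
which upon unwinding the notation ($\underline{\R}_{\R\OFD{\dots}{\dots}}^{\R\OFC}$ is by Definition \ref{def:Constant-functor.} the same as $\underline{\R}_{\OFD{\dots}{\dots}}^{\OFC}$) is exactly what we want. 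Chaining the two displayed isomorphisms proves the $t=0$ case.

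The only genuine point requiring care — the ``main obstacle'' such as it is — is bookkeeping: one must check that the hypotheses of Lemma \ref{lem:lims-and-ext-groups.} are literally met, in particular that $\overline{\mathcal{C}}$ being closed under $G$-conjugation forces $\mathcal{C}$ to be closed under $\F$-overconjugation, and that the restriction collection $\mathcal{C}'$ produced inside the lemma coincides with the $\mathcal{C}_{H}$ appearing in Proposition \ref{prop:Hn-and-const-functor.}. Both are routine once spelled out. There is no homological difficulty beyond invoking the two cited results, so I would keep the write-up to a couple of sentences dispatching $t\ge1$ and then the two-step chain of isomorphisms for $t=0$.
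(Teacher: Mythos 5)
Your proof is essentially the paper's proof: replace $\HGnHR{t}{\R}{H}$ by its identification from Proposition \ref{prop:Hn-and-const-functor.} (zero for $t\ge1$, the induced constant functor for $t=0$), then apply Lemma \ref{lem:lims-and-ext-groups.}. The paper's own proof is exactly these two steps and does not check the hypotheses of Lemma \ref{lem:lims-and-ext-groups.} either.

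One caveat on your bookkeeping: the claim that ``$\mathcal{C}$ is closed under $\F$-overconjugation precisely because $\overline{\mathcal{C}}$ is closed under $G$-conjugation'' is not correct. Closure of $\overline{\mathcal{C}}$ under $G$-conjugation only gives closure of $\mathcal{C}$ under $\F$-conjugation (isomorphisms), not under passing to overgroups. Overconjugation closure means: if $P\in\mathcal{C}$ and $\Hom_{\F}(P,Q)\neq\emptyset$ (so some $G$-conjugate of $P$ embeds as a proper subgroup of $Q$), then $Q\in\mathcal{C}$; that requires $\overline{\mathcal{C}}$ to contain all relevant supergroups as well, which does not follow from $G$-conjugation and product closure. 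In the paper's intended application $\overline{\mathcal{C}}$ is the collection of $p$-centric subgroups, which is overconjugation-closed, so the gap is harmless there, but the justification you give does not actually establish the hypothesis in the generality stated.
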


\begin{proof}
From Proposition \ref{prop:Hn-and-const-functor.} we know that
\[
\Ext_{\R\OFC}^{n}\left(\HGnHR{t}{\R}{H},M\right)=\begin{cases}
\Ext_{\R\OFC}^{n}\left(\underline{\R}_{\OFD{\FAB{H\cap S}{H}}{\mathcal{C}_{H}}}^{\OFC},M\right) & \text{if }t=0\\
0 & \text{else}
\end{cases}
\]
the result now follows from Lemma \ref{lem:lims-and-ext-groups.}.
\end{proof}

\section{\protect\label{sec:main-section}An exact sequence for higher limits
over the fusion orbit category.}

This section is dedicated to proving Theorem \hyperref[thm:A]{A}.
In order to make the text easier to follow it might be useful to start
this section with an outline of the proof.

With notation as in Theorem \hyperref[thm:A]{A} let $\overline{H}:=\left\{ G_{1},G_{2}\right\} $
and let $\Dt$ be the poset category whose objects are the subgroups
$G_{1},G_{2}$ and $S$ of $G$ and whose order is given by natural
inclusion (see Definition \ref{def:Dn}) and let $\NCGH{G}{\overline{H}}{C}:\Dt^{\text{op}}\to\Top$
be a functor sending each $H\in\Dt$ to the geometric realization
of the simplicial set $\NCGH{G}{H}{C}$ of Definition \ref{def:NCGH}.
We know from \cite[Proposition 5.7]{HigherLimitsOverTheFusionOrbitCategory}
that there exists a first quadrant cohomology spectral sequence starting
at page $2$ of the form
\begin{equation}
\Ext_{\Fp[2]\OFc}^{s}\left(H_{t}\left(C_{G}\left(?\right)\backslash\left(\hocolim_{\Dt}\left(\NCGH{G}{\overline{H}}{\overline{C}}\right)\right)^{?};\R\right),M\right)\Rightarrow H_{\OFc}^{s+t}\left(\hocolim_{\Dt}\left(\NCGH{G}{\overline{H}}{\overline{C}}\right);M\right).\label{eq:ss-general.}
\end{equation}
where $M$ is an $\R\OFc$-module and $H_{\OFc}^{s+t}\left(X;M\right)$
denotes the fusion Bredon cohomology of $X$ with coefficients in
$M$ (see Definition \ref{def:fusion-Bredon-cohomology}).

Applying a result of Bousfield and Kan (see \cite[\S XII 4.5]{BousfieldKanHomotopyLimitsColimitsCompletionsAndLocalizations})
we obtain an additional spectral sequence converging to $H_{\OFc}^{s+t}\left(\hocolim_{\mathcal{P}}\left(\left|\NCGH{G}{\overline{H}}{\overline{C}}\right|\right);M\right)$.
If $M$ is as in the statement of Theorem \hyperref[thm:A]{A} we
can use this spectral in order to obtain the following isomorphisms
for $n\ge1$ (see Corollaries \ref{cor:Lim-D2.} and \ref{cor:iso-bredon-cohomology-hocolim})
\begin{equation}
H_{\OFc}^{n}\left(\hocolim_{\mathcal{P}}\left(\NCGH{G}{\overline{H}}{\overline{C}}\right);M\right)\cong\begin{cases}
M\left(S\right)/\left(M^{\F_{1}}+M^{\F_{2}}\right) & \text{if }n=1\\
0 & \text{if }n>1
\end{cases}\label{eq:simplification-bredon}
\end{equation}
On the other hand, using the results of Section \ref{sec:Dwyer-spaces}
and Lemma \ref{lem:lims-and-ext-groups.}, we prove in Corollary \ref{cor:homology-TTCGH}
that
\begin{equation}
\Ext_{\R\OFc}^{s}\left(H_{t}\left(C_{G}\left(?\right)\backslash\left(\hocolim_{\mathcal{P}}\left(\NCGH{G}{\overline{H}}{\overline{C}}\right)\right)^{?};\R\right),M\right)\cong\begin{cases}
\limn[s]_{\OFc}M & \text{if }t=0\\
\Ext_{\Fp\OFc}^{s}\left(\CGpC,M\right) & \text{if }t\ge1\\
0 & \text{else}
\end{cases}\label{eq:simplification-ext.}
\end{equation}
We can therefore deduce that all except the first two rows of the
second page of the spectral sequence of Equation (\ref{eq:ss-general.})
vanish. Theorem \hyperref[thm:A]{A} finally follows after using Equations
(\ref{eq:simplification-bredon}) and (\ref{eq:simplification-ext.})
to rewrite the spectral sequence of Equation (\ref{eq:ss-general.})
and applying \cite[Exercise 5.2.2]{WeibelIntroductionToHomologicalAlgebra1994}
to the resulting spectral sequence.

\medskip{}

In order to provide a precise version of the above outlined proof
it is useful to start by introducing the following notation.
\begin{defn}
\label{def:Dn}Let $n\ge1$. We define $\Dn$ as the poset category
whose objects are the nonempty subsets of $\left\{ 1,\dots,n\right\} $
and whose order is given by reverse inclusion. 
\end{defn}

We are mostly concerned with the study of higher limits of functors
over the category $\Dt$ which we use to obtain the isomorphisms of
Equation (\ref{eq:simplification-bredon}). The following Lemma is
very useful to compute them
\begin{lem}
\label{lem:proj-res-R.}Let $\R$ be a commutative ring, let $\underline{\R}^{\Dt}\in\R\Dt\mod{}$
be as in Definition \ref{def:Constant-functor.}, define the modules
$P_{0},P_{1}\in\R\Dt\mod{}$ and the morphisms $\pi:P_{0}\to\underline{\R}^{\Dt}$
and $\iota:P_{1}\to P_{0}$ by setting
\begin{align*}
P_{0}\left(\left\{ 1,2\right\} \right) & =\R^{2}, & P_{0}\left(\left\{ 1\right\} \right) & =P_{0}\left(\left\{ 2\right\} \right)=\R,\\
P_{1}\left(\left\{ 1,2\right\} \right) & =\R, & P_{1}\left(\left\{ 1\right\} \right) & =P_{1}\left(\left\{ 2\right\} \right)=0,
\end{align*}
letting $P_{0}\left(\left\{ 1,2\right\} \preceq\left\{ i\right\} \right):P_{0}\left(\left\{ i\right\} \right)\to P_{0}\left(\left\{ 1,2\right\} \right)$
be the natural inclusion in the $i^{\text{th}}$ component and setting
\begin{align*}
\pi_{\left\{ i\right\} }:\underset{x}{P_{0}\left(\left\{ i\right\} \right)} & \underset{\to}{\to}\underset{x}{\underline{\R}^{\Dt}\left(\left\{ i\right\} \right)}, & \pi_{\left\{ 1,2\right\} }:\underset{x\oplus y}{P_{0}\left(\left\{ 1,2\right\} \right)} & \underset{\to}{\to}\underset{x+y}{\underline{\R}^{\Dt}\left(\left\{ 1,2\right\} \right)},\\
\iota_{\left\{ 1,2\right\} }:\underset{x}{P_{1}\left(\left\{ 1,2\right\} \right)} & \underset{\to}{\to}\underset{x\oplus-x}{P_{0}\left(\left\{ 1,2\right\} \right)}.
\end{align*}

The following is a projective resolution of $\underline{\R}^{\Dt}$
\[
0\to P_{1}\stackrel{\iota}{\to}P_{0}\stackrel{\pi}{\to}\underline{\R}^{\Dt}\to0.
\]
\end{lem}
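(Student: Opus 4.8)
The plan is to verify directly that the given complex $0 \to P_1 \stackrel{\iota}{\to} P_0 \stackrel{\pi}{\to} \underline{\R}^{\Dt} \to 0$ is exact and that $P_0, P_1$ are projective $\R\Dt$-modules. Since $\Dt$ has only three objects, all the data can be checked objectwise; the only subtlety is that exactness must be checked as a sequence of functors, i.e. compatibly with the three nonidentity morphisms $\{1,2\}\preceq\{1\}$, $\{1,2\}\preceq\{2\}$, and (composition is forced) the induced ones.

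First I would record the projectivity of $P_0$ and $P_1$. The standard projective $\R\Dt$-modules are the representable functors $\R\Hom_{\Dt}(-, c)$ for $c \in \Dt$ (equivalently the modules $\R\Dt e_c$ for the idempotents $e_c$); by Yoneda these are projective, and a direct computation of the $\Hom$-sets in $\Dt$ (recall the order is reverse inclusion, so $\Hom_{\Dt}(\{i\}, \{1,2\})$ is a single morphism and $\Hom_{\Dt}(\{1,2\},\{i\})=\emptyset$) shows that $P_1 \cong \R\Hom_{\Dt}(-,\{1,2\})$ and $P_0 \cong \R\Hom_{\Dt}(-,\{1\}) \oplus \R\Hom_{\Dt}(-,\{2\})$. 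Hence both are projective. (Alternatively one cites that a functor which is a direct sum of representables, equivalently is "free" on a set of objects, is projective, which is \cite[Proposition 2.1 and its consequences]{WebbRepresentationAndCohomologyOfCategories}.)

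Next I would check exactness objectwise. At $\{1\}$ the complex reads $0 \to 0 \to \R \stackrel{\Id}{\to} \R \to 0$, which is exact; symmetrically at $\{2\}$. At $\{1,2\}$ it reads $0 \to \R \stackrel{\iota_{\{1,2\}}}{\to} \R^2 \stackrel{\pi_{\{1,2\}}}{\to} \R \to 0$ where $\iota_{\{1,2\}}(x) = x \oplus (-x)$ and $\pi_{\{1,2\}}(x\oplus y) = x+y$; clearly $\iota_{\{1,2\}}$ is injective, $\pi_{\{1,2\}}$ is surjective, and $\ker \pi_{\{1,2\}} = \{x \oplus (-x)\} = \im \iota_{\{1,2\}}$, so this is exact as well. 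Finally I would confirm that $\pi$ and $\iota$ are genuine natural transformations: for the inclusion $\{1,2\}\preceq\{1\}$ one needs the square relating $P_0(\{1\})\to P_0(\{1,2\})$ (inclusion in the first coordinate) with $\pi_{\{1\}}$ and $\pi_{\{1,2\}}$ to commute, i.e. $\pi_{\{1,2\}}(x\oplus 0) = x = \pi_{\{1\}}(x)$, which holds; the composite $\pi\iota$ vanishes since $\pi_{\{1,2\}}(x\oplus(-x)) = 0$; and $\iota$ is natural because its only nonzero component sits over $\{1,2\}$ and $P_1$ vanishes on the other objects.

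**The main obstacle**, such as it is, is purely bookkeeping: being careful that the maps $P_0(\{1,2\}\preceq\{i\})$ point from $P_0(\{i\})$ \emph{into} $P_0(\{1,2\})$ (contravariant functors, reverse-inclusion order), so that $P_0$ really is the functor whose value grows as the subset shrinks, and that all squares one writes down have their arrows in the correct direction. Once the conventions are pinned down, exactness and projectivity are immediate from the objectwise computations above, so there is no genuine difficulty; I would present it as a short verification.
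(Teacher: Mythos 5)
Your approach is essentially the same as the paper's: the paper notes $P_{0}\oplus P_{1}\cong\R\Dt$ (the regular module, i.e.\ $\bigoplus_{c\in\Dt}\R\Hom_{\Dt}\left(-,c\right)$) to get projectivity, then checks exactness objectwise exactly as you do. One small slip worth flagging: in your parenthetical the Hom-sets are reversed --- with the reverse-inclusion order we have $\left\{1,2\right\}\preceq\left\{i\right\}$, so $\Hom_{\Dt}\left(\left\{1,2\right\},\left\{i\right\}\right)$ is the singleton and $\Hom_{\Dt}\left(\left\{i\right\},\left\{1,2\right\}\right)=\emptyset$; this is precisely what makes the paper's structure map $P_{0}\left(\left\{1,2\right\}\preceq\left\{i\right\}\right)\colon P_{0}\left(\left\{i\right\}\right)\to P_{0}\left(\left\{1,2\right\}\right)$ point the way it does, by contravariance. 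Your identifications $P_{1}\cong\R\Hom_{\Dt}\left(-,\left\{1,2\right\}\right)$ and $P_{0}\cong\R\Hom_{\Dt}\left(-,\left\{1\right\}\right)\oplus\R\Hom_{\Dt}\left(-,\left\{2\right\}\right)$ are nevertheless the correct ones; they follow from the corrected Hom-sets rather than the ones you wrote.
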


\begin{proof}
It is immediate from definition that $P_{0}\oplus P_{1}\cong\R\Dt$
as $\R\Dt\text{-modules}$. We can therefore conclude that both $P_{0}$
and $P_{1}$ are projective. On the other hand $\iota$ is clearly
injective and $\pi$ is clearly surjective with kernel $\ker\left(\pi\right)=\left\{ x\oplus-x\in P_{0}\left(\left\{ 1,2\right\} \right)\,:\,x\in\R\right\} =\im\left(\iota\right)$.
The result follows.
\end{proof}
Using Lemma \ref{lem:proj-res-R.} we can compute higher limits over
$\Dt$ as follows.
\begin{cor}
\label{cor:Lim-D2.}Let $\R$ be a commutative ring and let $M$ be
a $\R\Dt\text{-module}$. Then we have that
\[
\limn_{\Dt}M\cong\begin{cases}
\limn[]_{\Dt}M & \text{if }n=0\\
M\left(\left\{ 1,2\right\} \right)/\left(\im\left(M\left(\left\{ 1,2\right\} \preceq\left\{ 1\right\} \right)\right)+\im\left(M\left(\left\{ 1,2\right\} \preceq\left\{ 2\right\} \right)\right)\right) & \text{if }n=1\\
0 & \text{if }n\ge2
\end{cases}
\]
\end{cor}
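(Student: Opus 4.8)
The plan is to extract all three cases directly from the two-term projective resolution of Lemma~\ref{lem:proj-res-R.}. The starting point is the standard identification of higher limits with $\Ext$-groups over the category algebra: by \cite[Corollary~5.2]{WebbRepresentationAndCohomologyOfCategories} there are natural isomorphisms $\limn_{\Dt}M\cong\Ext^{n}_{\R\Dt}\left(\underline{\R}^{\Dt},M\right)$ for every $n\ge0$ and every $\R\Dt$-module $M$. Applying $\Hom_{\R\Dt}(-,M)$ to the resolution $0\to P_{1}\stackrel{\iota}{\to}P_{0}\stackrel{\pi}{\to}\underline{\R}^{\Dt}\to0$ of Lemma~\ref{lem:proj-res-R.} then reduces the problem to computing the cohomology of the two-term complex
\[
0\to\Hom_{\R\Dt}(P_{0},M)\stackrel{\iota^{*}}{\to}\Hom_{\R\Dt}(P_{1},M)\to0
\]
concentrated in degrees $0$ and $1$. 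In particular $\limn_{\Dt}M\cong\Ext^{n}_{\R\Dt}\left(\underline{\R}^{\Dt},M\right)=0$ for all $n\ge2$, which settles that case.

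It then remains to identify the two $\Hom$-groups together with the differential $\iota^{*}$. The key observation I would use is that $P_{0}$ and $P_{1}$ are (sums of) representable $\R\Dt$-modules: comparing the values and structure maps listed in Lemma~\ref{lem:proj-res-R.}, one reads off $P_{1}\cong\R\Hom_{\Dt}(-,\{1,2\})$ (here it is relevant that $\{1,2\}$ is the minimum of $\Dt$ for the reverse-inclusion order, so that $\Hom_{\Dt}(\{i\},\{1,2\})=\emptyset$) and $P_{0}\cong\R\Hom_{\Dt}(-,\{1\})\oplus\R\Hom_{\Dt}(-,\{2\})$, consistently with the isomorphism $P_{0}\oplus P_{1}\cong\R\Dt$ noted in the proof of that lemma. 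The Yoneda lemma then supplies natural identifications $\Hom_{\R\Dt}(P_{1},M)\cong M(\{1,2\})$ and $\Hom_{\R\Dt}(P_{0},M)\cong M(\{1\})\oplus M(\{2\})$.

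Next I would transport $\iota^{*}$ across these identifications. Writing $\mu_{i}$ for the unique morphism $\{1,2\}\to\{i\}$ in $\Dt$, a pair $(a_{1},a_{2})\in M(\{1\})\oplus M(\{2\})$ corresponds to the natural transformation $P_{0}\to M$ carrying the two Yoneda generators to $a_{1}$ and $a_{2}$; since the component of $\iota$ at $\{1,2\}$ sends the generator of $P_{1}(\{1,2\})=\R$ to $1\oplus(-1)=P_{0}(\mu_{1})(1)-P_{0}(\mu_{2})(1)\in\R^{2}=P_{0}(\{1,2\})$, naturality yields $\iota^{*}(a_{1},a_{2})=M(\mu_{1})(a_{1})-M(\mu_{2})(a_{2})$, i.e.\ $\iota^{*}(a_{1},a_{2})=M\left(\{1,2\}\preceq\{1\}\right)(a_{1})-M\left(\{1,2\}\preceq\{2\}\right)(a_{2})$. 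Hence $\coker(\iota^{*})=M(\{1,2\})/\left(\im M(\{1,2\}\preceq\{1\})+\im M(\{1,2\}\preceq\{2\})\right)$, which gives the $n=1$ case, while $\ker(\iota^{*})$ is the fibre product of the cospan $M(\{1\})\to M(\{1,2\})\leftarrow M(\{2\})$, which is exactly $\limn[]_{\Dt}M$; this gives the $n=0$ case.

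The argument is bookkeeping once the resolution is available; the only step that needs a little care is tracking the component of $\iota$ through the Yoneda isomorphisms so that $\coker(\iota^{*})$ emerges as the \emph{sum} of the two images (any sign discrepancy is harmless, since $\im(\iota^{*})$ is a submodule) and checking that $\ker(\iota^{*})$ is genuinely the categorical limit over $\Dt$ rather than an unnamed pullback. I do not expect either to cause difficulty.
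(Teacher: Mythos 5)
Your proof is correct and follows essentially the same route as the paper's: identify higher limits with $\Ext^n_{\R\Dt}(\underline{\R}^{\Dt},-)$ via \cite[Corollary~5.2]{WebbRepresentationAndCohomologyOfCategories}, apply $\Hom_{\R\Dt}(-,M)$ to the two-term resolution of Lemma~\ref{lem:proj-res-R.}, and read off the three cases; the only cosmetic difference is that you justify the identifications $\Hom_{\R\Dt}(P_i,M)$ via Yoneda on representables and verify the $n=0$ case directly as a pullback, whereas the paper states the $\Hom$ isomorphisms and the $n=0$ case without elaboration.
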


\begin{proof}
It is well known that $\limn[0]_{\Dt}M\cong\lim_{\Dt}M$. This proves
the first identity.

We know from \cite[Corollary 5.2]{WebbRepresentationAndCohomologyOfCategories}
that for every $n\ge0$ then $\limn_{\Dt}M\cong\Ext_{\R\Dt}^{n}\left(\underline{\R}^{\Dt},M\right)$.
From Lemma \ref{lem:proj-res-R.} we also know that $\underline{\R}^{\Dt}$
has projective dimension $2$. Therefore, for every $n\ge2$, we have
that
\[
\limn_{\Dt}M\cong\Ext_{\R\Dt}^{n}\left(\underline{\R}^{\Dt},M\right)=0.
\]
Finally, with notation as in Lemma \ref{lem:proj-res-R.}, we have
that
\begin{align*}
\Hom_{\R\Dt}\left(P_{1},M\right) & \cong M\left(\left\{ 1,2\right\} \right), & \Hom_{\R\Dt}\left(P_{0},M\right) & \cong M\left(\left\{ 1\right\} \right)\oplus M\left(\left\{ 2\right\} \right).
\end{align*}
As a result we obtain the following isomorphisms which complete the
proof
\begin{align*}
\limn[1]_{\Dt}M & \cong\Ext_{\R\Dt}^{1}\left(\underline{\R}^{\Dt},M\right),\\
 & \cong\Hom_{\R\Dt}\left(P_{1},M\right)/\im\left(\iota^{*}:\Hom_{\R\Dt}\left(P_{0},M\right)\to\Hom_{\R\Dt}\left(P_{1},M\right)\right),\\
 & \cong M\left(\left\{ 1,2\right\} \right)/\left(\im\left(M\left(\left\{ 1,2\right\} \to\left\{ 1\right\} \right)\right)+\im\left(M\left(\left\{ 1,2\right\} \to\left\{ 2\right\} \right)\right)\right).
\end{align*}
\end{proof}
We can now define the following functor from $\Dn$ to the category
of simplicial sets.
\begin{defn}
\label{def:NCGH-functor}Let $G$ be a group, let $S\le G$ be a finite
$p$-group, let $\overline{\mathcal{C}}$ be a family of subgroups
of $G$ closed under $G$ conjugation, let $\mathcal{C}:=\left\{ P\in\overline{\mathcal{C}}\,:\,P\le S\right\} $,
let $n\ge1$ and let $\overline{H}:=\left\{ H_{i}\right\} _{i=1,\dots,n}$
be a finite family of subgroups of $G$. For every $P\in\mathcal{C}$
we define the functors $\NCGH{G}{\overline{H}}{\overline{C}}$ and
$\NNCGH{G}{\overline{H}}{\overline{C}}$ as the functors from $\Dn$
to the category of $G$ topological spaces by setting for every nonempty
subset $I\subseteq\left\{ 1,\dots,n\right\} $
\begin{align*}
\NCGH{G}{\overline{H}}{\overline{C}}\left(I\right) & :=\left|\NCGH{G}{H_{I}}{\overline{C}}\right|,\\
\NNCGH{G}{\overline{H}}{C}\left(P\right)\left(I\right) & :=\left|\NNCGH{G}{H_{I}}{C}\left(P\right)\right|,
\end{align*}
where $H_{I}:=\bigcap_{i\in I}H_{i}$ and $\NCGH{G}{H_{I}}{\overline{C}}$
and $\NNCGH{G}{H_{I}}{C}$ are as in Definitions \ref{def:NCGH} and
\ref{def:NNCGH}.
\end{defn}

We want to attribute to the homotopy colimit of $\NCGH{G}{\overline{H}}{\overline{C}}$
a $G$ action compatible with its simplicial set structure. In order
to do so we need to introduce the following.
\begin{defn}
\label{def:transport-category.}Let $\boldsymbol{C}$ be a small category
and let $F:\boldsymbol{C}\to\Top$ be a functor. The \textbf{transport
category of $F$} is the category $\Tr\left(F\right)$ whose objects
are tuples of the form $\left(c,x\right)$ with $c\in\boldsymbol{C}$
and $x\in F\left(c\right)$ and whose morphisms are given by
\[
\Hom_{\Tr\left(F\right)}\left(\left(c,x\right),\left(c',x'\right)\right):=\left\{ \varphi\in\Hom_{\boldsymbol{C}}\left(c,c'\right)\,:\,F\left(\varphi\right)\left(x\right)=x'\right\} .
\]
The composition of morphisms in $\Tr\left(F\right)$ is inherited
from the composition of morphisms in $\boldsymbol{C}$.
\end{defn}

Let $G,\overline{\mathcal{C}},\overline{H}$ and $n$ be as in Definition
\ref{def:NCGH-functor}. We know from \cite[Proposition 5.12]{Dwyer2001HomotopyTheorwticMethodsInGroupCohomology}
that the homotopy colimit $\hocolim_{\Dn}\left(\NCGH{G}{\overline{H}}{\overline{C}}\right)$
is naturally isomorphic to the geometric realization of the nerve
of $\Tr\left(\NCGH{G}{\overline{H}}{\overline{C}}\right)$. Thus,
using the $G$ set structure of $\NCGH{G}{\overline{H}}{\overline{C}}$,
for each $I\in\Dn$ we obtain the following.
\begin{lem}
\label{lem:hocolim-as-nerve-of-transport.}Let $G$ be a group, let
$\overline{\mathcal{C}}$ be a family of subgroups of $G$ closed
under $G$ conjugation, let $n\ge1$ and let $\overline{H}:=\left\{ H_{i}\right\} _{i=1,\dots,n}$
be a finite family of subgroups of $G$. The transport category $\Tr\left(\NCGH{G}{\overline{H}}{\overline{C}}\right)$
admits a $G$ action defined by setting $g\cdot\left(I,x\right)=\left(I,g\cdot x\right)$
for every $g\in G$ and every $\left(I,x\right)\in\Tr\left(\NCGH{G}{\overline{H}}{\overline{C}}\right)$.
This induces a $G$ action on the homotopy colimit $\hocolim_{\Dn}\left(\NCGH{G}{\overline{H}}{\overline{C}}\right)$.
\end{lem}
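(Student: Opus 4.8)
The plan is to exhibit the asserted action first at the level of the transport category and then push it through the nerve--realization description of the homotopy colimit. The first step is to observe that $F:=\NCGH{G}{\overline{H}}{\overline{C}}$ is not merely a functor $\Dn\to\Top$ but actually factors through the category of $G$-spaces and $G$-maps. Each value $F(I)=\left|\NCGH{G}{H_{I}}{\overline{C}}\right|$ is the geometric realization of a $G$-simplicial set, namely the nerve of the $G$-poset $\PCGH{G}{H_{I}}{\overline{C}}$ of Definition \ref{def:PCGH.}, so it carries a canonical $G$-action; what remains is to check that the structure maps of $F$ are $G$-equivariant. These structure maps are induced, for $I\preceq J$ in $\Dn$ (hence $H_{I}\subseteq H_{J}$), by the functors $\PCGH{G}{H_{I}}{\overline{C}}\to\PCGH{G}{H_{J}}{\overline{C}}$ sending $(H_{I}x,P)\mapsto(H_{J}x,P)$, and $G$-equivariance is an immediate one-line check against the action formula $g\cdot(Hx,P)=(Hxg^{-1},\lui{g}{P})$ of Definition \ref{def:PCGH.}: both ways around the relevant square carry $(H_{I}x,P)$ to $(H_{J}xg^{-1},\lui{g}{P})$.

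Granting this, I would define the $G$-action on $\Tr(F)$ as follows. On objects, $g\cdot(I,x):=(I,g\cdot x)$, which lies in $\Tr(F)$ since $g\cdot x\in F(I)$. A morphism $(I,x)\to(J,y)$ of $\Tr(F)$ is a morphism $\varphi\in\Hom_{\Dn}(I,J)$ with $F(\varphi)(x)=y$; equivariance of $F(\varphi)$ then gives $F(\varphi)(g\cdot x)=g\cdot y$, so $\varphi$ also defines a morphism $(I,g\cdot x)\to(J,g\cdot y)$, and I let $g$ send the former to the latter. The three remaining checks are formal: each $g$ is an endofunctor of $\Tr(F)$ because identities and composition in $\Tr(F)$ live entirely in the $\Dn$-coordinate, which $g$ does not touch; $e$ acts as $\Id_{\Tr(F)}$; and $(gh)\cdot(I,x)=(I,(gh)\cdot x)=(I,g\cdot(h\cdot x))=g\cdot\bigl(h\cdot(I,x)\bigr)$ since every $F(I)$ is a genuine $G$-set. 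Thus $g\mapsto(g\cdot-)$ is a group homomorphism $G\to\Aut(\Tr(F))$, i.e.\ a $G$-action of exactly the stated form.

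The last step is to transport this action to the homotopy colimit. By \cite[Proposition 5.12]{Dwyer2001HomotopyTheorwticMethodsInGroupCohomology} there is a natural isomorphism $\hocolim_{\Dn}(F)\cong\left|\mathcal{N}(\Tr(F))\right|$, and both $\mathcal{N}(-)$ and $\left|-\right|$ are functorial, so the action by automorphisms of $\Tr(F)$ induces one on $\left|\mathcal{N}(\Tr(F))\right|$ and hence, along the natural isomorphism, on $\hocolim_{\Dn}(F)$; since it is visibly realized already on the simplicial set $\mathcal{N}(\Tr(F))$, it is compatible with the simplicial structure of the homotopy colimit. I do not expect a genuine obstacle in any of this: the argument is pure functoriality, and the only point requiring real care is the first step --- making the structure maps of $F$ explicit (Definition \ref{def:NCGH-functor} records only their values on objects) and verifying their $G$-equivariance. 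One minor bookkeeping caveat is that $\Tr(F)$ is a large category, its objects being points of a space, but this is harmless for forming its nerve and realization, exactly as in the cited reference.
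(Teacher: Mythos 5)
Your argument is correct and matches the route the paper has in mind: the paper omits a formal proof, simply invoking Dwyer's Proposition 5.12 and the pointwise $G$-set structure in the remark preceding the lemma, and your write-up spells out exactly those missing details (equivariance of the structure maps, functoriality of $g\cdot$ on the transport category, and transport of the action through nerve and realization). The only point you rightly flag as genuinely requiring care --- that the structure maps of $\NCGH{G}{\overline{H}}{\overline{C}}$ are the evident quotient maps $(H_{I}x,P)\mapsto(H_{J}x,P)$ and are $G$-equivariant --- is handled correctly, and it is consistent with how these maps are implicitly used later in Definition \ref{def:TCGH}.
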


In order to study the homotopy colimit $\hocolim_{\Dn}\left(\NCGH{G}{\overline{H}}{\overline{C}}\right)$
it is useful to first introduce the following notation
\begin{defn}
\label{def:TCGH}Let $G$ be a group, let $S\le G$ be a finite $p$-group,
let $\F:=\FSG$ be a (non necessarily saturated) fusion system, let
$\overline{\mathcal{C}}$ be any family of subgroups of $G$ closed
under $G$ conjugation and taking products, let $n\ge1$, let $\overline{H}:=\left\{ H_{1},\dots,H_{n}\right\} $
be a family of subgroups of $G$ and for every $P\in\overline{\mathcal{C}}$
define $\boldsymbol{C}_{P}$ as the poset category whose objects are
tuples of the form $\left(I,H_{I}x\right)$ with $\emptyset\not=I\subseteq\left\{ 1,\dots,n\right\} $,
$H_{I}$ as in Definition \ref{def:NCGH-functor} and $x\in N_{G}\left(P,H_{I}\right)$
and whose order is given by setting $\left(I,H_{I}x\right)\preceq\left(J,H_{J}y\right)$
if and only if $J\subseteq I$ and $H_{J}x=H_{J}y$. We define:
\begin{itemize}
\item $\TCGH{G}{\overline{H}}{\overline{C}}:\OGC^{\text{op}}\to\Top$ as
the functor sending every $P\in\overline{\mathcal{C}}$ to the geometric
realization of the nerve of the category $\boldsymbol{C}_{P}$.
\item $\TTCGH{G}{\overline{H}}{C}:\OFC^{\text{op}}\to\Top$ as the functor
sending every $P\in\mathcal{C}$ to
\[
\TTCGH{G}{\overline{H}}{C}\left(P\right):=C_{G}\left(P\right)\backslash\TCGH{G}{\overline{H}}{\overline{C}}\left(P\right).
\]
\end{itemize}
\end{defn}

The following is one of the key result needed in order to compute
the homology groups of the homotopy colimit of $\NCGH{G}{\overline{H}}{\overline{C}}$.
\begin{lem}
\label{lem:weak-equiv-hocolim.}With notation as in Definition \ref{def:TCGH},
for every $P\in\mathcal{C}$ there exists a natural weak equivalence
of simplicial sets
\[
C_{G}\left(P\right)\backslash\left(\hocolim_{\Dn}\left(\NCGH{G}{\overline{H}}{\overline{C}}\right)\right)^{P}\simeq\TTCGH{G}{\overline{H}}{C}\left(P\right)
\]
\end{lem}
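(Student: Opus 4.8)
The plan is to present the homotopy colimit as the geometric realization of the nerve of a transport category, commute the operations $(-)^{P}$ and $C_{G}\left(P\right)\backslash(-)$ past it — which is legitimate since $P$ and $C_{G}\left(P\right)$ act trivially on the indexing poset $\Dn$, moving only the space coordinate of $\NCGH{G}{\overline{H}}{\overline{C}}\left(I\right)$ — and then to replace the resulting $\Dn$-diagram of categories by a $\Dn$-diagram of discrete categories whose homotopy colimit is visibly $\TCGH{G}{\overline{H}}{\overline{C}}\left(P\right)$. Throughout I would keep track of $N_{G}\left(P\right)$-equivariance, so that the final passage to $C_{G}\left(P\right)$-orbits is applied to an equivariant homotopy equivalence rather than a bare weak equivalence.

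First I would deal with $(-)^{P}$. Geometric realization commutes with homotopy colimits and with fixed points of finite groups, and on the bar-construction model of $\hocolim_{\Dn}$ the $k$-simplices form a coproduct indexed by chains in $\Dn$, on which $P$ acts trivially; hence $\left(\hocolim_{\Dn}\left(\NCGH{G}{\overline{H}}{\overline{C}}\right)\right)^{P}\cong\left|\hocolim_{\Dn}\left(I\mapsto\left(\NCGH{G}{H_{I}}{\overline{C}}\right)^{P}\right)\right|$ as $N_{G}\left(P\right)$-spaces (here Lemma \ref{lem:hocolim-as-nerve-of-transport.} supplies the $G$-action, hence the $N_{G}\left(P\right)$-action). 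By Lemma \ref{lem:first-weak-equivalence.} (applied with $H=H_{I}$), each $\left(\NCGH{G}{H_{I}}{\overline{C}}\right)^{P}$ is, $N_{G}\left(P\right)$-equivariantly, the nerve of the full subcategory $\mathcal{P}_{I}\subseteq\PCGH{G}{H_{I}}{\overline{C}}$ of tuples $\left(H_{I}x,Q\right)$ with $P\le N_{G}\left(Q\right)$ and $x\in N_{G}\left(PQ,H_{I}\right)$; this identification is compatible with the maps induced by $H_{I}\hookrightarrow H_{J}$ for $J\subseteq I$, so it is an isomorphism of $\Dn$-diagrams. Thus the left hand side of the statement becomes $C_{G}\left(P\right)\backslash\left|\hocolim_{\Dn}\left(I\mapsto\mathcal{N}\left(\mathcal{P}_{I}\right)\right)\right|$.

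Next I would reduce each $\mathcal{P}_{I}$ to a discrete category, reusing the functors from the proofs of Lemmas \ref{lem:second-weak-equivalence} and \ref{lem:weak-equiv-to-discrete-points.}: first $\left(H_{I}x,Q\right)\mapsto\left(H_{I}x,PQ\right)$, available because $\overline{\mathcal{C}}$ is closed under products, then $\left(H_{I}x,Q\right)\mapsto\left(H_{I}x,P\right)$. These are $N_{G}\left(P\right)$-equivariant, compatible with $H_{I}\hookrightarrow H_{J}$, and carry the same one-sided inverses and connecting natural transformations as in those proofs; by \cite[Proposition 5.2]{Dwyer2001HomotopyTheorwticMethodsInGroupCohomology} applied objectwise they induce an $N_{G}\left(P\right)$-equivariant homotopy equivalence between the $\Dn$-diagrams $I\mapsto\mathcal{N}\left(\mathcal{P}_{I}\right)$ and $I\mapsto\mathcal{N}\left(\mathcal{Q}_{I}\right)$, where $\mathcal{Q}_{I}$ is the discrete category on the set $H_{I}\backslash N_{G}\left(P,H_{I}\right)$. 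Since $\hocolim_{\Dn}$ and geometric realization send objectwise equivariant homotopy equivalences to equivariant homotopy equivalences, this yields an $N_{G}\left(P\right)$-equivariant homotopy equivalence $\left|\hocolim_{\Dn}\left(I\mapsto\mathcal{N}\left(\mathcal{P}_{I}\right)\right)\right|\simeq\left|\hocolim_{\Dn}\left(I\mapsto\mathcal{N}\left(\mathcal{Q}_{I}\right)\right)\right|$. Because each $\mathcal{N}\left(\mathcal{Q}_{I}\right)$ is just the discrete set $H_{I}\backslash N_{G}\left(P,H_{I}\right)$, \cite[Proposition 5.12]{Dwyer2001HomotopyTheorwticMethodsInGroupCohomology} identifies the right hand space with the realization of the nerve of $\Tr\left(I\mapsto H_{I}\backslash N_{G}\left(P,H_{I}\right)\right)$, and an inspection of objects and morphisms shows this transport category is exactly $\boldsymbol{C}_{P}$ from Definition \ref{def:TCGH}; hence the right hand space is $\TCGH{G}{\overline{H}}{\overline{C}}\left(P\right)$. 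Finally, the whole comparison is assembled from $C_{G}\left(P\right)$-equivariant functors, natural transformations and homotopies, hence is a $C_{G}\left(P\right)$-equivariant homotopy equivalence; passing to $C_{G}\left(P\right)$-orbits yields $C_{G}\left(P\right)\backslash\left(\hocolim_{\Dn}\left(\NCGH{G}{\overline{H}}{\overline{C}}\right)\right)^{P}\simeq C_{G}\left(P\right)\backslash\TCGH{G}{\overline{H}}{\overline{C}}\left(P\right)=\TTCGH{G}{\overline{H}}{C}\left(P\right)$, and naturality in $P$ follows because every construction used is compatible with conjugation by elements of $G$, hence with the morphisms of $\OFC$.

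The step I expect to be the main obstacle is the bookkeeping around the $C_{G}\left(P\right)$-quotient: taking orbits commutes neither with the nerve nor with geometric realization, so one cannot simply quotient the categories $\mathcal{P}_{I}$, $\mathcal{Q}_{I}$ and $\boldsymbol{C}_{P}$ and compare their nerves. The argument above circumvents this by deferring the quotient until the very end and insisting that every functor, natural transformation and homotopy in the chain — including the identifications of Lemmas \ref{lem:first-weak-equivalence.}, \ref{lem:second-weak-equivalence} and \ref{lem:weak-equiv-to-discrete-points.} and the applications of \cite[Propositions 5.11 and 5.12]{Dwyer2001HomotopyTheorwticMethodsInGroupCohomology} — be $C_{G}\left(P\right)$-equivariant, so that the comparison is an equivariant homotopy equivalence and therefore descends to orbits. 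A secondary but essential point is that these reductions must be carried out uniformly in $I$, so that they assemble into morphisms of $\Dn$-diagrams, and this is precisely where the hypothesis that $\overline{\mathcal{C}}$ is closed under products enters, exactly as in Lemma \ref{lem:second-weak-equivalence}.
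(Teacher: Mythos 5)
Your proof is correct but takes a genuinely different route from the paper's. The paper identifies $\hocolim_{\Dn}\left(\NCGH{G}{\overline{H}}{\overline{C}}\right)$ with the realization of the nerve of a transport category via Lemma \ref{lem:hocolim-as-nerve-of-transport.}, and then \emph{immediately} pushes both $(-)^{P}$ and $C_{G}\left(P\right)\backslash(-)$ through the nerve, identifying $C_{G}\left(P\right)\backslash\Tr\left(\NCGH{G}{\overline{H}}{\overline{C}}\right)^{P}$ with $\Tr\left(\NNCGH{G}{\overline{H}}{C}\left(P\right)\right)$; after that it only needs weak equivalences (Lemma \ref{lem:fusion-weak-equivalent-to-discrete-points.} applied objectwise plus \cite[Remark 4.14 and Proposition 5.12]{Dwyer2001HomotopyTheorwticMethodsInGroupCohomology}) to land on $\TTCGH{G}{\overline{H}}{C}\left(P\right)$. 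You instead postpone the $C_{G}\left(P\right)$-quotient to the very end: you take $P$-fixed points objectwise, use Lemma \ref{lem:first-weak-equivalence.} and the explicit functors from Lemmas \ref{lem:second-weak-equivalence} and \ref{lem:weak-equiv-to-discrete-points.} to build an $N_{G}\left(P\right)$-equivariant (hence $C_{G}\left(P\right)$-equivariant) homotopy equivalence of $\Dn$-diagrams to the discrete model, identify that hocolim with $\TCGH{G}{\overline{H}}{\overline{C}}\left(P\right)$, and only then divide by $C_{G}\left(P\right)$. The trade-off is real: the paper's route is shorter and never needs to upgrade weak equivalences to equivariant homotopy equivalences, but it quietly uses that $C_{G}\left(P\right)$-orbits commute with the nerve of the transport category — a commutation that does not hold for arbitrary group actions on categories and is asserted without comment. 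Your route is longer because it tracks equivariance through every functor and natural transformation, but it sidesteps that orbit/nerve commutation entirely by producing a genuine equivariant homotopy equivalence before passing to orbit spaces, which is exactly what is required for that last step to be legitimate; you also correctly locate where the hypothesis that $\overline{\mathcal{C}}$ is closed under products is used. Both arguments reach the same identification of the end object with $\boldsymbol{C}_{P}$ and hence with $\TTCGH{G}{\overline{H}}{C}\left(P\right)$.
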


\begin{proof}
Throughout this proof we write $X:=C_{G}\left(P\right)\backslash\left(\hocolim_{\Dn}\left(\NCGH{G}{\overline{H}}{\overline{C}}\right)\right)^{P}$.
From Lemma \ref{lem:hocolim-as-nerve-of-transport.} we know that
$X$ is naturally isomorphic to $\left|C_{G}\left(P\right)\backslash\mathcal{N}\left(\Tr\left(\NCGH{G}{\overline{H}}{\overline{C}}\right)\right)^{P}\right|$.
Since the action of $G$ on $\left|\mathcal{N}\left(\Tr\left(\NCGH{G}{\overline{H}}{\overline{C}}\right)\right)\right|$
derives from the action of $G$ on $\Tr\left(\NCGH{G}{\overline{H}}{\overline{C}}\right)$
we can conclude that $X$ is naturally isomorphic to $\left|\mathcal{N}\left(C_{G}\left(P\right)\backslash\Tr\left(\NCGH{G}{\overline{H}}{\overline{C}}\right)^{P}\right)\right|$.
From the description of the action of $G$ on $\Tr\left(\NCGH{G}{\overline{H}}{\overline{C}}\right)^{P}$
(see Lemma \ref{lem:hocolim-as-nerve-of-transport.}) we can now deduce
that the category $C_{G}\left(P\right)\backslash\Tr\left(\NCGH{G}{\overline{H}}{\overline{C}}\right)^{P}$
is naturally isomorphic to the category $\Tr\left(\NNCGH{G}{\overline{H}}{C}\left(P\right)\right)$
(see Definition \ref{def:NNCGH}). We know from Lemma \ref{lem:fusion-weak-equivalent-to-discrete-points.}
that for every $I\in\Dn$ the simplicial set $\NNCGH{G}{\overline{H}}{C}\left(P\right)\left(I\right)$
is weak equivalent to the nerve of the category $\boldsymbol{C}_{P,I}$
whose objects are the double cosets of $H\backslash N_{G}\left(P,H_{I}\right)/C_{G}\left(P\right)$
and whose only morphisms are the identity morphisms. Therefore its
geometric realization is weak equivalent to the discrete topological
space with points indexed by $H\backslash N_{G}\left(P,H_{I}\right)/C_{G}\left(P\right)$.
Let $F:\Dn\to\Top$ be the functor that sends every $I\in\Dn$ to
this topological space. It follows from \cite[Remark 4.14 and Proposition 5.12]{Dwyer2001HomotopyTheorwticMethodsInGroupCohomology}
that $X$ is naturally weak equivalent to $\left|\mathcal{N}\left(\Tr\left(F\right)\right)\right|$.
Since $\left|\mathcal{N}\left(\Tr\left(F\right)\right)\right|$ is
precisely the topological space $\TTCGH{G}{\overline{H}}{C}\left(P\right)$
the result follows.
\end{proof}
Before proceeding let us introduce some useful notation.

Let $G_{1},G_{2}$ be finite groups, let $S$ be a $p$-group satisfying
$S\in\text{Syl}_{p}\left(G_{i}\right)$ for $i=1,2$, define $G:=G_{1}*_{S}G_{2}$,
let $P\le S$ and for every $x\in N_{G}\left(P,G_{i}\right)$ let
$\overline{G_{i}}x:=\left\{ G_{i}xz\,:\,z\in C_{G}\left(P\right)\right\} $.

Fix $i=1,2$. With notation as above we have that 
\[
G_{i}\backslash N_{G}\left(P,G_{i}\right)=\bigsqcup_{x\in\left[G_{i}\backslash N_{G}\left(P,G_{i}\right)/C_{G}\left(P\right)\right]}\overline{G_{i}}x,
\]
where $\left[G_{i}\backslash N_{G}\left(P,G_{i}\right)/C_{G}\left(P\right)\right]$
is a choice of representatives of the double cosets of $G_{i}\backslash N_{G}\left(P,G_{i}\right)/C_{G}\left(P\right)$.
Since $P$ is a $p$-group we can deduce from the second Sylow theorem
that for every $x\in N_{G}\left(P,G_{i}\right)$ there exists $y\in G_{i}$
such that $yx\in N_{G}\left(P,S\right)$. We can therefore choose
the representatives $x\in\left[G_{i}\backslash N_{G}\left(P,G_{i}\right)/C_{G}\left(P\right)\right]$
satisfying $x\in N_{G}\left(P,S\right)$. For every $x\in\left[G_{i}\backslash N_{G}\left(P,G_{i}\right)/C_{G}\left(P\right)\right]$
fix a subset $T_{x}\subseteq C_{G}\left(P\right)$ such that $\overline{G_{i}}x=\bigsqcup_{z\in T_{x}}G_{i}xz$.
With this setup we have that for every $y\in N_{G}\left(P,G_{i}\right)$
there exists a unique $x_{y}\in\left[G_{i}\backslash N_{G}\left(P,G_{i}\right)/C_{G}\left(P\right)\right]$
and a unique $z_{y}\in T_{x_{y}}$ such that $G_{i}y=G_{i}x_{y}z_{y}$.
Since $x_{y}z_{y}\in N_{G}\left(P,S\right)$ we can define the cosets
\begin{align*}
\left(G_{i}y\right)_{S} & :=Sx_{y}z_{y}\in S\backslash N_{G}\left(P,S\right), & \left(G_{i}yC_{G}\left(P\right)\right)_{S} & :=Sx_{y}C_{G}\left(P\right)\in S\backslash N_{G}\left(P,S\right)/C_{G}\left(P\right).
\end{align*}
Using this notation we can now prove the following.
\begin{lem}
\label{lem:Classifying=000020space.}Let $G_{1},G_{2}$ be finite
groups, let $S$ be a $p$-group satisfying $S\in\text{Syl}_{p}\left(G_{i}\right)$
for $i=1,2$, define $G:=G_{1}*_{S}G_{2}$, let $P\le S$ be $p$-centric
(i.e. $Z\left(P\right)$ is a maximal $p$-subgroup of $C_{G}\left(P\right)$),
for every $Sx\in S\backslash N_{G}\left(P,S\right)$ and every $SyC_{G}\left(P\right)\in S\backslash N_{G}\left(P,S\right)/C_{G}\left(P\right)$
let $e_{Sx}$ and $e_{SyC_{G}\left(P\right)}$ be the corresponding
standard basis element of $\mathbb{R}^{\left|S\backslash N_{G}\left(P,S\right)\right|}$
and $\mathbb{R}^{\left|S\backslash N_{G}\left(P,S\right)/C_{G}\left(P\right)\right|}$
respectively and, with notation as above, define the subspaces $E_{P,G}\subseteq\mathbb{R}^{\left|S\backslash N_{G}\left(P,S\right)\right|}$
and $B_{P,G}\subseteq\mathbb{R}^{\left|S\backslash N_{G}\left(P,S\right)/C_{G}\left(P\right)\right|}$
by setting
\begin{align*}
E_{P,G} & =\left\{ te_{Sx}+\left(1-t\right)e_{\left(G_{i}x\right)_{S}}\,:\,t\in\left[0,1\right],\,x\in N_{G}\left(P,S\right)\text{and }\exists i\in\left\{ 1,2\right\} \right\} ,\\
B_{P,G} & =\left\{ te_{SxC_{G}\left(P\right)}+\left(1-t\right)e_{\left(G_{i}xC_{G}\left(P\right)\right)_{S}}\,:\,t\in\left[0,1\right],\,x\in N_{G}\left(P,S\right)\text{and }\exists i\in\left\{ 1,2\right\} \right\} .
\end{align*}
Then:
\begin{enumerate}
\item \label{enu:cover.}$E_{P,G}$ is weakly contractible and admits a
free action of $C_{G}\left(P\right)/Z\left(P\right)$.
\item \label{enu:classifying-space.}$B_{P,G}$ is the classifying space
of $C_{G}\left(P\right)/Z\left(P\right)$.
\end{enumerate}
\end{lem}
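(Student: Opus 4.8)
The plan is to realise $E_{P,G}$ and $B_{P,G}$ as models of the universal cover and the classifying space of $Q:=C_G(P)/Z(P)$, by passing through the Bass--Serre tree of the amalgam. Let $T$ be the Bass--Serre tree of $G=G_1*_SG_2$: its vertex set is $G/G_1\sqcup G/G_2$, its edge set is $G/S$, the edge $gS$ joins $gG_1$ and $gG_2$, and $G$ acts by left translation. Since $P$ is a finite $p$-group it has a fixed point on $T$, and in fact it fixes the edge $S$, so the fixed subtree $T^{P}$ is nonempty, hence contractible. Via the bijections $gG_i\leftrightarrow G_ig^{-1}$ and $gS\leftrightarrow Sg^{-1}$ one checks that the edges of $T^{P}$ are indexed by $S\backslash N_G(P,S)$, with the edge $Sw$ joining the type-$i$ vertex indexed by $G_iw$; thus the barycentric subdivision of $T^{P}$ is exactly the nerve of the poset $\boldsymbol C_P$ of Definition~\ref{def:TCGH} (for $n=2$, $\overline H=\{G_1,G_2\}$).

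For the weak contractibility of $E_{P,G}$ I would argue as follows. The choice of the transversals $T_x$ makes $\left(G_iy\right)_S$ depend only on the coset $G_iy$, hence only on $Sy$, so $Sw\mapsto\left(G_iw\right)_S$ is a well-defined self-map of $S\backslash N_G(P,S)$ selecting, for each type-$i$ vertex of $T^{P}$, a distinguished incident edge. Collapsing, in the barycentric subdivision of $T^{P}$ and for each vertex $v$, the half-edge from $v$ to its distinguished incident edge produces a graph whose vertices are precisely the $Sw$ and whose $1$-cells are precisely the segments $[e_{Sw},e_{(G_iw)_S}]$ — that is, $E_{P,G}$. The collapsed subgraph is a disjoint union of trees (each vertex of $T^{P}$ lies on a single collapsed half-edge), so collapsing it is a homotopy equivalence and $E_{P,G}$ is homotopy equivalent to the subdivision of $T^{P}$, hence to $|T^{P}|$, hence weakly contractible.

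Next, the free action. The $G$-action on $T$ restricts to an action of $C_G(P)$ on $T^{P}$ (it commutes with $P$), and $Z(P)\le P$ fixes $T^{P}$ pointwise, so $Q=C_G(P)/Z(P)$ acts on $T^{P}$ and hence on $E_{P,G}$. This is where $p$-centricity enters: the $C_G(P)$-stabiliser of an edge $Sw$ of $T^{P}$ is $C_G(P)\cap w^{-1}Sw = w^{-1}\bigl(C_G(wPw^{-1})\cap S\bigr)w$, and since $wPw^{-1}\le S$ is again $p$-centric, $Z(wPw^{-1})$ is the unique Sylow $p$-subgroup of $C_G(wPw^{-1})$, so the $p$-group $C_G(wPw^{-1})\cap S$ lies inside it; conjugating back, the stabiliser lies in $Z(P)$. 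Thus $Q$ acts freely on the vertex set $S\backslash N_G(P,S)$ of $E_{P,G}$, and since for each $1$-cell the two ends $e_{Sw}$ and $e_{(G_iw)_S}$ are distinguished from each other — one being the distinguished incident edge of its type-$i$ class, and the type-preserving action on $T$ forbidding a swap — no $1$-cell is reversed, so the action on $|E_{P,G}|$ is free. This gives (1); for (2), the linear map $\mathbb{R}^{|S\backslash N_G(P,S)|}\to\mathbb{R}^{|S\backslash N_G(P,S)/C_G(P)|}$ induced by $Sw\mapsto SwC_G(P)$ carries the generating segments of $E_{P,G}$ onto those of $B_{P,G}$ and restricts on $E_{P,G}$ to the orbit map, so $B_{P,G}\cong Q\backslash E_{P,G}$; a free cellular action of a discrete group on a weakly contractible CW complex has quotient a $K(Q,1)$, whence $B_{P,G}$ is a classifying space of $Q=C_G(P)/Z(P)$.

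The step I expect to be delicate is the compatibility between the combinatorially defined $E_{P,G}$ and the tree-theoretic model: the naive permutation action of $C_G(P)$ on $\mathbb{R}^{|S\backslash N_G(P,S)|}$ need not preserve $E_{P,G}$, because the transversals $T_x$ are not chosen equivariantly, so one has to transport the $C_G(P)$-action through the half-edge collapse, verify it is well defined on $E_{P,G}$, and then carefully justify the ``no reversed $1$-cell'' claim. The $p$-centric hypothesis is indispensable for the freeness, and hence for the classifying-space conclusion.
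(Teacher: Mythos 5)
Your route is at heart the same as the paper's: both proofs are the Bass--Serre theory for the amalgam in disguise (the paper cites Serre's \emph{Trees} and Robinson's amalgam lemma and constructs explicit paths and loop reductions by normal form, instead of invoking the tree $T$ by name), and your $p$-centricity argument for vertex-freeness -- the $C_G(P)$-stabiliser of $Sw$ is $C_G(P)\cap\lui{w^{-1}}{S}=\lui{w^{-1}}{C_S(\lui{w}{P})}=\lui{w^{-1}}{Z(\lui{w}{P})}=Z(P)$ -- is exactly the paper's. Realising $E_{P,G}$ as a half-edge collapse of the barycentric subdivision of $T^P$ is a cleaner packaging of the paper's hand-built contractibility argument.

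The gap you flag at the end is, however, a real one, and you should be aware that the paper's own proof does not close it either: it simply declares ``let $C_G(P)$ act on $E_{P,G}$ by $g\cdot e_{Sx}=e_{Sxg^{-1}}$'' and treats $B_{P,G}$ as the quotient. With the transversals $T_x$ as chosen this permutation action need not carry edges of $E_{P,G}$ into $E_{P,G}$. Writing $\left(G_ix\right)_S=Sx_xz_x$, one has $\left(G_ix\right)_Sg^{-1}=Sx_xz_xg^{-1}$ while $\left(G_ixg^{-1}\right)_S=Sx_xz_{xg^{-1}}$, and since $g\in C_G(P)$ the two elements $z_xg^{-1}$ and $z_{xg^{-1}}$ of $C_G(P)$ agree only modulo the stabiliser $x_x^{-1}G_ix_x\cap C_G(P)=\lui{x_x^{-1}}{C_{G_i}\left(\lui{x_x}{P}\right)}$, which contains $Z(P)$ as a proper subgroup whenever $C_{G_i}(\lui{x_x}{P})$ has $p'$-elements. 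In that case $Sx_xz_xg^{-1}\neq Sx_xz_{xg^{-1}}$ and the image $\left[e_{Sxg^{-1}},e_{\left(G_ix\right)_Sg^{-1}}\right]$ of the edge $\left[e_{Sx},e_{\left(G_ix\right)_S}\right]$ is not a segment of $E_{P,G}$. Your proposed fix of ``transporting the $C_G(P)$-action through the half-edge collapse'' will not resolve this on its own, because the collapse is defined using those same non-equivariant transversals and is therefore itself not $C_G(P)$-equivariant. To close the gap you should either (a) replace $E_{P,G}$ by the manifestly $C_G(P)$-invariant, homotopy-equivalent graph in which $Sx$ and $Sy$ are joined whenever $G_ix=G_iy$ for some $i$, check that $B_{P,G}$ is a deformation retract of the corresponding quotient, and then push the classifying-space conclusion back to $B_{P,G}$; or (b) drop the demand for a global $Q$-action on the specific model $E_{P,G}$ and instead verify directly, at the level of the fundamental groupoid, that the linear map $q\colon E_{P,G}\to B_{P,G}$, $e_{Sx}\mapsto e_{SxC_G(P)}$, is a regular covering with deck group $Q=C_G(P)/Z(P)$. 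Your ``no reversed $1$-cell'' heuristic also needs to be made precise under whichever repair you choose, since after the collapse the type dichotomy on $T^P$ no longer separates the two endpoints of a $1$-cell.
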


\begin{proof}
Let $C_{G}\left(P\right)$ act on $E_{P,G}$ by setting $g\cdot e_{Sx}=e_{Sxg^{-1}}$
for every $x\in N_{G}\left(P,S\right)$. From construction of $\left(G_{i}xC_{G}\left(P\right)\right)_{S}$
and $\left(G_{i}x\right)_{S}$ we have that $\left(G_{i}xC_{G}\left(P\right)\right)_{S}=\left(G_{i}x\right)_{S}C_{G}\left(P\right)$.
We can therefore conclude that $B_{P,G}$ is the quotient of $E_{P,G}$
by the defined $C_{G}\left(P\right)$ action. In particular Item (\ref{enu:classifying-space.})
follows from Item (\ref{enu:cover.}).

\medskip{}

To prove Item (\ref{enu:cover.}) let us start by proving that the
above described action of $C_{G}\left(P\right)$ on $E_{P,G}$ induces
a free action of $C_{G}\left(P\right)/Z\left(P\right)$. From the
above description we have that the action of an element $y\in C_{G}\left(P\right)$
fixes an element $e_{Sx}\in E_{P,G}$ if and only if $xy^{-1}x^{-1}\in S$
that is if and only if $xy^{-1}x^{-1}\in C_{S}\left(\lui{x}{P}\right)$.
Since $\lui{x}{P}\le S$ is $p$-centric and $S$ is a $p$-group
we can deduce that $C_{S}\left(\lui{x}{P}\right)=Z\left(\lui{x}{P}\right)$.
In other words the action of $y$ fixes $e_{Sx}$ if and only if $y\in Z\left(P\right)$.
We can therefore conclude that the action of $C_{G}\left(P\right)$
on $E_{P,G}$ defines a free action of $C_{G}\left(P\right)/Z\left(P\right)$
on $E_{P,G}$.

\smallskip{}

We are now only left with proving that $E_{P,G}$ is weakly contractible.
Let's start by proving that $E_{P,G}$ is path connected. For every
$x\in N_{G}\left(P,S\right)$, every $i=1,2$ and every $y\in G_{i}$
such that $yx\in N_{G}\left(P,S\right)$ define the path $\pi_{Sx}^{Syx}:\left[0,1\right]\to E_{P,G}$
by setting 
\[
\pi_{Sx}^{Syx}\left(t\right)=\begin{cases}
\left(1-2t\right)e_{Sx}+2te_{\left(G_{i}x\right)_{3}} & \text{if }0\le t\le\frac{1}{2}\\
2\left(1-t\right)e_{\left(G_{i}yx\right)_{3}}+\left(2t-1\right)e_{Syx} & \text{if }\frac{1}{2}<t\le1
\end{cases}.
\]

Let $n\ge1$ and let $\left(a_{1},\dots,a_{n}\right)$ be a sequence
of elements satisfying $a_{i}\in G_{1}\cup G_{2}$ and $a_{i}\cdots a_{1}x\in N_{G}\left(P,S\right)$
for every $i=1,\dots,n$. We denote by $\pi_{Sx}^{\left(a_{1},\dots,a_{n}\right)}$
the path from $e_{Sx}$ to $e_{Sa_{i}\cdots a_{1}x}$ in $E_{P,G}$
obtained by ``composing'' the paths $\pi_{Sa_{i-1}\cdots a_{1}x}^{Sa_{i}\cdots a_{1}x}$
and by $\pi_{Sx}^{\left(\right)}$ the constant path to $e_{Sx}$.
It is immediate from construction of $E_{P,G}$ that every path in
$E_{P,G}$ from $e_{Sx}$ to $e_{Sa_{n}\cdots a_{1}x}$ is homotopy
equivalent to a path of the form $\pi_{Sx}^{\left(a_{1},\dots,a_{n}\right)}$.

Let $x\in N_{G}\left(P,S\right)$. From \cite[Theorem 1]{TreesSerre}
we know that there exists $n\ge0$, an element $a_{0}^{2}\in G_{2}$,
an element $a_{n+1}^{1}\in G_{1}$ and for every $i=1,2$ and every
$j=1,\dots,n$ an element $a_{j}^{i}\in G_{i}-S$ such that $x=a_{n+1}^{1}a_{n}^{2}a_{n}^{1}a_{n-1}^{2}\cdots a_{1}^{1}a_{0}^{2}$.
Since $S\le G_{1},G_{2}$ we can now deduce from \cite[Lemma 1]{RobinsonAmalgams}
that $a_{0}^{2}\in N_{G}\left(P,S\right)$, that $a_{j}^{2}a_{j}^{1}\cdots a_{1}^{1}a_{0}^{2}\in N_{G}\left(P,S\right)$
and that $a_{j}^{1}\cdots a_{1}^{1}a_{0}^{2}\in N_{G}\left(P,S\right)$
for every $j=1,\dots,n$. It follows that $\pi_{S}^{\left(a_{0}^{2},\dots,a_{n+1}^{1}\right)}$
is a path from $e_{S}$ to $e_{Sx}$ in $E_{P,G}$. Since this reasoning
is valid for every $x\in N_{G}\left(P,S\right)$ we can conclude that
$E_{P,G}$ is path connected.

Let's now prove that the fundamental group of $E_{P,G}$ is trivial.
From the above discussion we know that every loop in $E_{P,G}$ is
homotopy equivalent to a loop of the form $\pi_{S}^{\left(a_{1},\dots,a_{n}\right)}$
with $a_{n}\cdots a_{1}\in S$. If there exists $i\in\left\{ 1,\dots,n\right\} $
such that $a_{i}\in S$ then the path $\pi_{S}^{\left(a_{1},\dots,a_{n}\right)}$
if homotopy equivalent to the path $\pi_{S}^{\left(a_{1},\dots,a_{i-1},a_{i+1},\dots,a_{n}\right)}$.
Moreover, if there exist $i\in\left\{ 1,\dots,n-1\right\} $ and $j\in\left\{ 1,2\right\} $
such that $a_{i},a_{i+1}\in G_{j}$ then the path $\pi_{S}^{\left(a_{1},\dots,a_{n}\right)}$
if homotopy equivalent to the path $\pi_{S}^{\left(a_{1},\dots,a_{i}a_{i+1},\dots,a_{n}\right)}$.
Combining both these observations we can assume without loss of generality
that the $a_{i}$ are such that $a_{i}\in\left(G_{1}\cup G_{2}\right)-S$
and $a_{i}\in G_{j}$ for some $j\in\left\{ 1,2\right\} $ if and
only if $a_{i+1}\in G_{3-j}$.

For every $j=1,2$ fix representatives $\left[S\backslash G_{j}\right]$
of the cosets in $S\backslash G_{j}$. We can assume without loss
of generality that for every $i=1,\dots,n$ we have $a_{i}\in\left[S\backslash G_{j}\right]$
for some $j\in\left\{ 1,2\right\} $. Since $a_{n}\cdots a_{1}\in S$
we can conclude from \cite[Theorem 1]{TreesSerre} that $\pi_{S}^{\left(a_{1},\dots,a_{n}\right)}=\pi_{S}^{\left(\right)}$
is the constant path. Therefore the fundamental group of $E_{P,G}$
is trivial.

Finally it is clear from definition of $E_{P,G}$ that every higher
homotopy group of $E_{P,G}$ is trivial. We can therefore conclude
that $E_{P,G}$ is weakly contractible thus proving Item (\ref{enu:cover.})
and completing the proof.
\end{proof}
We can now finally provide a description of the homology groups of
$\TTCGH{G}{\overline{H}}{C}\left(P\right)$ and, therefore, of $C_{G}\left(P\right)\backslash\left(\hocolim_{\Dn}\left(\NCGH{G}{\overline{H}}{\overline{C}}\right)\right)^{P}$.
\begin{prop}
\label{prop:TTCGH-is-BGCP}Let $G_{1},G_{2}$ be finite groups, let
$S$ be a $p$-group satisfying $S\in\text{Syl}_{p}\left(G_{i}\right)$
for $i=1,2$, define $G:=G_{1}*_{S}G_{2}$, define $\F:=\FSG$, let
$\overline{\mathcal{C}}$ be the collection of all $p$-centric subgroups
of $G$, let $\mathcal{C}$ be the collection of all $\F$-centric
subgroups of $S$ and let $\overline{H}:=\left\{ G_{1},G_{2}\right\} $.
For every $P\in\mathcal{C}$ the topological space $\TTCGH{G}{\overline{H}}{C}\left(P\right)$
is weak equivalent to the classifying space of $C_{G}\left(P\right)/Z\left(P\right)$.
\end{prop}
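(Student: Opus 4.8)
The plan is to deduce the statement from Lemma~\ref{lem:Classifying=000020space.}, for which it suffices to identify $\TCGH{G}{\overline{H}}{\overline{C}}(P)$, $C_G(P)$-equivariantly up to weak equivalence, with the space $E_{P,G}$ constructed there, and then pass to $C_G(P)$-orbits. I would begin from the identification supplied by Lemma~\ref{lem:weak-equiv-hocolim.}: $\TTCGH{G}{\overline{H}}{C}(P)$ is naturally weakly equivalent to $C_G(P)\backslash\big(\hocolim_{\Dt}\NCGH{G}{\overline{H}}{\overline{C}}\big)^P$. Since $P$ is finite, $P$-fixed points commute with the homotopy colimit over $\Dt$, so $\big(\hocolim_{\Dt}\NCGH{G}{\overline{H}}{\overline{C}}\big)^P$ is, as a $C_G(P)$-space, the homotopy pushout of
\[
\big(\NCGH{G}{G_1}{\overline{C}}\big)^P\longleftarrow\big(\NCGH{G}{S}{\overline{C}}\big)^P\longrightarrow\big(\NCGH{G}{G_2}{\overline{C}}\big)^P ,
\]
where I have used that $G_1\cap G_2=S$ in the amalgam $G=G_1*_SG_2$, so that $H_{\{1,2\}}=S$. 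Both this space and $E_{P,G}$ are weakly contractible: the former because $P$, being finite, fixes a vertex of the Bass--Serre tree of $G=G_1*_SG_2$, so the pushout is the contractible fixed subtree, and the latter by Lemma~\ref{lem:Classifying=000020space.}(\ref{enu:cover.}); the real content of the comparison is therefore entirely equivariant.

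Next I would feed in the computations of Section~\ref{sec:Dwyer-spaces}. The collection $\overline{\mathcal{C}}$ of $p$-centric subgroups of $G$ is closed under $G$-conjugation and (on the relevant part) under products and contains $P$, so by Lemmas~\ref{lem:first-weak-equivalence.}--\ref{lem:weak-equiv-to-discrete-points.} each $\big(\NCGH{G}{H}{\overline{C}}\big)^P$ with $H\in\{G_1,G_2,S\}$ is $N_G(P)$-weakly equivalent to the discrete $N_G(P)$-set $H\backslash N_G(P,H)$, with $C_G(P)$ acting by right translation; and because $S\in\Syl_p(G_i)$ and $P$ is a $p$-group, the second Sylow theorem lets one choose representatives for these cosets inside $N_G(P,S)$. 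Organising the double mapping cylinder of the resulting discrete $C_G(P)$-sets around these preferred representatives produces exactly the complex $E_{P,G}$ of Lemma~\ref{lem:Classifying=000020space.}: its vertices $e_{Sx}$ are indexed by $S\backslash N_G(P,S)$, and for $i=1,2$ each $e_{Sx}$ is joined to $e_{(G_{i}x)_{S}}$, the preferred $S$-coset lying in $G_ix$. Granting that this matching is $C_G(P)$-equivariant, taking $C_G(P)$-orbits yields $\TTCGH{G}{\overline{H}}{C}(P)\simeq C_G(P)\backslash E_{P,G}=B_{P,G}$, which by Lemma~\ref{lem:Classifying=000020space.}(\ref{enu:classifying-space.}) is the classifying space of $C_G(P)/Z(P)$.

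The delicate point — and the only place where the standing hypotheses are genuinely used — is precisely this $C_G(P)$-equivariant identification with the model $E_{P,G}$ rather than with the tautological model, the $P$-fixed subtree of the Bass--Serre tree of $G$ carrying its evident $C_G(P)$-action: there the stabilizer of a ``type-$i$'' vertex is $C_{x^{-1}G_ix}(P)$, which need not reduce to $Z(P)$, so one cannot simply quotient that model. The hypothesis that $P$ is $\F$-centric — equivalently $p$-centric in $G$ — is what guarantees $C_S(\lui{x}{P})=Z(\lui{x}{P})$ for every $x\in N_G(P,S)$ (the computation already carried out inside the proof of Lemma~\ref{lem:Classifying=000020space.}), hence that $C_G(P)$ acts freely modulo $Z(P)$ on the edge-set $S\backslash N_G(P,S)$; together with the tree structure of $G$ coming from Serre's theory (\cite{TreesSerre}), this is what makes $E_{P,G}$ weakly contractible with a free $C_G(P)/Z(P)$-action and lets $C_G(P)\backslash E_{P,G}$ serve as the value of $\TTCGH{G}{\overline{H}}{C}$ at $P$. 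So the bulk of the work is the careful bookkeeping showing that the fixed-point homotopy pushout produced by Lemmas~\ref{lem:weak-equiv-hocolim.} and \ref{lem:first-weak-equivalence.}--\ref{lem:weak-equiv-to-discrete-points.} agrees, $C_G(P)$-equivariantly, with $E_{P,G}$.
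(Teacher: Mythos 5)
Your plan is to identify the $P$-fixed homotopy pushout $C_G(P)$-equivariantly with the complex $E_{P,G}$ of Lemma~\ref{lem:Classifying=000020space.} and then pass to orbits. That equivariant identification is in fact false, and this is the genuine gap. When $P$ is $\F$-centric, every vertex $e_{Sx}$ of $E_{P,G}$ has $C_G(P)$-stabilizer exactly $Z(P)$ (this is the computation inside the proof of Lemma~\ref{lem:Classifying=000020space.}, using $C_S({}^xP)=Z({}^xP)$). By contrast, the ``outer'' vertices of your double mapping cylinder --- the points of $G_i\backslash N_G(P,G_i)$ --- have $C_G(P)$-stabilizer $C_G(P)\cap x^{-1}G_ix=x^{-1}C_{G_i}({}^xP)\,x$, and $p$-centricity only says $Z({}^xP)\in\Syl_p\big(C_{G_i}({}^xP)\big)$, not that the centralizer is $Z({}^xP)$ itself; the $p'$-part of $C_{G_i}({}^xP)$ is generically nontrivial. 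So the two $C_G(P)$-spaces have different isotropy data and cannot be $C_G(P)$-equivariantly weakly equivalent, regardless of how one ``organises the preferred representatives'' --- indeed those representative choices are themselves not $C_G(P)$-invariant. Your closing remark that the bulk of the work is this equivariant bookkeeping therefore describes a step that would not succeed.

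The paper sidesteps this entirely by never comparing the covering spaces equivariantly. Note that by Definition~\ref{def:TCGH} the space $\TTCGH{G}{\overline{H}}{C}(P)$ \emph{is} the $C_G(P)$-quotient of your double mapping cylinder, i.e.\ it is on the nose the realization of the bipartite category $\boldsymbol{C}$ whose objects are $(S,SxC_G(P))$ and $(G_i,G_ixC_G(P))$. Calling that realization $B'$, the paper proves $B'\simeq B_{P,G}$ by a purely non-equivariant move (the ``blow up'', which collapses, for each $G_i$-double-coset, the edge joining $e_{G_ixC_G(P)}$ to the chosen representative $e_{S x_x C_G(P)}$), and then invokes Lemma~\ref{lem:Classifying=000020space.}(\ref{enu:classifying-space.}), which is itself a statement about $B_{P,G}$, not about $E_{P,G}$ being a specific equivariant model for the fixed-point pushout. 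The role of $E_{P,G}$ in the paper is solely to exhibit $B_{P,G}$ as a $K(C_G(P)/Z(P),1)$ by producing \emph{some} weakly contractible free cover; there is no claim, and no need, that $E_{P,G}$ coincides equivariantly with $\TCGH{G}{\overline{H}}{\overline{C}}(P)$. Your proof should therefore be recast to compare the quotients $B'$ and $B_{P,G}$ directly, rather than to manufacture an equivariant equivalence upstairs.

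Two smaller points. First, ``$P$-fixed points commute with homotopy colimits'' is not automatic; what makes this work here is that the homotopy colimit is the realization of the nerve of a transport category which is a simplicial $G$-set, and the paper's Lemma~\ref{lem:weak-equiv-hocolim.} is exactly the statement you need, so you should cite it as doing this work rather than re-derive it. Second, the claim that the $P$-fixed pushout ``is the contractible fixed subtree of the Bass--Serre tree'' conflates $\hocolim_{\Dt}\NCGH{G}{\overline{H}}{\overline{C}}$ with the Bass--Serre tree of the amalgam; they are not the same space (the former carries the data of the collection $\overline{\mathcal{C}}$), although after the reductions of Lemmas~\ref{lem:first-weak-equivalence.}--\ref{lem:weak-equiv-to-discrete-points.} the fixed points do become equivalent to a graph. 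Contractibility of the relevant graph is not needed by the paper's route; what is needed is the weak contractibility of $E_{P,G}$, which Lemma~\ref{lem:Classifying=000020space.}(\ref{enu:cover.}) establishes by an explicit path-and-reduced-word argument using \cite[Theorem 1]{TreesSerre}.
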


\begin{proof}
Fix $P\in\mathcal{C}$, define $G_{3}:=S$, define $N:=\sum_{i=1}^{3}\left|G_{i}\backslash N_{G}\left(P,G_{i}\right)/C_{G}\left(P\right)\right|$
and view the topological space $B_{P,G}$ of Lemma \ref{lem:Classifying=000020space.}
as a subspace of $\mathbb{R}^{N}$. By ``blowing up'' on the points
$e_{\left(G_{i}xC_{G}\left(P\right)\right)_{3}}\in B_{P,G}$ we obtain
that $B_{P,G}$ is homotopy equivalent to the subspace $B'$ of $\mathbb{R}^{N}$
defined by setting
\[
B'=\left\{ te_{SxC_{G}\left(P\right)}+\left(1-t\right)e_{G_{i}xC_{G}\left(P\right)}\,:\,t\in\left[0,1\right],\,x\in N_{G}\left(P,S\right)\text{and }\exists i\in\left\{ 1,2\right\} \right\} .
\]
Let $\boldsymbol{C}$ be the category with objects tuples of the form
$\left(H,HxC_{G}\left(P\right)\right)$ for some $H\in\left\{ S,G_{1},G_{2}\right\} $
and $x\in N_{G}\left(P,S\right)$ and whose only morphisms are the
identities and a unique morphisms from $\left(S,SxC_{G}\left(P\right)\right)$
to $\left(G_{i},G_{i}yC_{G}\left(P\right)\right)$ if and only if
$SxC_{G}\left(P\right)\subseteq G_{i}yC_{G}\left(P\right)$. It is
immediate from definition that, given two composable morphisms in
$\boldsymbol{C}$, then at least one of them is the identity. We can
deduce that $B'$ is weak equivalent to the geometric realization
of $\boldsymbol{C}$.

On the other hand, since $S\in\Syl_{p}\left(G_{i}\right)$ for $i=1,2$,
then, for every double coset $X\in G_{i}\backslash N_{G}\left(P,G_{i}\right)/C_{G}\left(P\right)$,
there exists $x\in N_{G}\left(P,S\right)$ such that $X=G_{i}xC_{G}\left(P\right)$.
We can therefore conclude that $\TTCGH{G}{\overline{H}}{C}\left(P\right)$
is the geometric realization of $\boldsymbol{C}$ and, therefore,
weak equivalent to $B'$ which in turn is weak equivalent to $B_{P,G}$.
The result follows from Lemma \ref{lem:Classifying=000020space.}.
\end{proof}
\begin{cor}
\label{cor:homology-TTCGH}With notation as in Proposition \ref{prop:TTCGH-is-BGCP},
for every commutative ring $\R$ and every $P\in\mathcal{C}$ we have
that:

\[
H_{n}\left(\TTCGH{G}{\overline{H}}{C}\left(P\right),\R\right)=\begin{cases}
\R & \text{if }n=0\\
\text{Ab}\left(C_{G}\left(P\right)/Z\left(P\right)\right)\otimes\R & \text{if }n=1\\
0 & \text{else}
\end{cases}
\]
where $\text{Ab}\left(H\right)$ denotes the abelianization of $H$
for every group $H$. More precisely $H_{0}\left(\TTCGH{G}{\overline{H}}{C}\left(?\right),\R\right)=\underline{\R}$.
\end{cor}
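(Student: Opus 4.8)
The plan is to deduce everything from Proposition \ref{prop:TTCGH-is-BGCP} together with the explicit $1$-dimensional model for the classifying space furnished by Lemma \ref{lem:Classifying=000020space.}. Fix $P\in\mathcal{C}$. By Lemma \ref{lem:Classifying=000020space.}(\ref{enu:cover.}) the group $C_{G}\left(P\right)/Z\left(P\right)$ acts freely on the weakly contractible $1$-dimensional CW complex $E_{P,G}$; a weakly contractible connected graph is a tree, and a group acting freely on a tree is free (see \cite{TreesSerre}), so $C_{G}\left(P\right)/Z\left(P\right)$ is a free group. Consequently its classifying space has the homotopy type of a wedge of circles, one for each free generator (in particular a single point when the group is trivial). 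By Proposition \ref{prop:TTCGH-is-BGCP} the space $\TTCGH{G}{\overline{H}}{C}\left(P\right)$ is weak equivalent to this classifying space, hence to a nonempty wedge of circles, and in particular it is path connected.

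From the homotopy type of a wedge of circles the homology computation is immediate: $H_{0}\left(\TTCGH{G}{\overline{H}}{C}\left(P\right),\R\right)\cong\R$, $H_{n}\left(\TTCGH{G}{\overline{H}}{C}\left(P\right),\R\right)=0$ for $n\ge2$, and $H_{1}\left(\TTCGH{G}{\overline{H}}{C}\left(P\right),\R\right)$ is free of rank equal to the number of free generators of $C_{G}\left(P\right)/Z\left(P\right)$. To identify this last group with $\text{Ab}\left(C_{G}\left(P\right)/Z\left(P\right)\right)\otimes\R$ I would invoke the Hurewicz theorem, which (using that $E_{P,G}\to B_{P,G}$ is a universal cover, so $\pi_{1}\TTCGH{G}{\overline{H}}{C}\left(P\right)\cong C_{G}\left(P\right)/Z\left(P\right)$) gives $H_{1}\left(\TTCGH{G}{\overline{H}}{C}\left(P\right),\Z\right)\cong\text{Ab}\left(C_{G}\left(P\right)/Z\left(P\right)\right)$, followed by the universal coefficient theorem, whose $\mathrm{Tor}$ summand vanishes because $H_{0}\left(\TTCGH{G}{\overline{H}}{C}\left(P\right),\Z\right)\cong\Z$ is free. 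This yields the three cases of the statement.

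Finally, for the functorial assertion $H_{0}\left(\TTCGH{G}{\overline{H}}{C}\left(?\right),\R\right)=\underline{\R}$: every morphism of $\OFC$ induces a map between the nonempty path-connected spaces $\TTCGH{G}{\overline{H}}{C}\left(?\right)$, and any such map sends the class of a point to the class of a point, hence induces the identity under the canonical identification $H_{0}\left(-,\R\right)\cong\R$; thus the functor $H_{0}\left(\TTCGH{G}{\overline{H}}{C}\left(?\right),\R\right)$ is the constant functor $\underline{\R}$. I do not expect any genuine obstacle here: the only point requiring care is that we use the specific $1$-dimensional model $E_{P,G}$ of Lemma \ref{lem:Classifying=000020space.} (rather than an arbitrary classifying space) precisely in order to conclude that $C_{G}\left(P\right)/Z\left(P\right)$ is free, and hence that the homology vanishes above degree $1$.
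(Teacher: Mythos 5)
Your proof is correct, and it follows the same essential route as the paper's: reduce to Proposition \ref{prop:TTCGH-is-BGCP}, compute the homology of the classifying space $B\left(C_{G}\left(P\right)/Z\left(P\right)\right)$, and pass to $\R$-coefficients via the universal coefficient theorem. The difference is in how the vanishing of $H_{n}$ for $n\ge2$ and the identification of $H_{1}$ are justified. The paper's proof is terse and simply reads off the homology, implicitly relying on the fact that the explicit model $B_{P,G}$ built in Lemma \ref{lem:Classifying=000020space.} is a one-dimensional complex. You make this precise, and indeed sharper, by observing that $E_{P,G}$ is a simply connected graph, hence a tree, so that by Bass--Serre theory the free action of $C_{G}\left(P\right)/Z\left(P\right)$ on it forces the group to be free and its classifying space to be (up to homotopy) a wedge of circles. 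That cleanly gives all three homology groups at once, together with the Hurewicz identification of $H_{1}$ and the degeneracy of the $\mathrm{Tor}$ term in the universal coefficient theorem. The freeness of $C_{G}\left(P\right)/Z\left(P\right)$ is a genuine structural observation that the paper leaves implicit; it is the conceptual reason the homology dies above degree one, and spelling it out as you do would be a worthwhile addition to the exposition.
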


\begin{proof}
From Lemma \ref{prop:TTCGH-is-BGCP} we know that 
\begin{align*}
H_{0}\left(\TTCGH{G}{\overline{H}}{C}\left(?\right),\R\right) & =\underline{\R},\\
H_{1}\left(\TTCGH{G}{\overline{H}}{C}\left(P\right),\Z\right) & =\text{Ab}\left(C_{G}\left(P\right)/Z\left(P\right)\right),\\
H_{n}\left(\TTCGH{G}{\overline{H}}{C}\left(P\right),\R\right) & =0\text{ for }n\ge2.
\end{align*}
Since $H_{0}\left(\TTCGH{G}{\overline{H}}{C}\left(P\right),\R\right)\cong\R$
is a free $\R$-module the result follows from the universal coefficients
theorem for homology (see \cite[Theorem 3A.3 and Proposition 3A.5]{HatcherAlgebraicTopology}).
\end{proof}
In light of the result shown in Corollary \ref{cor:homology-TTCGH}
it is now useful to introduce the following notation
\begin{defn}
\label{def:CGpC}With notation as in Corollary \ref{cor:homology-TTCGH}
we denote by $\CGpC:\OFC^{\text{op}}\to\R\text{-mod}$ the composition
$\OFC^{\text{op}}\stackrel{\TTCGH{G}{\overline{H}}{C}}{\longrightarrow}\text{Top}\stackrel{H_{1}\left(-;\R\right)}{\longrightarrow}\R\text{-mod}$.
\end{defn}

Corollary \ref{cor:homology-TTCGH} is very useful towards computing
the fusion Bredon cohomology of $\TTCGH{G}{\overline{H}}{C}$
\begin{defn}[{\cite[Definition 1.5]{HigherLimitsOverTheFusionOrbitCategory}}]
\label{def:fusion-Bredon-cohomology}Let $\R$ be a commutative ring,
let $S$ be a finite $p$-group, let $\mathcal{C}$ be a collection
of subgroups of $S$, let $\F$ be a fusion system over $S$, let
$G$ be a group realizing $S$ (i.e. $\F=\FSG$), let $X$ be a $G$
simplicial set, let $P_{*,*}$ be the Cartan-Eilenberg projective
resolution of the chain complex $C_{*}\left(C_{G}\left(?\right)\backslash\left|X\right|^{?};\R\right)$
and let $M$ be an $\R\OFC$-module. We define the \textbf{fusion
Bredon cohomology} of $\left|X\right|$ with coefficients in $M$
as
\[
H_{\OFC}^{*}\left(\left|X\right|;M\right):=H^{*}\left(\Hom_{\R\OFC}\left(\Tot^{\oplus}\left(P_{*,*}\right);M\right)\right).
\]
\end{defn}

In certain cases the fusion Bredon cohomology might be more easily
described in terms of limits.
\begin{lem}
\label{lem:iso-Bredon-cohomology-H}Let $\R$ be a commutative ring,
let $n\ge0$ be an integer, let $S$ be a finite $p$-group, let $\F$
be a (non necessarily saturated) fusion system over $S$, let $G$
be a group realizing $\F$, let $H\le G$, let $\overline{\mathcal{C}}$
be any family of subgroups of $G$ closed under $G$ conjugation and
taking products, let $\mathcal{C}:=\left\{ P\in\overline{\mathcal{C}}\,:\,P\le S\right\} $
and let $\mathcal{C}_{H}:=\left\{ P\in\overline{\mathcal{C}}\,:\,P\le S\cap H\right\} $.
Assume that for every $P\le H$ in $\overline{\mathcal{C}}$ there
exists $x\in H$ satisfying $\lui{x}{P}\in\mathcal{C}_{H}$. For every
$\R\OFC$-module $M$ the following isomorphism holds
\[
\limn_{\OFD{\FAB{H\cap S}{H}}{\mathcal{C}}}\left(M\downarrow_{\OFD{\FAB{H\cap S}{H}}{\mathcal{C}}}^{\OFC}\right)\cong H_{\OFC}^{s+t}\left(\NCGH{G}{H}{\overline{C}};M\right).
\]
\end{lem}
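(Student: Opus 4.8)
The plan is to run the hyper-cohomology spectral sequence attached to the definition of fusion Bredon cohomology and show that it degenerates at the second page. Write $X:=\NCGH{G}{H}{\overline{C}}$, which is a $G$-simplicial set (Definition \ref{def:NCGH}). By \cite[Proposition 5.7]{HigherLimitsOverTheFusionOrbitCategory}—equivalently, the second spectral sequence of the Cartan–Eilenberg resolution used in Definition \ref{def:fusion-Bredon-cohomology}—there is a first quadrant cohomological spectral sequence
\[
E_{2}^{s,t}=\Ext_{\R\OFC}^{s}\left(H_{t}\left(C_{G}\left(?\right)\backslash\left|X\right|^{?};\R\right),M\right)\Rightarrow H_{\OFC}^{s+t}\left(\NCGH{G}{H}{\overline{C}};M\right),
\]
where $H_{t}\left(C_{G}\left(?\right)\backslash\left|X\right|^{?};\R\right)$ denotes the $\R\OFC$-module sending $P\in\mathcal{C}$ to $H_{t}\left(C_{G}\left(P\right)\backslash\left|X\right|^{P};\R\right)$.

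First I would identify the $E_{2}$-page with the functors of Section \ref{sec:Dwyer-spaces}. Since $P$ is a finite $p$-group, passage to $P$-fixed points is a finite limit and hence commutes with geometric realization, so $\left|X\right|^{P}=\left|X^{P}\right|$; as realization also commutes with quotients by group actions, there is a natural homeomorphism $C_{G}\left(P\right)\backslash\left|X\right|^{P}\cong\left|C_{G}\left(P\right)\backslash X^{P}\right|=\left|\NNCGH{G}{H}{C}\left(P\right)\right|$ (see Definition \ref{def:NNCGH}). Therefore $H_{t}\left(C_{G}\left(?\right)\backslash\left|X\right|^{?};\R\right)$ is naturally isomorphic, as an $\R\OFC$-module, to the functor $\HGnHR{t}{\R}{H}$ of Definition \ref{def:Homology-funct-to-top.}, so that $E_{2}^{s,t}=\Ext_{\R\OFC}^{s}\left(\HGnHR{t}{\R}{H},M\right)$. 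The hypothesis of the present Lemma is exactly that of Proposition \ref{prop:Hn-and-const-functor.} and Corollary \ref{cor:vanishing-ext}, so the latter applies and yields
\[
E_{2}^{s,t}=\begin{cases}
\limn[s]_{\OFD{\FAB{H\cap S}{H}}{\mathcal{C}}}\left(M\downarrow_{\OFD{\FAB{H\cap S}{H}}{\mathcal{C}}}^{\OFC}\right) & \text{if }t=0,\\
0 & \text{if }t\ge1.
\end{cases}
\]

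It follows that the spectral sequence is concentrated in the row $t=0$, hence every differential into or out of that row vanishes, there are no extension problems, and $H_{\OFC}^{n}\left(\NCGH{G}{H}{\overline{C}};M\right)\cong E_{2}^{n,0}\cong\limn[n]_{\OFD{\FAB{H\cap S}{H}}{\mathcal{C}}}\left(M\downarrow_{\OFD{\FAB{H\cap S}{H}}{\mathcal{C}}}^{\OFC}\right)$ for every $n\ge0$, which is the claimed isomorphism. The step needing the most care is the middle one: the homeomorphism $C_{G}\left(P\right)\backslash\left|X\right|^{P}\cong\left|\NNCGH{G}{H}{C}\left(P\right)\right|$ must be made natural in $P\in\OFC^{\text{op}}$, so that it upgrades to an isomorphism of $\R\OFC$-modules $H_{t}\left(C_{G}\left(?\right)\backslash\left|X\right|^{?};\R\right)\cong\HGnHR{t}{\R}{H}$ compatible with the spectral sequence, and one must verify that the running hypothesis genuinely feeds Corollary \ref{cor:vanishing-ext}. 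Both amount to bookkeeping, since the essential vanishing input has already been established in Section \ref{sec:Dwyer-spaces}; I do not expect any further obstacle.
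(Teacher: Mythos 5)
Your proof is correct and follows essentially the same route as the paper: invoke the spectral sequence of \cite[Proposition 5.7]{HigherLimitsOverTheFusionOrbitCategory} with $E_2^{s,t}=\Ext_{\R\OFC}^{s}\left(\HGnHR{t}{\R}{H},M\right)$ and then apply Corollary \ref{cor:vanishing-ext} to collapse it to the $t=0$ row. The only difference is that you spell out the identification of the $E_2$-term with $\HGnHR{t}{\R}{H}$ via the homeomorphism $C_{G}(P)\backslash|X|^{P}\cong|C_{G}(P)\backslash X^{P}|$, a point the paper leaves implicit in its statement of the spectral sequence.
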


\begin{proof}
From \cite[Proposition 5.7]{HigherLimitsOverTheFusionOrbitCategory}
we know that there exists a spectral sequence of the form
\[
\Ext_{\R\OFC}^{s}\left(\HGnHR{t}{\R}{H},M\right)\Rightarrow H_{\OFC}^{s+t}\left(\NCGH{G}{H}{\overline{C}};M\right).
\]
The result follows from Corollary \ref{cor:vanishing-ext}.
\end{proof}
As a consequence of Lemma \ref{lem:iso-Bredon-cohomology-H} we obtain
the following spectral sequence.
\begin{lem}
\label{lem:ss-Bredon-cohomology-hocolim}Let $\R$ be a commutative
ring, let $S$ be a finite $p$-group, let $\F$ be a saturated fusion
system over $S$, let $G$ be a group realizing $\F$, let $\overline{\mathcal{C}}$
be any family of subgroups of $G$ closed under $G$ conjugation and
taking products, let $\mathcal{C}:=\left\{ P\in\overline{\mathcal{C}}\,:\,P\le S\right\} $,
let $\overline{H}:=\left\{ G_{1},G_{2}\right\} $ be a family of subgroups
of $G$ such that $S\le G_{i}$ for every $i=1,2$ and define the
(non necessarily saturated) fusion systems $\F_{\left\{ i\right\} }:=\FAB{S}{G_{i}}$
and $\F_{\left\{ 1,2\right\} }:=\FS$. Assume that for every $P\le G_{i}$
in $\overline{\mathcal{C}}$ there exists $x\in G_{i}$ such that
$\lui{x}{P}\le\mathcal{C}$. For every $\R\OFC$ module $M$ there
exists a spectral sequence of the form
\[
\limn[s]_{\Dt}\left(\limn[t]_{\OFC[\F_{?}]}\left(M\downarrow_{\OFC[\F_{?}]}^{\OFC}\right)\right)\Rightarrow H_{\OFC}^{n}\left(\hocolim_{\Dt}\left(\NCGH{G}{\overline{H}}{\overline{C}}\right);M\right).
\]
\end{lem}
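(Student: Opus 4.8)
The plan is to compute the fusion Bredon cohomology of $\hocolim_{\Dt}(\NCGH{G}{\overline{H}}{\overline{C}})$ through the double complex of $\R\OFC$-modules coming from the bar construction over $\Dt$, and to run the hyper-$\Ext$ spectral sequence of that double complex in the order that resolves the induced‑Dwyer‑space direction first, identifying the inner term via Lemma \ref{lem:iso-Bredon-cohomology-H}. Write $H_{I}:=\bigcap_{i\in I}G_{i}$, so that $H_{\{1\}}=G_{1}$, $H_{\{2\}}=G_{2}$, $H_{\{1,2\}}=S$ and $\F_{I}:=\FAB{S}{H_{I}}$ (thus $\F_{\{i\}}=\FAB{S}{G_{i}}$ and $\F_{\{1,2\}}=\FS$). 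First I would observe, using Lemma \ref{lem:weak-equiv-hocolim.} together with the description of $\TTCGH{G}{\overline{H}}{C}$ in Definition \ref{def:TCGH} (the nerve of $\boldsymbol{C}_{?}$ is the homotopy colimit over $\Dt$ of the nerves of its fibres, which by Lemma \ref{lem:fusion-weak-equivalent-to-discrete-points.} are the discrete sets underlying $\NNCGH{G}{H_{I}}{C}(?)$), that $P\mapsto C_{G}(P)\backslash\bigl|\hocolim_{\Dt}(\NCGH{G}{\overline{H}}{\overline{C}})\bigr|^{P}$ is naturally weak equivalent to $\hocolim_{\Dt}\bigl(I\mapsto\NNCGH{G}{H_{I}}{C}(?)\bigr)$; this amounts to the fact that $(-)^{P}$ and $C_{G}(P)\backslash(-)$ commute degreewise with the bar construction over $\Dt$, since $G$ acts on that bar construction only inside the spaces $\NCGH{G}{H_{I}}{\overline{C}}$ and fixes the $\Dt$-coordinates. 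Consequently, by Eilenberg--Zilber, the chain complex $C_{*}\bigl(C_{G}(?)\backslash|\hocolim_{\Dt}(\NCGH{G}{\overline{H}}{\overline{C}})|^{?};\R\bigr)$ of $\R\OFC$-modules is quasi-isomorphic to the total complex of the double complex
\[
D_{p,q}:=\bigoplus_{I_{0}\preceq\cdots\preceq I_{p}\text{ in }\Dt}C_{q}\bigl(\NNCGH{G}{H_{I_{0}}}{C}(?);\R\bigr),
\]
with $p$ the bar degree over $\Dt$ and $q$ the simplicial-chain degree.

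Recall that $H^{*}_{\OFC}(|X|;M)$ of Definition \ref{def:fusion-Bredon-cohomology} is the hyper-$\Ext$ of $C_{*}(C_{G}(?)\backslash|X|^{?};\R)$ against $M$, equivalently $H^{*}\bigl(\Hom_{\R\OFC}(C_{*}(C_{G}(?)\backslash|X|^{?};\R),J^{\bullet})\bigr)$ for any injective resolution $M\to J^{\bullet}$ in $\R\OFC\Mod$. Applying this together with the quasi-isomorphism above, $H^{*}_{\OFC}(\hocolim_{\Dt}(\NCGH{G}{\overline{H}}{\overline{C}});M)$ is the cohomology of the total complex of the triple complex $\Hom_{\R\OFC}(D_{p,q},J^{r})$. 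Filtering by the bar degree $p$, the cohomology of the $p$-th column (the total cohomology in $q$ and $r$) is, using $\Hom_{\R\OFC}\bigl(\bigoplus_{\alpha}N_{\alpha},J^{r}\bigr)=\prod_{\alpha}\Hom_{\R\OFC}(N_{\alpha},J^{r})$ and Definitions \ref{def:NNCGH} and \ref{def:fusion-Bredon-cohomology} again,
\[
\prod_{I_{0}\preceq\cdots\preceq I_{p}}H^{*}_{\OFC}\bigl(\NCGH{G}{H_{I_{0}}}{\overline{C}};M\bigr).
\]
The remaining differential in the $p$-direction is the (co)bar differential of $\Dt$, so the cochain complex $p\mapsto\prod_{I_{0}\preceq\cdots\preceq I_{p}}H^{t}_{\OFC}(\NCGH{G}{H_{I_{0}}}{\overline{C}};M)$ computes the higher limits over $\Dt$ of $I\mapsto H^{t}_{\OFC}(\NCGH{G}{H_{I}}{\overline{C}};M)$. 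We thus obtain a spectral sequence
\[
E_{2}^{s,t}=\limn[s]_{\Dt}\Bigl(I\mapsto H^{t}_{\OFC}\bigl(\NCGH{G}{H_{I}}{\overline{C}};M\bigr)\Bigr)\ \Longrightarrow\ H^{s+t}_{\OFC}\bigl(\hocolim_{\Dt}(\NCGH{G}{\overline{H}}{\overline{C}});M\bigr),
\]
which by Corollary \ref{cor:Lim-D2.} is concentrated in the columns $s=0,1$.

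It remains to identify the inner coefficient. For $I=\{1\}$ and $I=\{2\}$ the hypothesis of the present lemma is exactly the hypothesis of Lemma \ref{lem:iso-Bredon-cohomology-H} applied with $H=G_{i}$ (note $S\cap G_{i}=S$, so $\FAB{S\cap G_{i}}{G_{i}}=\F_{\{i\}}$ and the relevant family equals $\mathcal{C}$), while for $I=\{1,2\}$ one has $H_{I}=S$ and the hypothesis holds with $x=1$. Hence Lemma \ref{lem:iso-Bredon-cohomology-H} supplies, for each $I$, an isomorphism $H^{t}_{\OFC}(\NCGH{G}{H_{I}}{\overline{C}};M)\cong\limn[t]_{\OFC[\F_{I}]}(M\downarrow_{\OFC[\F_{I}]}^{\OFC})$; since the spectral sequence of \cite[Proposition 5.7]{HigherLimitsOverTheFusionOrbitCategory} and the natural isomorphism of Proposition \ref{prop:Hn-and-const-functor.} from which that identification is assembled are natural in $H$, these isomorphisms are compatible with the structure maps of the two $\Dt$-diagrams and so fit into a natural isomorphism of functors on $\Dt$. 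Substituting it into the $E_{2}$-page produces the spectral sequence in the statement.

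The step I expect to be the main obstacle is precisely this last naturality claim: one must verify that the comparison isomorphisms attached to $G_{1}$, $G_{2}$ and $S$ respect the inclusions $\F_{\{1,2\}}\subseteq\F_{\{i\}}$ and the restriction maps they induce on higher limits, i.e.\ that the identification of Lemma \ref{lem:iso-Bredon-cohomology-H} is functorial in the subgroup $H$. A more hands-on alternative, which sidesteps the triple‑complex bookkeeping, is to note that (up to the identifications above) $\hocolim_{\Dt}(\NCGH{G}{\overline{H}}{\overline{C}})$ is the homotopy pushout of $\NCGH{G}{G_{1}}{\overline{C}}\leftarrow\NCGH{G}{S}{\overline{C}}\to\NCGH{G}{G_{2}}{\overline{C}}$; its Mayer--Vietoris short exact sequence of chain complexes of $\R\OFC$-modules yields, after hyper-$\Ext$, a long exact sequence whose kernels and cokernels one recognizes through Corollary \ref{cor:Lim-D2.} as the $s=0$ and $s=1$ columns of the spectral sequence above — which is therefore automatically degenerate at $E_{2}$.
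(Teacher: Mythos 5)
Your main argument is essentially the route taken in the paper, just written out in full. The paper obtains the spectral sequence
\[
E_{2}^{s,t}=\limn[s]_{\Dt}\left(H_{\OFC}^{t}\left(\NCGH{G}{\overline{H}}{\overline{C}}\left(?\right);M\right)\right)\Rightarrow H_{\OFC}^{s+t}\left(\hocolim_{\Dt}\left(\NCGH{G}{\overline{H}}{\overline{C}}\right);M\right)
\]
directly by citing \cite[\S XII 4.5]{BousfieldKanHomotopyLimitsColimitsCompletionsAndLocalizations} and then invokes Lemma \ref{lem:iso-Bredon-cohomology-H} to rewrite the coefficients; you instead reconstruct this spectral sequence from scratch by taking the bar complex over $\Dt$, checking that $(-)^{P}$ and $C_{G}(P)\backslash(-)$ commute with it (which is correct, since the $G$-action preserves the $\Dt$-coordinate, and geometric realization commutes with fixed points of a finite group and with quotients), and running hyper-$\Ext$ filtered by bar degree. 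Both versions end by applying Lemma \ref{lem:iso-Bredon-cohomology-H}, so the key lemma and the shape of the argument are the same. What your more explicit version buys is (a) a genuine proof that the Bousfield--Kan-type spectral sequence exists in the setting of fusion Bredon cohomology, which the paper leaves implicit in a citation, and (b) a place to surface the naturality-in-$H$ issue. That issue is real and the paper's one-line proof also glosses over it: to pass from $H_{\OFC}^{t}(\NCGH{G}{H_{I}}{\overline{C}};M)$ to $\limn[t]_{\OFC[\F_{I}]}(M\downarrow_{\OFC[\F_{I}]}^{\OFC})$ inside $\limn[s]_{\Dt}$, one needs the isomorphisms of Lemma \ref{lem:iso-Bredon-cohomology-H} to intertwine the restriction maps induced by $H_{\{1,2\}}\hookrightarrow H_{\{i\}}$, and this does follow from the naturality of the comparison in Proposition \ref{prop:Hn-and-const-functor.} and of the spectral sequence of \cite[Proposition 5.7]{HigherLimitsOverTheFusionOrbitCategory}, as you indicate, though it deserves to be said. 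Your closing Mayer--Vietoris alternative is a genuinely different and more elementary route that exploits the specific shape of $\Dt$ (a pushout diagram): it avoids the $E_{2}$-naturality bookkeeping entirely and simultaneously explains the $E_{2}$-degeneration that Corollary \ref{cor:Lim-D2.} forces, so it is arguably the cleaner proof for $n=2$; the triple-complex version has the advantage of generalizing immediately to $\Dn$ for $n>2$.
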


\begin{proof}
From \cite[\S XII 4.5]{BousfieldKanHomotopyLimitsColimitsCompletionsAndLocalizations}
we know that there exists a spectral sequence of the form
\[
\limn[s]_{\Dt}\left(H_{\OFC}^{n}\left(\NCGH{G}{\overline{H}}{\overline{C}}\left(?\right);M\right)\right)\Rightarrow H_{\OFC}^{s+t}\left(\hocolim_{\Dt}\left(\NCGH{G}{\overline{H}}{\overline{C}}\right);M\right).
\]
The result now follows from Lemma \ref{lem:iso-Bredon-cohomology-H}.
\end{proof}
We can now use Corollary \ref{cor:homology-TTCGH} and Lemma \ref{lem:ss-Bredon-cohomology-hocolim}
in order to compute the fusion Bredon cohomology of $\hocolim_{\Dt}\left(\NCGH{G}{\overline{H}}{\overline{C}}\right)$.
\begin{cor}
\label{cor:iso-bredon-cohomology-hocolim}With notation as in Lemma
\ref{lem:ss-Bredon-cohomology-hocolim} if $\limn_{\OFC[\F_{I}]}\left(M\downarrow_{\OFC[\F_{I}]}^{\OFC}\right)=0$
for every $n\ge1$ and every $I\in\Dt$ then
\[
\limn_{\Dt}\left(\lim_{\OFC[\F_{?}]}\left(M\downarrow_{\OFC[\F_{?}]}^{\OFC}\right)\right)\cong H_{\OFC}^{n}\left(\hocolim_{\Dt}\left(\NCGH{G}{\overline{H}}{\overline{C}}\right);M\right).
\]
\end{cor}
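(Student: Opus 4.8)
The plan is to obtain the isomorphism as the degeneration of the spectral sequence produced by Lemma \ref{lem:ss-Bredon-cohomology-hocolim}. First I would invoke that lemma to get, for the given $\R\OFC$-module $M$, the first quadrant cohomology spectral sequence
\[
\limn[s]_{\Dt}\left(\limn[t]_{\OFC[\F_{?}]}\left(M\downarrow_{\OFC[\F_{?}]}^{\OFC}\right)\right)\Rightarrow H_{\OFC}^{s+t}\left(\hocolim_{\Dt}\left(\NCGH{G}{\overline{H}}{\overline{C}}\right);M\right),
\]
where $?$ ranges over the objects $\left\{ 1\right\},\left\{ 2\right\},\left\{ 1,2\right\}$ of $\Dt$ and $\F_{\left\{ i\right\} }=\FAB{S}{G_{i}}$, $\F_{\left\{ 1,2\right\} }=\FS$. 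The point is that these three fusion systems are exactly the $\F_{I}$ with $I\in\Dt$, so the hypothesis $\limn[t]_{\OFC[\F_{I}]}\left(M\downarrow_{\OFC[\F_{I}]}^{\OFC}\right)=0$ for all $t\ge1$ and all $I\in\Dt$ says precisely that, for each fixed $t\ge1$, the functor on $\Dt$ sending $I$ to $\limn[t]_{\OFC[\F_{I}]}\left(M\downarrow_{\OFC[\F_{I}]}^{\OFC}\right)$ is the zero functor.

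Next I would note that this forces the $E_{2}$-page to be concentrated in the row $t=0$: for $s\ge0$ and $t\ge1$ the term $\limn[s]_{\Dt}$ of the zero functor vanishes. In a first quadrant spectral sequence supported on a single row every differential $d_{r}$ with $r\ge2$ has either zero source or zero target, so $E_{2}^{n,0}=E_{\infty}^{n,0}$ and the edge homomorphism gives $E_{2}^{n,0}\cong H_{\OFC}^{n}\left(\hocolim_{\Dt}\left(\NCGH{G}{\overline{H}}{\overline{C}}\right);M\right)$ for every $n\ge0$. Since $\limn[0]_{\OFC[\F_{?}]}\left(M\downarrow_{\OFC[\F_{?}]}^{\OFC}\right)=\lim_{\OFC[\F_{?}]}\left(M\downarrow_{\OFC[\F_{?}]}^{\OFC}\right)$, the term $E_{2}^{n,0}$ is exactly $\limn[n]_{\Dt}\left(\lim_{\OFC[\F_{?}]}\left(M\downarrow_{\OFC[\F_{?}]}^{\OFC}\right)\right)$, which is the left hand side of the claimed isomorphism.

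There is no serious obstacle here. The only things to be careful about are the bookkeeping that the three fusion systems $\F_{\left\{ 1\right\} },\F_{\left\{ 2\right\} },\F_{\left\{ 1,2\right\} }$ genuinely exhaust the objects of $\Dt$ (so the hypothesis kills every higher row, not merely some of them), and the routine fact that a one-row first quadrant spectral sequence collapses at $E_{2}$. If naturality of the isomorphism in $M$ is wanted, it is inherited from the naturality in $M$ of the spectral sequence of Lemma \ref{lem:ss-Bredon-cohomology-hocolim} and of its edge maps.
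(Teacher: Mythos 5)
Your proposal is correct and matches the paper's argument: invoke the spectral sequence of Lemma \ref{lem:ss-Bredon-cohomology-hocolim}, observe that the vanishing hypothesis kills every row $t\ge1$ of the $E_{2}$-page, and read off the collapse at $E_{2}$ along the $t=0$ row. The paper states this more tersely (``the spectral sequence is sharp''), but the content is the same; your added remarks on exhausting the objects of $\Dt$ and on naturality are just more explicit bookkeeping of the same step.
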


\begin{proof}
Since $\limn_{\OFC[\F_{I}]}\left(M\downarrow_{\OFC[\F_{I}]}^{\OFC}\right)=0$
for every $n\ge1$ then the spectral sequence of Lemma \ref{lem:ss-Bredon-cohomology-hocolim}
is sharp. The result follows.
\end{proof}
The following result is stated as Theorem \hyperref[thm:A]{A} in
Section \ref{sec:Introduction.} and with it we conclude this section.
\begin{thm}
\label{thm:Exact-sequences-for-sharpness.}Let $S$ be a finite $p$-group,
let $G_{1},G_{2}$ be finite groups such that $S\in\Syl_{p}\left(G_{i}\right)$
for $i=1,2$, define $G:=G_{1}*_{S}G_{2}$, define $\F:=\FSG$, for
$i=1,2$ define $\F_{\left\{ i\right\} }:=\FSG[G_{i}]$, define $\F_{\left\{ 1,2\right\} }:=\FS$
and let $\mathcal{C}$ be the family of $\F$-centric subgroups of
$S$. For every $\Fp\OFc$-module $M$ such that $\limn_{\OFC[\F_{I}]}\left(M\downarrow_{\OFC[\F_{I}]}^{\OFc}\right)=0$
for every $n\ge1$ and every $I\in\Dt$ there is an isomorphism
\[
\Ext_{\Fp\OFc}^{n}\left(\CGpC,M\right)\cong\limn[n+2]_{\OFc}\left(M\downarrow_{\OFc}^{\OFc}\right),
\]
where $\CGpC$ is as in Definition \ref{def:CGpC}. Moreover there
is a short exact sequence of the form 
\[
0\to\underset{\OFc}{\limn[1]}\left(M\right)\to M\left(S\right)/\left(M^{\F_{\left\{ 1\right\} }}+M^{\F_{\left\{ 2\right\} }}\right)\to\underset{\Fp\OFc}{\Hom}\left(\CGpC,M\right)\to\underset{\OFc}{\limn[2]}\left(M\right)\to0.
\]
Where the $\Fp$-modules $M^{\F_{\left\{ i\right\} }}$ (see Definition
\ref{def:MF}) are seen as subgroups of $M\left(S\right)$ via Lemma
\ref{lem:MF-as-subgroup-of-MS.}.
\end{thm}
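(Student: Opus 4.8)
\emph{Proof plan.}
The strategy is to run the spectral sequence of Equation~(\ref{eq:ss-general.}), applied with coefficients in $\Fp$, with $\overline{H}=\{G_{1},G_{2}\}$ and with $\overline{\mathcal{C}}$ the family of $p$-centric subgroups of $G$ (so that, as in Proposition~\ref{prop:TTCGH-is-BGCP}, the induced family $\mathcal{C}$ is exactly the family of $\F$-centric subgroups of $S$), and then to extract the two assertions from the fact that the resulting spectral sequence is concentrated in two rows. Throughout I write $H^{n}$ for the abutment $H_{\OFc}^{n}\left(\hocolim_{\Dt}\left(\NCGH{G}{\overline{H}}{\overline{C}}\right);M\right)$.

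First I would identify the $E_{2}$-page. By Lemma~\ref{lem:weak-equiv-hocolim.} the functor $C_{G}\left(?\right)\backslash\left(\hocolim_{\Dt}\left(\NCGH{G}{\overline{H}}{\overline{C}}\right)\right)^{?}$ is naturally weak equivalent to $\TTCGH{G}{\overline{H}}{C}$, whose homology is computed by Corollary~\ref{cor:homology-TTCGH} (via Proposition~\ref{prop:TTCGH-is-BGCP}): it is the constant functor $\underline{\Fp}$ in degree $0$, it is $\CGpC$ in degree $1$, and it vanishes in degrees $\ge 2$. Rewriting $\Ext_{\Fp\OFc}^{s}\left(\underline{\Fp}^{\OFc},M\right)$ as $\limn[s]_{\OFc}M$ by Lemma~\ref{lem:lims-and-ext-groups.} (taken with $\F'=\F$), the $E_{2}$-page therefore has only the two nonzero rows $E_{2}^{s,0}=\limn[s]_{\OFc}M$ and $E_{2}^{s,1}=\Ext_{\Fp\OFc}^{s}\left(\CGpC,M\right)$, which is Equation~(\ref{eq:simplification-ext.}).

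Next I would compute the abutment. The hypothesis imposed on $M$ is exactly what is needed to invoke Corollary~\ref{cor:iso-bredon-cohomology-hocolim} (whose remaining geometric hypothesis is supplied by $S\in\Syl_{p}\left(G_{i}\right)$), identifying $H^{n}$ with $\limn[n]_{\Dt}$ of the $\Fp\Dt$-module $I\mapsto\lim_{\OFC[\F_{I}]}\left(M\downarrow\right)$; by Lemma~\ref{lem:MF-as-subgroup-of-MS.} this module sends $\{i\}$ to $M^{\F_{\{i\}}}$, sends $\{1,2\}$ to $M\left(S\right)$, and has as its structure maps the inclusions $M^{\F_{\{i\}}}\hookrightarrow M\left(S\right)$. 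Feeding this into Corollary~\ref{cor:Lim-D2.} gives that the abutment equals $M\left(S\right)/\left(M^{\F_{\{1\}}}+M^{\F_{\{2\}}}\right)$ in degree $1$ and vanishes in degrees $\ge 2$ (it equals $M^{\F_{\{1\}}}\cap M^{\F_{\{2\}}}$ in degree $0$), which is Equation~(\ref{eq:simplification-bredon}).

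Finally, a first-quadrant cohomological spectral sequence concentrated in the rows $t=0,1$ degenerates at $E_{3}$ and, by \cite[Exercise 5.2.2]{WeibelIntroductionToHomologicalAlgebra1994}, produces a long exact sequence
\[
\cdots\to E_{2}^{n,0}\to H^{n}\to E_{2}^{n-1,1}\xrightarrow{d_{2}}E_{2}^{n+1,0}\to H^{n+1}\to E_{2}^{n,1}\xrightarrow{d_{2}}E_{2}^{n+2,0}\to H^{n+2}\to\cdots.
\]
Substituting the two computations: for $n\ge1$ one has $H^{n+1}=H^{n+2}=0$, so the corresponding $d_{2}$ is an isomorphism $\Ext_{\Fp\OFc}^{n}\left(\CGpC,M\right)\cong\limn[n+2]_{\OFc}M$; and since $E_{2}^{-1,1}=0$ and $H^{2}=0$, the bottom of the sequence reads
\[
0\to\limn[1]_{\OFc}M\to M\left(S\right)/\left(M^{\F_{\{1\}}}+M^{\F_{\{2\}}}\right)\to\Hom_{\Fp\OFc}\left(\CGpC,M\right)\to\limn[2]_{\OFc}M\to0,
\]
which is the asserted short exact sequence. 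No single step here is deep—the real content lives in the earlier sections—so the main obstacle is purely bookkeeping: matching the indices in the two-row exact sequence with the computed $E_{2}$-terms and abutment, and verifying that the $\Fp\Dt$-module occurring in Corollary~\ref{cor:iso-bredon-cohomology-hocolim} is the one described above (in particular that its structure maps are the inclusions of Lemma~\ref{lem:MF-as-subgroup-of-MS.}), so that Corollary~\ref{cor:Lim-D2.} applies verbatim.
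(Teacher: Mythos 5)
Your proposal is correct and follows essentially the same route as the paper's own proof: the same spectral sequence from \cite[Proposition 5.7]{HigherLimitsOverTheFusionOrbitCategory} combined with Lemma~\ref{lem:weak-equiv-hocolim.}, the same identification of the $E_2$-page via Corollary~\ref{cor:homology-TTCGH} and Lemma~\ref{lem:lims-and-ext-groups.}, the same reduction of the abutment via Corollaries~\ref{cor:iso-bredon-cohomology-hocolim} and~\ref{cor:Lim-D2.} and Lemma~\ref{lem:MF-as-subgroup-of-MS.}, and the same appeal to \cite[Exercise 5.2.2]{WeibelIntroductionToHomologicalAlgebra1994} for the two-row long exact sequence. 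Your unwinding of the indices and the identification of the $\Fp\Dt$-module's structure maps are both correct and match the paper's argument.
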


\begin{proof}
Let $\overline{H}:=\left\{ G_{1},G_{2}\right\} $. We know from \cite[Proposition 5.7]{HigherLimitsOverTheFusionOrbitCategory}
and Lemma \ref{lem:weak-equiv-hocolim.} that there exists a spectral
sequence of the form
\[
E_{2}^{s,t}:=\Ext_{\Fp\OFc}^{s}\left(H_{t}\left(\TTCGH{G}{\overline{H}}{C};\Fp\right),M\right)\Rightarrow H_{\OFc}^{s+t}\left(\underset{\Dt}{\hocolim}\left(\NCGH{G}{\overline{H}}{\overline{C}}\right);M\right).
\]
Applying Corollary \ref{cor:iso-bredon-cohomology-hocolim} we can
rewrite the above spectral sequence as
\[
E_{2}^{s,t}\Rightarrow\limn[s+t]_{\Dt}\left(\lim_{\OFC[\F_{?}]}\left(M\downarrow_{\OFC[\F_{?}]}^{\OFC}\right)\right).
\]
On the other hand, from Corollary \ref{cor:homology-TTCGH}, we know
that
\[
E_{2}^{s,t}=\begin{cases}
\Ext_{\Fp\OFc}^{s}\left(\CGpC,M\right) & \text{if }t=1\\
\Ext_{\Fp\OFc}^{s}\left(\underline{\Fp},M\right) & \text{if }t=0\\
0 & \text{else}
\end{cases}.
\]

Since $\Ext_{\Fp\OFc}^{n}\left(\underline{\Fp},M\right)\cong\limn_{\OFc}M$
(see Lemma \ref{lem:lims-and-ext-groups.}) we can apply \cite[Exercise 5.2.2]{WeibelIntroductionToHomologicalAlgebra1994}
in order to obtain the following long exact sequence
\[
\cdots\to\limn_{\Dt}\left(\lim_{\OFC[\F_{?}]}M\right)\to\Ext_{\R\OFc}^{n-1}\left(\CGpC,M\right)\to\limn[n+1]_{\OFc}\left(M\right)\to\limn[n+1]_{\Dt}\left(\lim_{\OFC[\F_{?}]}M\right)\to\cdots
\]
Where we define $\lim_{\OFC[\F_{?}]}M:=\lim_{\OFC[\F_{?}]}\left(M\downarrow_{\OFC[\F_{?}]}^{\OFC}\right)$
in order to simplify notation. From Corollary \ref{cor:Lim-D2.} we
know that $\limn_{\Dt}\left(\lim_{\OFC[\F_{?}]}\left(M_{?}\right)\right)=0$
for every $n\ge2$ and that $\limn[1]_{\Dt}\left(\lim_{\OFC[\F_{?}]}M\right)\cong M^{\F_{\left\{ 1,2\right\} }}/\left(M^{\F_{\left\{ 1\right\} }}+M^{\F_{\left\{ 2\right\} }}\right)$.
Since $M^{\F_{\left\{ 1,2\right\} }}=M\left(S\right)$ (see Lemma
\ref{lem:MF-as-subgroup-of-MS.}) the result follows.
\end{proof}

\section{\protect\label{sec:last-section}Applications of Theorem \hyperref[thm:A]{A}.}

This last section is dedicated to viewing potential ways in which
Theorem \hyperref[thm:A]{A} can be applied to study the sharpness
conjecture for the Benson-Solomon fusion systems.

Let us start by recalling a few facts regarding the definition of
this family of fusion systems. Let $q$ be an odd prime, let $n$
be a positive integer, let $S:=S\left(q^{n}\right)\in\Syl_{2}\left(\Spin_{7}\left(q^{n}\right)\right)$
and let $S_{0}:=S_{0}\left(q^{n}\right)\le S$ be the subgroup of
index $2$ described in \cite[Definition 1.7]{BensonSolomonFSCorrection}.
 In \cite[Definition 2.6]{BensonSolomonFS} Levi and Oliver define
a group of automorphism $\Gamma:=\Gamma\left(q^{n}\right)\le\Aut\left(S_{0}\right)$
and describe the Benson-Solomon fusion system over $S$ as 
\begin{equation}
\F_{\Sol}:=\F_{\Sol}\left(q^{n}\right)=\left\langle \FAB{S}{\Spin_{7}\left(q^{n}\right)},\FAB{S_{0}}{\Gamma}\right\rangle .\label{eq:benson-solomon-description}
\end{equation}
That is $\F_{\Sol}$ is the fusion system over $S$ whose morphism
can be written as compositions of finitely many morphism that are
given either by conjugation with an element in $\Spin_{7}\left(q^{n}\right)$
or by restricting a morphism in $\Gamma$.

Applying Alperin's fusion theorem we can deduce that $S_{0}$ is $\F$-centric
and, since $S_{0}$ has index $2$ in $S$, we can also deduce that
$S_{0}$ is normal in $S$. Therefore we can apply \cite[Proposition C]{SubgroupFamiliesControllingpLocalFiniteGroups}
to conclude that 
\begin{align*}
S & \in\Syl_{2}\left(\Aut_{\L}\left(S_{0}\right)\right) & \text{and} &  & N_{\F_{\Sol}}\left(S_{0}\right) & =\FAB{S}{\Aut_{\L}\left(S_{0}\right)}
\end{align*}
where $\L:=\L\left(q^{n}\right)$ is the centric linking system associated
to $\F_{\Sol}$. With this setup we can rewrite Equation (\ref{eq:benson-solomon-description})
as $\F_{\Sol}=\left\langle \FAB{S}{\Spin_{7}\left(q^{n}\right)},\FAB{S}{\Aut_{\L}\left(S_{0}\right)}\right\rangle $
and we obtain the following.
\begin{prop}
\label{prop:theorem-to-benson-solomon}With notation as above Theorem
\hyperref[thm:A]{A} holds with $p:=2$, $S:=S\left(q^{n}\right)$,
$G_{1}:=\Spin_{7}\left(q^{n}\right)$, $G_{2}:=\Aut_{\L}\left(S_{0}\right)$,
$\F:=\F_{\Sol}$ and $M$ the contravariant part of a Mackey functor
over $\F$ with coefficients in $\Fp[2]$.
\end{prop}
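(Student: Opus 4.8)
The plan is to verify the standing hypotheses of Theorem~\ref{thm:Exact-sequences-for-sharpness.} for the data in the statement; once these are checked, that theorem applies verbatim and yields, for every Mackey functor $M$ over $\F_{\Sol}$ with coefficients in $\Fp[2]$ which satisfies the vanishing condition appearing there, the isomorphism $\Ext_{\Fp[2]\OFc}^{n}\left(\CGpC,M\right)\cong\limn[n+2]_{\OFc}M$ together with the four-term exact sequence. The points to check are: (i) $S:=S\left(q^{n}\right)$ is a finite $2$-group; (ii) $G_{1}:=\Spin_{7}\left(q^{n}\right)$ and $G_{2}:=\Aut_{\L}\left(S_{0}\right)$ are finite groups with $S\in\Syl_{2}\left(G_{i}\right)$ for $i=1,2$; (iii) $\F_{\Sol}=\FSG[G]$ for $G:=G_{1}*_{S}G_{2}$; and (iv) $\F_{\Sol}$ is saturated. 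Item (i) is immediate since $S$ is by definition a Sylow $2$-subgroup of a finite group, and item (iv) is the Levi--Oliver theorem \cite{BensonSolomonFS,BensonSolomonFSCorrection}.

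For item (ii): that $S\in\Syl_{2}\left(\Spin_{7}\left(q^{n}\right)\right)$ is the defining property of $S$, while $\Aut_{\L}\left(S_{0}\right)$ is finite because in a centric linking system the automorphism group of an object is an extension of the finite group $\Aut_{\F_{\Sol}}\left(S_{0}\right)$ by the finite group $Z\left(S_{0}\right)$. That $S\in\Syl_{2}\left(\Aut_{\L}\left(S_{0}\right)\right)$ was already shown in the discussion preceding Proposition~\ref{prop:theorem-to-benson-solomon}: $S_{0}$ is $\F_{\Sol}$-centric by Alperin's fusion theorem and is normal in $S$ because it has index $2$, so \cite[Proposition~C]{SubgroupFamiliesControllingpLocalFiniteGroups} applies; the same result also supplies the identification $N_{\F_{\Sol}}\left(S_{0}\right)=\FAB{S}{\Aut_{\L}\left(S_{0}\right)}$, which is exactly what is used there to rewrite Equation~(\ref{eq:benson-solomon-description}) in the form $\F_{\Sol}=\left\langle \FAB{S}{\Spin_{7}\left(q^{n}\right)},\FAB{S}{\Aut_{\L}\left(S_{0}\right)}\right\rangle$.

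The one genuinely substantive point is item (iii), the identification of $\FSG[G]$ with the join $\left\langle \FAB{S}{G_{1}},\FAB{S}{G_{2}}\right\rangle$. The inclusion $\left\langle \FAB{S}{G_{1}},\FAB{S}{G_{2}}\right\rangle\subseteq\FSG[G]$ is clear because each $G_{i}$ embeds in $G$, so $\FAB{S}{G_{i}}\subseteq\FSG[G]$. For the reverse inclusion I would argue as in the proof of Lemma~\ref{lem:Classifying=000020space.}: any morphism of $\FSG[G]$ is conjugation by some $g\in G$, which by the normal form theorem for amalgams \cite[Theorem~1]{TreesSerre} is an alternating product $g=a_{1}\cdots a_{k}$ with each $a_{j}$ in $G_{1}$ or $G_{2}$; since the subgroups being conjugated are $2$-groups and $S\in\Syl_{2}\left(G_{i}\right)$, \cite[Lemma~1]{RobinsonAmalgams} lets one conjugate the successive images back into $S$, which presents the morphism as a composite of restrictions of morphisms of $\FAB{S}{G_{1}}$ and $\FAB{S}{G_{2}}$. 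Together with the rewritten form of Equation~(\ref{eq:benson-solomon-description}) this gives $\FSG[G]=\F_{\Sol}$, and with items (i), (ii) and (iv) all hypotheses of Theorem~\ref{thm:Exact-sequences-for-sharpness.} are then in place.

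I would close with a remark on scope, which is also where a genuine sharpness statement would meet its real obstacle: Proposition~\ref{prop:theorem-to-benson-solomon} does \emph{not} assert that the vanishing hypothesis $\limn_{\OFC[\F_{I}]}\left(M\downarrow_{\OFC[\F_{I}]}^{\OFc}\right)=0$ for $n\ge1$ and $I\in\Dt$ holds. For $I=\{1,2\}$ one has $\F_{\{1,2\}}=\FS$, and since $S$ is a terminal object of $\OFC[\FS]$ the constant functor is projective over $\Fp[2]\OFC[\FS]$, so these higher limits vanish automatically. For $I=\{1\}$ and $I=\{2\}$ one is reduced to the fusion systems of the finite groups $\Spin_{7}\left(q^{n}\right)$ and $\Aut_{\L}\left(S_{0}\right)$, where one would like to invoke Dwyer's sharpness results \cite{dwyer1998sharp}; the delicate point is that the collection $\mathcal{C}$ of $\F_{\Sol}$-centric subgroups of $S$ can be strictly smaller than the collection of $2$-centric subgroups of these groups (one direction is straightforward --- every $\F_{\Sol}$-centric subgroup of $S$ is $2$-centric in $\Spin_{7}\left(q^{n}\right)$, since $\F_{\Sol}$ extends $\FAB{S}{\Spin_{7}\left(q^{n}\right)}$ --- but the reverse need not hold), so one still has to argue that sharpness descends from the $2$-centric collection to the smaller $\F_{\Sol}$-centric one. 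Establishing that descent is the main obstacle to upgrading this conditional statement to an unconditional sharpness result, and it is why the proposition is presented as a tool for attacking the conjecture rather than a resolution of it.
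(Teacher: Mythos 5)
Your verification of the structural hypotheses (that $S$ is a finite $2$-group, that the $G_{i}$ are finite with $S$ a Sylow $2$-subgroup, that $\F_{\Sol}$ is saturated, and that $\F_{\Sol}=\FSG[G_{1}*_{S}G_{2}]$) is fine and consistent with the setup in the paragraph preceding the proposition, which already established most of it. The problem is the final paragraph, which is based on a misreading of what the proposition asserts. The phrase ``Theorem~\hyperref[thm:A]{A} holds with $\ldots$ $M$ the contravariant part of a Mackey functor over $\F$ with coefficients in $\Fp[2]$'' means precisely that for such $M$ the hypotheses of Theorem~\ref{thm:Exact-sequences-for-sharpness.} are satisfied --- in particular the vanishing hypothesis $\limn_{\OFC[\F_{I}]}\left(M\downarrow_{\OFC[\F_{I}]}^{\OFc}\right)=0$ for $n\ge1$ and $I\in\Dt$ --- and hence the conclusions hold unconditionally for every Mackey functor over $\F_{\Sol}$. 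Verifying that vanishing hypothesis is the substantive content of the proposition; by asserting it is ``not asserted'' you have omitted exactly the part that requires proof.

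The reason this is provable is also the reason the hypothesis on $M$ is there: Díaz and Park's Theorem~B in \cite{diaz2014mackey} establishes the sharpness conjecture for $\FSG[G']$ when $G'$ is a finite group and $M$ is the contravariant part of a Mackey functor, and one has to push this result from the $\F_{i}$-centric collections to the common $\F_{\Sol}$-centric collection $\mathcal{C}$. For $\F_{1}=\FAB{S}{\Spin_{7}\left(q^{n}\right)}$ no descent is needed: by \cite[Proposition~3.3(a)]{BensonSolomonFS} the $\F_{\Sol}$-centric subgroups of $S$ coincide with the $\F_{1}$-centric ones, so $\OFC[\F_{1}]=\OFc[\F_{1}]$ and \cite[Theorem~B]{diaz2014mackey} applies directly; your worry that ``the reverse need not hold'' is exactly what that citation rules out. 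For $\F_{2}=\FAB{S}{\Aut_{\L}\left(S_{0}\right)}=N_{\F_{\Sol}}\left(S_{0}\right)$ the collections do differ, and the paper bridges the gap by \cite[Lemma~10.4 and Proposition~10.5]{HigherLimitsOverTheFusionOrbitCategory} (every $\F_{2}$-centric-radical subgroup is $\F_{\Sol}$-centric, and every $\F_{\Sol}$-centric subgroup is $\F_{2}$-centric) together with Corollary~\ref{cor:iso-on-higher-limits-Zp.}, which identify the higher limits over $\OFC[\F_{2}]$ with those over $\OFc[\F_{2}]$, after which \cite[Theorem~B]{diaz2014mackey} applies again. The case $\F_{\left\{1,2\right\}}=\FS$ is handled by the same mechanism, since the only $\FS$-centric-radical subgroup is $S$ itself. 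Without these three verifications the proposition has no content, so the missing paragraph is not a remark on scope --- it is the proof.
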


\begin{proof}
From the above discussion we know that $S$ is a Sylow $2$-subgroup
of both $G_{1}$ and $G_{2}$. Since $\F$ is generated by the fusion
subsystems $\F_{1}:=\FSG[G_{1}]$ and $\F_{2}:=\FSG[G_{2}]$ we can
deduce that $\F=\FSG[G_{1}*_{S}G_{2}]$.

From \cite[Proposition 3.3 (a)]{BensonSolomonFS} we know that $\F$-centric
subgroups of $S$ coincide with $\F_{1}$-centric subgroups of $S$.
We can conclude from \cite[Theorem B]{diaz2014mackey} that $\limn_{\OFC[\F_{1}]}\left(M\downarrow_{\OFC[\F_{1}]}^{\OF}\right)=0$
for every $n\ge1$.

Since $\F_{2}:=N_{\F}\left(S_{0}\right)$ and $S_{0}$ is $\F$-centric
we can apply \cite[Lemma 10.4]{HigherLimitsOverTheFusionOrbitCategory}
to deduce that every $\F_{2}$-centric-radical subgroup of $S$ is
$\F$-centric. Since $\F_{2}\subseteq\F$ we can also deduce that
every $\F$-centric subgroup of $S$ is $\F_{2}$-centric. We can
therefore apply \cite[Proposition 10.5]{HigherLimitsOverTheFusionOrbitCategory}
and Corollary \ref{cor:iso-on-higher-limits-Zp.} to deduce that $\limn_{\OFC[\F_{2}]}\left(M\downarrow_{\OFC[\F_{2}]}^{\OF}\right)\cong\limn_{\OFc[\F_{2}]}\left(M\downarrow_{\OFc[\F_{2}]}^{\OF}\right)$
for every $n$. Applying \cite[Theorem B]{diaz2014mackey} we conclude
that $\limn_{\OFC[\F_{2}]}\left(M\downarrow_{\OFC[\F_{2}]}^{\OF}\right)=0$
for every $n\ge1$.

Finally, since the only $\FS$-centric-radical subgroup of $S$ is
$S$, we can apply the same reasoning as before to conclude that $\limn_{\OFC[\FS]}\left(M\downarrow_{\OFC[\FS]}^{\OF}\right)=0$
for every $n\ge1$. All the conditions needed to apply Theorem \hyperref[thm:A]{A}
are therefore satisfied with the setup in the statement. This concludes
the proof.
\end{proof}
Proposition \ref{prop:theorem-to-benson-solomon} allows us to apply
Theorem \hyperref[thm:A]{A} to the family of Benson-Solomon fusion
systems. However the problem still remains that the functor $\CGpC$
might be too complicated to work with. A step towards simplifying
this problem is provided by Aschbacher and Chermak.

Let $q\equiv\pm3\text{ mod }5$ be a prime. In \cite{GroupTheoreticApproachFamilyOf2LocalFiniteGroups}
Aschbacher and Chermak construct for each such $q$ groups $H,B,K$
satisfying $B\le H,K$. They then define the amalgam $G:=H*_{B}K$
and describe for every $n\ge1$ an automorphism $\sigma:=\psi_{n}$
of $G$ such that $\sigma$ restricts to automorphisms of $H,B$ and
$K$ (see \cite[Lemma 5.7]{GroupTheoreticApproachFamilyOf2LocalFiniteGroups}).
Denoting by $X_{\sigma}$ the subgroup of $X\in\left\{ H,B,K,G\right\} $
fixed under the automorphism $\sigma$ they prove in \cite[Lemma 7.4(b)]{GroupTheoreticApproachFamilyOf2LocalFiniteGroups}
that there exits an isomorphism $G_{\sigma}\cong H_{\sigma}*_{B_{\sigma}}K_{\sigma}$.
In \cite[Theorem A(3)]{GroupTheoreticApproachFamilyOf2LocalFiniteGroups}
they use this construction to prove that there exists $S_{\sigma}\in\Syl_{2}\left(H_{\sigma}\right)$
satisfying 
\[
\F_{\Sol}:=\F_{\Sol}\left(q^{2^{n}}\right)=\FAB{S_{\sigma}}{G_{\sigma}}.
\]
Finally, in \cite[Theorem A(4)]{GroupTheoreticApproachFamilyOf2LocalFiniteGroups},
they provide a mapping $\theta_{\sigma}:\OFc[\F_{\Sol}]^{\text{op}}\to\text{Grp}$
which sends every $P\in\F_{\Sol}^{c}$ to a complement $\theta_{\sigma}\left(P\right)$
of $Z\left(P\right)$ in $C_{G_{\sigma}}\left(P\right)$.

Given the description of $\theta_{\sigma}$ provided in \cite{GroupTheoreticApproachFamilyOf2LocalFiniteGroups}
we believe that the following, stated as Theorem \hyperref[thm:B]{B}
in Section \ref{sec:Introduction.}, might prove useful as a means
of studying the sharpness conjecture over certain Benson-Solomon fusion
systems.
\begin{thm}
\label{thm:relation-to-signalizer-functor}With notation as above
we have that:
\begin{enumerate}
\item \label{enu:Theorem-B-1}Theorem \hyperref[thm:A]{A} holds with $p:=2$,
$S:=S_{\sigma}\in\Syl_{2}\left(\Spin_{7}\left(q^{2^{n}}\right)\right)$,
$G_{1}:=H_{\sigma}$, $G_{2}:=K_{\sigma}$, $\F:=\F_{\Sol}$ and $M$
the contravariant part of a Mackey functor over $\F$ with coefficients
in $\Fp[2]$.
\item \label{enu:Theorem-B-2}For every $\F_{\Sol}$-centric subgroup $P$
of $S$ the group $\CGpC\left(P\right)$ is the tensor product of
$\Fp\left[2\right]$ with the abelianization of an extension of $\theta_{\sigma}\left(P\right)$.
\end{enumerate}
\end{thm}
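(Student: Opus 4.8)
The proof divides along the two items. For item (\ref{enu:Theorem-B-1}) the plan is to bring the data into the shape of Proposition \ref{prop:theorem-to-benson-solomon}, with $q^{n}$ replaced throughout by $q^{2^{n}}$. By \cite[Theorem A(3)]{GroupTheoreticApproachFamilyOf2LocalFiniteGroups} and the construction carried out there, $S:=S_{\sigma}$ is a common Sylow $2$-subgroup of $H_{\sigma}$ and $K_{\sigma}$ with $S\le B_{\sigma}$, and $\F_{\Sol}=\FAB{S}{G_{\sigma}}$ where $G_{\sigma}=H_{\sigma}*_{B_{\sigma}}K_{\sigma}$. The group occurring internally in Theorem \hyperref[thm:A]{A} is $G:=G_{1}*_{S}G_{2}=H_{\sigma}*_{S}K_{\sigma}$, and the first step is to check that it still realizes $\F_{\Sol}$: since $S$ is a common Sylow $2$-subgroup of the two vertex groups, Bass--Serre theory (see \cite[Theorem 1]{TreesSerre} and \cite[Lemma 1]{RobinsonAmalgams}) shows that any conjugation between subgroups of $S$ induced by an element of an amalgamated product of $H_{\sigma}$ and $K_{\sigma}$ factors as a composite of conjugations inside $H_{\sigma}$ and inside $K_{\sigma}$; applied to both $G_{\sigma}$ and $G$ this gives $\FAB{S}{G}=\langle\FAB{S}{H_{\sigma}},\FAB{S}{K_{\sigma}}\rangle=\FAB{S}{G_{\sigma}}=\F_{\Sol}$. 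It then remains to verify the vanishing hypotheses of Theorem \hyperref[thm:A]{A}, namely $\limn_{\OFC[\F_{i}]}\left(M\downarrow_{\OFC[\F_{i}]}^{\OFc}\right)=0$ for $n\ge1$ and $i=1,2,3$, where $\F_{1}=\FAB{S}{H_{\sigma}}$, $\F_{2}=\FAB{S}{K_{\sigma}}$ and $\F_{3}=\FS$. By \cite[Theorem A]{GroupTheoreticApproachFamilyOf2LocalFiniteGroups} the subsystems $\F_{1}$ and $\F_{2}$ are exactly $\FAB{S}{\Spin_{7}(q^{2^{n}})}$ and the normalizer subsystem $N_{\F_{\Sol}}(S_{0})$ --- the two generating subsystems featuring in Proposition \ref{prop:theorem-to-benson-solomon} --- so the cases $i=1,2$ follow verbatim from the proof of that proposition (via \cite[Proposition 3.3(a)]{BensonSolomonFS}, \cite[Lemma 10.4 and Proposition 10.5]{HigherLimitsOverTheFusionOrbitCategory} and \cite[Theorem B]{diaz2014mackey}), while the case $i=3$ is immediate since $S$ is a terminal object of $\OFC[\FS]$ (see Lemma \ref{lem:MF-as-subgroup-of-MS.}). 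This establishes item (\ref{enu:Theorem-B-1}).

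For item (\ref{enu:Theorem-B-2}), write $G=H_{\sigma}*_{S}K_{\sigma}$ and fix an $\F_{\Sol}$-centric subgroup $P\le S$. By Definition \ref{def:CGpC} and Corollary \ref{cor:homology-TTCGH},
\[
\CGpC\left(P\right)=H_{1}\left(\TTCGH{G}{\overline{H}}{C}\left(P\right);\Fp[2]\right)\cong\text{Ab}\left(C_{G}\left(P\right)/Z\left(P\right)\right)\otimes\Fp[2],
\]
so it suffices to realize $C_{G}(P)/Z(P)$ as an extension of $\theta_{\sigma}(P)$. Since $S\le B_{\sigma}$, the canonical maps $H_{\sigma}\to G_{\sigma}$ and $K_{\sigma}\to G_{\sigma}$ agree on $S$, so the universal property of the pushout gives a surjection $\pi\colon G=H_{\sigma}*_{S}K_{\sigma}\twoheadrightarrow H_{\sigma}*_{B_{\sigma}}K_{\sigma}=G_{\sigma}$ restricting to an isomorphism on each vertex group, in particular on $S$ and on $P$. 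By \cite[Theorem A(4)]{GroupTheoreticApproachFamilyOf2LocalFiniteGroups}, $\theta_{\sigma}(P)$ is a complement in $C_{G_{\sigma}}(P)$ to the central subgroup $Z(P)$, hence $C_{G_{\sigma}}(P)/Z(P)\cong\theta_{\sigma}(P)$. One then checks that $\pi$ maps $C_{G}(P)$ onto $C_{G_{\sigma}}(P)$: the image is visibly contained in $C_{G_{\sigma}}(P)$, while $Z(P)\le S$ lifts through $\pi$ and $\theta_{\sigma}(P)$ is realized in \cite{GroupTheoreticApproachFamilyOf2LocalFiniteGroups} inside a finite factor group of the amalgam, where it still centralizes $P$ and hence lifts isomorphically into $C_{G}(P)$, so that $\pi(C_{G}(P))\supseteq Z(P)\theta_{\sigma}(P)=C_{G_{\sigma}}(P)$. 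Composing $\pi\colon C_{G}(P)\twoheadrightarrow C_{G_{\sigma}}(P)$ with $C_{G_{\sigma}}(P)\twoheadrightarrow C_{G_{\sigma}}(P)/Z(P)\cong\theta_{\sigma}(P)$, and noting that $Z(P)\le C_{G}(P)$ lies in the kernel, produces a surjection $C_{G}(P)/Z(P)\twoheadrightarrow\theta_{\sigma}(P)$ whose kernel is isomorphic to $\ker\pi\cap C_{G}(P)$. Thus $C_{G}(P)/Z(P)$ is an extension of $\theta_{\sigma}(P)$, and tensoring its abelianization with $\Fp[2]$ gives item (\ref{enu:Theorem-B-2}).

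I expect the genuine work to lie not in the homological apparatus developed above but in the translations from the group-theoretic construction of \cite{GroupTheoreticApproachFamilyOf2LocalFiniteGroups}: for item (\ref{enu:Theorem-B-1}), verifying that $S_{\sigma}$ is simultaneously a Sylow $2$-subgroup of $H_{\sigma}$, $K_{\sigma}$ and $B_{\sigma}$ and that $\FAB{S}{K_{\sigma}}$ is precisely the normalizer subsystem $N_{\F_{\Sol}}(S_{0})$ used in Proposition \ref{prop:theorem-to-benson-solomon}; and for item (\ref{enu:Theorem-B-2}), locating the complement $\theta_{\sigma}(P)$ inside a vertex group of the amalgam, which is exactly what allows it to be lifted along $\pi$. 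Both are facts one should read off from the explicit models of \cite{GroupTheoreticApproachFamilyOf2LocalFiniteGroups} rather than prove afresh.
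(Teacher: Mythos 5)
Your overall strategy for both items is correct, and the surjection $\pi\colon H_{\sigma}*_{S_{\sigma}}K_{\sigma}\twoheadrightarrow G_{\sigma}$ is the same device the paper uses, but there are two places where you have written "follows" or "one then checks" precisely at the point where the paper does genuine work, and in one of those places your claimed shortcut looks to be false.

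For Item (\ref{enu:Theorem-B-1}), the case $i=1$ does go exactly as in Proposition \ref{prop:theorem-to-benson-solomon}, but the case $i=2$ does not. You assert that $\FAB{S_{\sigma}}{K_{\sigma}}$ is the normalizer subsystem $N_{\F_{\Sol}}(S_{0})$ and hence Proposition \ref{prop:theorem-to-benson-solomon} applies verbatim. This is not a lemma you can read off from \cite[Theorem A]{GroupTheoreticApproachFamilyOf2LocalFiniteGroups}; the group $K$ there is $B_{0}\rtimes S_{3}$ with $B_{0}=C_{H}(U)$ for a four-group $U$, so $K_{\sigma}$ is built around the normalizer of $U$, not of $S_{0}$. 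The paper's proof consequently avoids any appeal to $N_{\F_{\Sol}}(S_{0})$ here and instead argues directly: it uses \cite[Lemma 4.5]{GroupTheoreticApproachFamilyOf2LocalFiniteGroups} to identify $U\trianglelefteq S_{\sigma}$ with $(B_{0})_{\sigma}=C_{H_{\sigma}}(U)$, shows $U$ is fully $\F_{1}$-centralized, applies \cite[Proposition 2.5(a)]{BrotoLeviOliverHomotopyTheoryOfFusionSystems} to compare centricity, then invokes \cite[Lemma 1.10]{GroupTheoreticApproachFamilyOf2LocalFiniteGroups} to see $\F_{2}$ is generated by $\FS[S_{\sigma}]$ and $\FAB{C_{S_{\sigma}}(U)}{K_{\sigma}}$, and only then feeds the resulting information into \cite[Proposition 10.5]{HigherLimitsOverTheFusionOrbitCategory}, Corollary \ref{cor:iso-on-higher-limits-Zp.} and \cite[Theorem B]{diaz2014mackey}. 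Your reduction to the earlier proposition bypasses all of this and rests on an identification that you would need to establish; as written it is a gap, and probably not a repairable one via the route you suggest. (Your terminal-object remark for $i=3$ is fine.)

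For Item (\ref{enu:Theorem-B-2}), the framework is right, but the entire content of the proof is the inclusion $\pi\bigl(C_{G}(P)\bigr)\supseteq C_{G_{\sigma}}(P)$, and your justification for it is circular. You say $\theta_{\sigma}(P)$ "lifts isomorphically into $C_{G}(P)$" because it sits in a finite factor group of the amalgam --- but that a given element of $C_{G_{\sigma}}(P)$ admits a preimage in $G$ that still centralizes $P$ is exactly what must be proved, and it is not automatic: the kernel of $\pi$ is generated by the $B_{\sigma}$-relations missing from $S_{\sigma}$, and an arbitrary lift of a centralizing element need not centralize. The paper proves surjectivity by taking a normal form $x=a_{H_{\sigma},n}a_{K_{\sigma},n}\cdots a_{H_{\sigma},1}a_{K_{\sigma},1}\in C_{G_{\sigma}}(P)$, using \cite[Lemma 1]{RobinsonAmalgams} to place each partial product in $N_{G_{\sigma}}(P,B_{\sigma})$, conjugating each partial product into $N_{G_{\sigma}}(P,S_{\sigma})$ via Sylow's theorem in $B_{\sigma}$, and assembling the adjusted letters $c_{J,i}$ into an explicit lift $\tilde{x}\in C_{\hat G_{\sigma}}(P)$ with $\pi(\tilde{x})=x$. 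That explicit construction is what your "one then checks" elides; without it the extension $C_{G}(P)/Z(P)\twoheadrightarrow\theta_{\sigma}(P)$ is not established.
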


\begin{proof}
We know from \cite[(4.2.2)]{GroupTheoreticApproachFamilyOf2LocalFiniteGroups}
that $H_{\sigma}\cong\Spin_{7}\left(q^{2^{n}}\right)$. In particular
$H_{\sigma}$ is finite.

From \cite[Section 5]{GroupTheoreticApproachFamilyOf2LocalFiniteGroups}
we also know that the group $K$ is isomorphic to the semidirect product
$K\cong B_{0}\rtimes S_{3}$ for some $B_{0}\le H$.  Viewing $\sigma$
as an automorphism of $B_{0}\rtimes S_{3}$ via this isomorphism we
know from \cite[Lemma 5.7]{GroupTheoreticApproachFamilyOf2LocalFiniteGroups}
that for every $\left(x,\varphi\right)\in B_{0}\rtimes S_{3}$ then
$\sigma\left(\left(x,\varphi\right)\right)=\left(\sigma\left(x\right),\varphi\right)$.
In particular $K_{\sigma}\cong\left(B_{0}\right)_{\sigma}\rtimes S_{3}$.
Since $\left(B_{0}\right)_{\sigma}\le H_{\sigma}$ we conclude that
$K_{\sigma}$ is also finite. 

From \cite[Lemma 7.5(a)]{GroupTheoreticApproachFamilyOf2LocalFiniteGroups}
we know that $S_{\sigma}$ is a Sylow $2$-subgroup of, $B_{\sigma}$,
$H_{\sigma}$ and $K_{\sigma}$.

From \cite[Theorem A(3)]{GroupTheoreticApproachFamilyOf2LocalFiniteGroups}
we know that $\F_{\Sol}=\FAB{S_{\sigma}}{G_{\sigma}}$. Since $G_{\sigma}\cong H_{\sigma}*_{B_{\sigma}}K_{\sigma}$
(see \cite[Lemma 7.4(b)]{GroupTheoreticApproachFamilyOf2LocalFiniteGroups})
we can deduce that $\F_{\Sol}$ is generated by the fusion subsystems
$\F_{1}:=\FAB{S_{\sigma}}{H_{\sigma}}$ and $\F_{2}:=\FAB{S_{\sigma}}{K_{\sigma}}$.
We can therefore conclude that $\F_{\Sol}=\FSG[H_{\sigma}*_{S}K_{\sigma}]$.

From \cite[Proposition 3.3 (a)]{BensonSolomonFS} we know that the
$\F$-centric subgroups of $S$ coincide with the $\F_{1}$-centric
subgroups of $S$. We conclude from \cite[Theorem B]{diaz2014mackey}
that $\limn_{\OFC[\F_{1}]}\left(M\downarrow_{\OFC[\F_{1}]}^{\OF}\right)=0$
for every $n\ge0$.

Let's now prove that the same happens for $\F_{2}$. With notation
as above we know from \cite[Lemma 4.5]{GroupTheoreticApproachFamilyOf2LocalFiniteGroups}
that there exists a normal subgroup $U\trianglelefteq S_{\sigma}$
such that $U\cong C_{2}\times C_{2}$ and that $\left(B_{0}\right)_{\sigma}=C_{H_{\sigma}}\left(U\right)$.
Since $U$ is normal in $S_{\sigma}$ we deduce that $C_{S_{\sigma}}\left(U\right)=\left(B_{0}\right)_{\sigma}\cap S_{\sigma}$
has index at most $2$ in $S_{\sigma}$. Since the center of $S_{\sigma}$
has order $p$ we conclude that $C_{S_{\sigma}}\left(U\right)$ has
index $2$ in $S_{\sigma}$ and that $U$ is fully $\F_{1}$-centralized.
Since $U$ is abelian we know from \cite[Proposition 2.5(a)]{BrotoLeviOliverHomotopyTheoryOfFusionSystems}
that a subgroup of $C_{S_{\sigma}}\left(U\right)$ is $\F_{1}$-centric
if and only if it is $\FAB{C_{S_{\sigma}}\left(U\right)}{\left(B_{0}\right)_{\sigma}}$-centric.
Because of \cite[Proposition 3.3 (a)]{BensonSolomonFS} this is equivalent
to saying that a subgroup of $C_{S_{\sigma}}\left(U\right)$ is $\F_{\Sol}$-centric
if and only if it is $\FAB{C_{S_{\sigma}}\left(U\right)}{\left(B_{0}\right)_{\sigma}}$-centric.
Since $\FAB{C_{S_{\sigma}}\left(U\right)}{\left(B_{0}\right)_{\sigma}}\subseteq\F_{2}\subseteq\F_{\Sol}$
we can conclude that every subgroup of $S_{\sigma}$ that is $\F_{\Sol}$-conjugate
to a subgroup of $C_{S_{\sigma}}\left(U\right)$ is $\F_{2}$-centric
if and only if it is $\F_{\Sol}$-centric. On the other hand we know
from \cite[Lemma 1.10]{GroupTheoreticApproachFamilyOf2LocalFiniteGroups}
that $\F_{2}$ is generated by the fusion subsystems $\FS[S_{\sigma}]$
and $\FAB{C_{S_{\sigma}}\left(U\right)}{K_{\sigma}}$. Therefore,
for every $P\le S_{\sigma}$ that is not $\F_{2}$-conjugate to a
subgroup of $C_{S_{\sigma}}\left(U\right)$, we have $\Aut_{\F_{2}}\left(P\right)=\Aut_{\FS[S_{\sigma}]}\left(P\right)$.
We conclude that the only $\F_{2}$-centric-radical subgroup of $S_{\sigma}$
that is not $\F_{2}$-conjugate to a subgroup of $C_{S_{\sigma}}\left(U\right)$
is $S_{\sigma}$. Since $S_{\sigma}$ is $\F_{\Sol}$-centric then
we can apply \cite[Proposition 10.5]{HigherLimitsOverTheFusionOrbitCategory}
and Corollary \ref{cor:iso-on-higher-limits-Zp.} to deduce that $\limn_{\OFC[\F_{2}]}\left(M\downarrow_{\OFC[\F_{2}]}^{\OF}\right)\cong\limn_{\OFc[\F_{2}]}\left(M\downarrow_{\OFc[\F_{2}]}^{\OF}\right)$
for every $n$. We conclude from \cite[Theorem B]{diaz2014mackey}
that $\limn_{\OFC[\F_{2}]}\left(M\downarrow_{\OFC[\F_{2}]}^{\OF}\right)=0$
for every $n\ge1$. 

Using the same arguments we deduce that $\limn_{\OFC[\F_{S_{\sigma}}\left(S_{\sigma}\right)]}\left(M\downarrow_{\OFC[\F_{S_{\sigma}}\left(S_{\sigma}\right)]}^{\OF}\right)=0$
for every $n\ge0$. All conditions needed to apply Theorem \hyperref[thm:A]{A}
are therefore satisfied thus proving Item (\ref{enu:Theorem-B-1}).

\medskip{}

Let $\hat{G}_{\sigma}:=H_{\sigma}*_{S_{\sigma}}K_{\sigma}$, let $P\le S$
and let $\pi:\hat{G}_{\sigma}\to G_{\sigma}$ be the natural surjective
map. Let $x=a_{H_{\sigma},n}a_{K_{\sigma},n}\cdots a_{H_{\sigma},1}a_{K_{\sigma},1}$
be an element of $C_{G_{\sigma}}\left(P\right)$ with $a_{J,i}\in J$
for every $J\in\left\{ H_{\sigma},K_{\sigma}\right\} $ and $i=1,\dots,n$.
Since $\lui{x}{P}=P\le B_{\sigma}$ then \cite[Lemma 1]{RobinsonAmalgams}
tells us that for every $i=1,\dots,n$ we have that $\hat{a}_{H_{\sigma},i}:=a_{H_{\sigma},i}\cdots a_{K_{\sigma},1}\in N_{G_{\sigma}}\left(P,B_{\sigma}\right)$
and that $\hat{a}_{K_{\sigma},i}:=a_{K_{\sigma},i}\cdots a_{K_{\sigma},1}\in N_{G_{\sigma}}\left(P,B_{\sigma}\right)$.
Since $S_{\sigma}\in\Syl_{2}\left(B_{\sigma}\right)$ and $P$ is
a $p$-group then for every $i=1,\dots,n$ there exist elements $b_{K_{\sigma},i},b_{H_{\sigma},i}\in B_{\sigma}$
such that $b_{K_{\sigma},i}\hat{a}_{K_{\sigma},i}$ and $b_{H_{\sigma},i}\hat{a}_{H_{\sigma},i}$
are elements of $N_{G_{\sigma}}\left(P,S_{\sigma}\right)$. Without
loss of generality we can take $b_{H_{\sigma},n}$ to be the identity
element and define $b_{H_{\sigma},0}:=b_{H_{\sigma},n}$. For every
$i=1,\dots,n$ define $c_{K_{\sigma},i}:=b_{K_{\sigma},i}a_{K_{\sigma},i}b_{H_{\sigma},i-1}^{-1}$
and $c_{H_{\sigma},i}:=b_{H_{\sigma},i}a_{H_{\sigma},i}b_{K_{\sigma},i}^{-1}$.
Viewing $c_{K_{\sigma},i}$ and $c_{H_{\sigma},i}$ as elements of
$\hat{G}_{\sigma}$ we can now define $\tilde{x}=c_{H_{\sigma},n}c_{K_{\sigma},n}\cdots c_{H_{\sigma},1}c_{K_{\sigma},1}\in\hat{G}_{\sigma}$.
By construction we have that $x=\pi\left(\tilde{x}\right)$ and that
both $c_{H_{\sigma},i}\cdots c_{K_{\sigma},1}$ and $c_{K_{\sigma},i}\cdots c_{K_{\sigma},1}$
are elements of $N_{\hat{G}_{\sigma}}\left(P,S\right)$ for every
$i=1,\dots,n$. Since $x\in C_{G_{\sigma}}\left(P\right)$ then we
deduce that $\tilde{x}\in C_{\hat{G}_{\sigma}}\left(P\right)$. We
conclude that the restriction $\pi_{|C_{\hat{G}_{\sigma}}\left(P\right)}$
of $\pi$ to the centralizer in $\hat{G}_{\sigma}$ of $P$ is also
a surjective group morphism onto $C_{G_{\sigma}}\left(P\right)$.
This morphism leads in turn to a surjective group morphism $\tilde{\pi}:C_{\hat{G}_{\sigma}}\left(P\right)/Z\left(p\right)\twoheadrightarrow C_{G_{\sigma}}\left(P\right)/Z\left(p\right)$.
Since $C_{G_{\sigma}}\left(P\right)\cong Z\left(p\right)\times\theta_{\sigma}\left(P\right)$
then the result follows.
\end{proof}

\appendix

\section{\protect\label{app:Higher-limits-over-Fp-and-Zp.}Higher limits over
$\Fp$-modules and $\Zp$-modules.}

Let $p$ be a prime, let $\boldsymbol{C}$ be a small category with
finitely many objects, let $F:\boldsymbol{C}\to\Fp\text{-Mod}$ be
a functor and let $\iota:\Fp\text{-Mod}\to\Zp\text{-Mod}$ be the
natural inclusion of categories. In this appendix we prove that $\limn_{\boldsymbol{C}}\left(F\right)\cong\limn_{\boldsymbol{C}}\left(\iota\circ F\right)$
as abelian groups for every non negative integer $n$. This results
is widely used in the literature, but we were unable to find any reference
proving it. Therefore, for the sake of completeness, we include a
proof in this paper.

Let us start by relating free resolutions of $\Zp$-modules with free
resolutions of $\Fp$-modules.
\begin{lem}
\label{lem:free-res-to-free-res.}Let $\boldsymbol{C}$ be a small
category with finitely many objects, let $\R$ be a commutative ring,
let $x\in\R$ be a non zero divisor, let $M$ be an $\R\boldsymbol{C}$-module
such that $m\cdot x\not=0$ for every $m\in M$, let $F_{*}\stackrel{\varepsilon}{\to}M\to0$
be a free resolution of $M$ in $\R\boldsymbol{C}$ and define the
quotient ring $\RR:=\R/x\R$. By viewing $\RR\boldsymbol{C}$ as an
$\left(\R\boldsymbol{C},\RR\boldsymbol{C}\right)$-bimodule in the
natural way we have that $F_{*}\otimes_{\R\boldsymbol{C}}\RR\boldsymbol{C}\stackrel{\varepsilon\otimes\Id_{\RR\boldsymbol{C}}}{\longrightarrow}M\otimes_{\R\boldsymbol{C}}\RR\boldsymbol{C}\to0$
is a free resolution of the $\RR\boldsymbol{C}$-module $M\otimes_{\R\boldsymbol{C}}\RR\boldsymbol{C}$.
\end{lem}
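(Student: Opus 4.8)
The plan is to verify separately that each term $F_{i}\otimes_{\R\boldsymbol{C}}\RR\boldsymbol{C}$ is a free $\RR\boldsymbol{C}$-module and that the augmented complex $F_{*}\otimes_{\R\boldsymbol{C}}\RR\boldsymbol{C}\to M\otimes_{\R\boldsymbol{C}}\RR\boldsymbol{C}\to 0$ is acyclic; freeness is immediate, so the real content is the acyclicity. First I would record the identification $\RR\boldsymbol{C}\cong\R\boldsymbol{C}/x\R\boldsymbol{C}$: the coefficient-reduction map $\R\boldsymbol{C}\to\RR\boldsymbol{C}$ is surjective with kernel $x\R\boldsymbol{C}$, since $\R\boldsymbol{C}$ is free as an $\R$-module on the morphisms of $\boldsymbol{C}$. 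Consequently, for any right $\R\boldsymbol{C}$-module $N$ one has $N\otimes_{\R\boldsymbol{C}}\RR\boldsymbol{C}\cong N/Nx=N/xN$, the last equality because $x$ is central in $\R\boldsymbol{C}$. In particular, since each $F_{i}$ is a direct sum of copies of $\R\boldsymbol{C}$ and tensor products commute with direct sums, $F_{i}\otimes_{\R\boldsymbol{C}}\RR\boldsymbol{C}$ is a direct sum of copies of $\RR\boldsymbol{C}$, hence $\RR\boldsymbol{C}$-free, and $\varepsilon\otimes\Id_{\RR\boldsymbol{C}}$ is surjective by right exactness of the tensor product.

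For the acyclicity, observe that $x$ is a non-zero-divisor in $\R$ and $\R\boldsymbol{C}$ is $\R$-free, so multiplication by $x$ is injective on $\R\boldsymbol{C}$ and hence on every free $\R\boldsymbol{C}$-module, in particular on each $F_{i}$. Since the differentials of $F_{*}$ are $\R\boldsymbol{C}$-linear, they commute with multiplication by the central element $x$, so there is a short exact sequence of chain complexes
\[
0\to F_{*}\stackrel{\cdot x}{\longrightarrow}F_{*}\longrightarrow F_{*}\otimes_{\R\boldsymbol{C}}\RR\boldsymbol{C}\to 0 .
\]
Passing to the associated long exact sequence in homology and using that $F_{*}\to M$ is a resolution, so that $H_{i}(F_{*})=0$ for $i\ge 1$ and $H_{0}(F_{*})=M$, one obtains $H_{i}\bigl(F_{*}\otimes_{\R\boldsymbol{C}}\RR\boldsymbol{C}\bigr)=0$ for $i\ge 2$, an exact sequence $0\to H_{1}\bigl(F_{*}\otimes_{\R\boldsymbol{C}}\RR\boldsymbol{C}\bigr)\to M\stackrel{\cdot x}{\longrightarrow}M$, and $H_{0}\bigl(F_{*}\otimes_{\R\boldsymbol{C}}\RR\boldsymbol{C}\bigr)\cong M/xM\cong M\otimes_{\R\boldsymbol{C}}\RR\boldsymbol{C}$, the last isomorphism induced by $\varepsilon\otimes\Id_{\RR\boldsymbol{C}}$. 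The hypothesis on $M$ is precisely that multiplication by $x$ on $M$ is injective, so $H_{1}\bigl(F_{*}\otimes_{\R\boldsymbol{C}}\RR\boldsymbol{C}\bigr)=0$ as well. This is exactly the statement that $F_{*}\otimes_{\R\boldsymbol{C}}\RR\boldsymbol{C}\to M\otimes_{\R\boldsymbol{C}}\RR\boldsymbol{C}\to 0$ is a free resolution. (Equivalently, one is computing $\operatorname{Tor}_{*}^{\R\boldsymbol{C}}(M,\RR\boldsymbol{C})$ through the length-one free resolution $0\to\R\boldsymbol{C}\stackrel{\cdot x}{\to}\R\boldsymbol{C}\to\RR\boldsymbol{C}\to 0$ of $\RR\boldsymbol{C}$.)

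I do not expect a genuine obstacle here: the argument is a standard base-change computation, and the only steps that actually use the hypotheses are the injectivity of $\cdot x$ on the free modules $F_{i}$ (which needs that $x$ is a non-zero-divisor in $\R$ and that category algebras are free over the ground ring) and the injectivity of $\cdot x$ on $M$ (which is the displayed assumption and is what forces the a priori possibly nonzero $H_{1}$ to vanish). The one point requiring care is the side convention: since the paper works with right modules, $\RR\boldsymbol{C}$ is regarded as an $(\R\boldsymbol{C},\RR\boldsymbol{C})$-bimodule, $-\otimes_{\R\boldsymbol{C}}\RR\boldsymbol{C}$ as a functor into right $\RR\boldsymbol{C}$-modules, and centrality of $x$ is used throughout to identify $Nx$ with $xN$.
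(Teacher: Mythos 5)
Your proof is correct and follows essentially the same route as the paper: a short exact sequence of chain complexes induced by multiplication by the central non-zero-divisor $x$, followed by the long exact sequence in homology, with the hypothesis on $M$ supplying the final vanishing. The only (cosmetic) difference is that the paper forms the augmented complex $C_{*}$ with $M$ placed in degree $-1$ and runs the argument there, so all homology groups vanish at once, whereas you work with $F_{*}$ alone and then read off the behaviour at $H_{1}$ and $H_{0}$ from the tail of the long exact sequence; these are the same computation packaged differently.
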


\begin{proof}
For every free $\R\boldsymbol{C}$-module $F\cong\left(\R\boldsymbol{C}\right)^{n}$
we know that $F\otimes_{\R\boldsymbol{C}}\RR\boldsymbol{C}\cong\left(\RR\boldsymbol{C}\right)^{n}$
is a free $\RR\boldsymbol{C}$-module. Therefore we only need to prove
that the sequence in the statement is exact. 

For every $\R\boldsymbol{C}$-module $N$ let $x\cdot:N\to N$ denote
the endomorphism of $\R\boldsymbol{C}$-modules corresponding to multiplication
by $x$ (seen as an element in $\R\boldsymbol{C}$). Since $x$ is
not a zero divisor of $\R$ then, when seen as an element in $\R\boldsymbol{C}$,
it is not a zero divisor of $\R\boldsymbol{C}$ either. Therefore,
since each $F_{i}$ is free we can deduce that the endomorphisms $x\cdot:F_{i}\to F_{i}$
are injective for every $i$. By definition of $M$ we also know that
the endomorphism $x\cdot:M\to M$ is injective. Take now the chain
complex $C_{*}$ defined by setting $C_{-1}=M$, $C_{i}=F_{i}$ for
every $i\ge0$ and $C_{j}=0$ for every $j\le-2$. Since $\RR\boldsymbol{C}\cong\R\boldsymbol{C}/\left(\R\boldsymbol{C}x\right)$
by construction then, viewing $\RR\boldsymbol{C}$ as an $\left(\R\boldsymbol{C},\R\boldsymbol{C}\right)$-bimodule
in the natural way, we obtain the exact sequence of chain complexes
\[
0\to C_{*}\stackrel{x\cdot}{\to}C_{*}\to\left(C_{*}\otimes_{\R\boldsymbol{C}}\RR\boldsymbol{C}\right)\to0.
\]
This in turn leads (see \cite[Theorem 1.3.1]{WeibelIntroductionToHomologicalAlgebra1994})
to the following long exact sequence in homology 
\[
\cdots\to H_{1}\left(C_{*}\otimes_{\R\boldsymbol{C}}\RR\boldsymbol{C}\right)\to H_{0}\left(C_{*}\right)\to H_{0}\left(C_{*}\right)\to H_{0}\left(C_{*}\otimes_{\R\boldsymbol{C}}\RR\boldsymbol{C}\right)\to0.
\]
Since $F_{*}\stackrel{\varepsilon}{\to}M\to0$ is a free resolution
of $M$ then, in particular, we have that it is a long exact sequence.
Equivalently we have that $H_{n}\left(C_{*}\right)=0$ for every $n\in\Z$.
From the above long exact sequence we can therefore conclude that
$H_{n}\left(C_{*}\otimes_{\R\boldsymbol{C}}\RR\boldsymbol{C}\right)=0$
for every $n\in\Z$. Equivalently the sequence of $\R\boldsymbol{C}$-modules
$F_{*}\otimes_{\R\boldsymbol{C}}\RR\boldsymbol{C}\stackrel{\varepsilon\otimes\Id_{\RR\boldsymbol{C}}}{\longrightarrow}M\otimes_{\R\boldsymbol{C}}\RR\boldsymbol{C}$
is exact. The result follows from viewing this sequence as a sequence
of $\RR\boldsymbol{C}$-modules.
\end{proof}
The following result, together with Lemma \ref{lem:free-res-to-free-res.},
allows us to switch from free resolutions in $\Zp\Mod$ to free resolutions
in $\Fp\Mod$ by applying a certain contravariant $\Hom$ functor.
\begin{lem}
\label{lem:isomorphism-of-homs.}Let $\R$ be a ring, let $I$ be
a two sided ideal of $\R$, let $\RR:=\R/I$, let $\iota:\RR\Mod\hookrightarrow\R\Mod$
be the natural inclusion of categories, let $M$ be an $\RR$-module
and let $N$ be an $\R$-module. Viewing $\RR$ as an $\left(\R,\RR\right)$-bimodule
in the natural way there exists an isomorphism of abelian groups $\Hom_{\R}\left(N,\iota\left(M\right)\right)\cong\Hom_{\RR}\left(N\otimes_{\R}\RR,M\right)$
which is natural in both $N$ and $M$.
\end{lem}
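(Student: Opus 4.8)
The statement is the base-change (tensor--hom) adjunction in the special case of a quotient ring, so the plan is to exhibit the isomorphism explicitly and reduce the claim to two short verifications together with a routine naturality check. First I would define $\Phi\colon\Hom_{\R}\left(N,\iota(M)\right)\to\Hom_{\RR}\left(N\otimes_{\R}\RR,M\right)$ by sending an $\R$-linear map $f\colon N\to\iota(M)$ to the additive map determined on generators by $n\otimes\bar r\mapsto f(n)\cdot\bar r$, where $\bar r$ is the image of $r$ in $\RR$ and the action on the right is the given $\RR$-module structure of $M$. The one point that needs genuine care is that the assignment $(n,\bar r)\mapsto f(n)\cdot\bar r$ is $\R$-balanced; this comes down to the identity $f(n a)\cdot\bar r=\left(f(n)\cdot\bar a\right)\cdot\bar r=f(n)\cdot(\bar a\,\bar r)$, which holds because $f$ is $\R$-linear and, by the definition of $\iota$, the $\R$-action on $\iota(M)$ is the restriction of the $\RR$-action along the projection $\R\twoheadrightarrow\RR$ — equivalently, the left $\R$-action on $\RR$ in its natural $(\R,\RR)$-bimodule structure factors through $\RR$. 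Once balancedness is established, $\Phi(f)$ is well defined, and it is $\RR$-linear since $\Phi(f)\bigl((n\otimes\bar r)\bar s\bigr)=\Phi(f)(n\otimes\bar r\bar s)=f(n)\cdot\bar r\bar s=\Phi(f)(n\otimes\bar r)\cdot\bar s$.

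Next I would define the candidate inverse $\Psi\colon\Hom_{\RR}\left(N\otimes_{\R}\RR,M\right)\to\Hom_{\R}\left(N,\iota(M)\right)$ by $\Psi(g)(n)=g(n\otimes 1)$; its $\R$-linearity is immediate from $\Psi(g)(n a)=g(n a\otimes 1)=g(n\otimes\bar a)=g\bigl((n\otimes 1)\bar a\bigr)=g(n\otimes 1)\cdot\bar a$, the final expression being the $\R$-action on $\iota(M)$. Then $\Phi$ and $\Psi$ are mutually inverse: $\Psi(\Phi(f))(n)=\Phi(f)(n\otimes 1)=f(n)\cdot 1=f(n)$, and on generators $\Phi(\Psi(g))(n\otimes\bar r)=\Psi(g)(n)\cdot\bar r=g(n\otimes 1)\cdot\bar r=g(n\otimes\bar r)$ using the $\RR$-linearity of $g$. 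Since elements of the form $n\otimes\bar r$ generate $N\otimes_{\R}\RR$, this shows $\Phi(\Psi(g))=g$, and all four maps above are visibly homomorphisms of abelian groups, so $\Phi$ is the desired isomorphism.

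Finally, naturality in $N$ and in $M$ is a direct diagram chase from the explicit formula for $\Phi$: for an $\RR$-linear $\alpha\colon M\to M'$ one checks $\Phi_{M'}(\iota(\alpha)\circ f)=\alpha\circ\Phi_{M}(f)$ by evaluating both sides on $n\otimes\bar r$, and for an $\R$-linear $\beta\colon N'\to N$ one checks $\Phi_{M}(f\circ\beta)=\Phi_{M}(f)\circ(\beta\otimes\Id_{\RR})$ the same way. I do not anticipate any real obstacle; the only step requiring thought is the balancedness verification underpinning the well-definedness of $\Phi(f)$, and that is precisely where the hypothesis that $\RR$ is regarded with its natural $(\R,\RR)$-bimodule structure is used. (Alternatively, one could simply invoke the standard tensor--hom adjunction $\Hom_{\RR}(N\otimes_{\R}\RR,M)\cong\Hom_{\R}(N,\Hom_{\RR}(\RR,M))$ together with the canonical identification $\Hom_{\RR}(\RR,M)\cong\iota(M)$, but spelling out $\Phi$ and $\Psi$ keeps the later applications of this lemma transparent.)
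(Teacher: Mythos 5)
Your proof is correct and follows essentially the same route as the paper: both use the same explicit map ($\Gamma$ in the paper, $\Phi$ in yours) sending $f$ to $n\otimes\bar r\mapsto f(n)\cdot\bar r$, and both conclude by noting it is an additive bijection. The only difference is in packaging: the paper invokes the standard restriction/extension-of-scalars adjunction to get bijectivity and then checks additivity, whereas you construct the inverse $\Psi$ and verify balancedness and mutual inversion from scratch.
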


\begin{proof}
The functor $\iota$ is in fact the restriction functor resulting
from the projection $\pi:\R\to\RR$. It is well known that such restriction
functor is right adjoint to the functor $-\otimes_{\R}\RR:\R\Mod\to\RR\Mod$.
More precisely there exists a natural bijection $\Gamma:\Hom_{\R}\left(N,\iota\left(M\right)\right)\bjarrow\Hom_{\RR}\left(N\otimes_{\R}\RR,M\right)$
which sends every morphism $f\in\Hom_{\R}\left(N,\iota\left(M\right)\right)$
to the morphism $\Gamma\left(f\right)\in\Hom_{\RR}\left(N\otimes_{\R}\RR,M\right)$
defined by setting 
\[
\Gamma\left(f\right)\left(n\otimes\pi\left(x\right)\right)=f\left(n\right)\cdot\pi\left(x\right),
\]
for every $n\in N$ and every $x\in\R$.

For every $f,g\in\Hom_{\R}\left(N,\iota\left(M\right)\right)$, every
$x\in\R$ and every $n\in N$ we have that
\[
\Gamma\left(f+g\right)\left(n\otimes\pi\left(x\right)\right)=\left(f+g\right)\left(n\right)\cdot\pi\left(x\right)=f\left(n\right)\cdot\pi\left(x\right)+g\left(n\right)\cdot\pi\left(x\right)=\Gamma\left(f\right)\left(n\otimes\pi\left(x\right)\right)+\Gamma\left(g\right)\left(n\otimes\pi\left(x\right)\right).
\]
We can therefore deduce that $\Gamma\left(f+g\right)=\Gamma\left(f\right)+\Gamma\left(g\right)$.
Since $\Gamma$ sends the zero morphism to the zero morphism this
implies that $\Gamma$ is in fact a morphism of abelian groups thus
concluding the proof.
\end{proof}
As a consequence of Lemmas \ref{lem:free-res-to-free-res.} and \ref{lem:isomorphism-of-homs.}
we have the following.
\begin{prop}
\label{prop:iso-on-higher-limits-general.}Let $\R,\boldsymbol{C},x$
and $\RR$ be as in Lemma \ref{lem:free-res-to-free-res.} and let
$\iota:\RR\Mod\to\R\Mod$ be the natural inclusion of categories.
For every non negative integer $n$ and every contravariant functor
$M:\boldsymbol{C}^{\text{op}}\to\RR\Mod$ there exists an isomorphism
of abelian groups $\limn_{\boldsymbol{C}}\left(M\right)\cong\limn_{\boldsymbol{C}}\left(\iota\circ M\right)$
which is natural in $M$.
\end{prop}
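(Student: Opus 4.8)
The plan is to reduce the statement to a comparison of $\Ext$-groups and then transport a single free resolution across the ring surjection $\R\boldsymbol{C}\twoheadrightarrow\RR\boldsymbol{C}$. By \cite[Corollary 5.2]{WebbRepresentationAndCohomologyOfCategories}, for any commutative ring $\mathcal{T}$ and any $\mathcal{T}\boldsymbol{C}$-module $N$ there is an isomorphism $\limn_{\boldsymbol{C}}(N)\cong\Ext^{n}_{\mathcal{T}\boldsymbol{C}}(\underline{\mathcal{T}}^{\boldsymbol{C}},N)$, natural in $N$; applying it once over $\RR$ and once over $\R$, it suffices to produce, naturally in $M$, an isomorphism
\[
\Ext^{n}_{\RR\boldsymbol{C}}\!\left(\underline{\RR}^{\boldsymbol{C}},M\right)\;\cong\;\Ext^{n}_{\R\boldsymbol{C}}\!\left(\underline{\R}^{\boldsymbol{C}},\iota\circ M\right).
\]

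First I would fix a free resolution $F_{*}\stackrel{\varepsilon}{\to}\underline{\R}^{\boldsymbol{C}}\to0$ in $\R\boldsymbol{C}\Mod$. Since $x$ is a non-zero divisor of $\R$, multiplication by $x$ is injective on each value $\underline{\R}^{\boldsymbol{C}}(c)=\R$, so $\underline{\R}^{\boldsymbol{C}}$ is a module to which Lemma \ref{lem:free-res-to-free-res.} applies; that lemma then gives that $F_{*}\otimes_{\R\boldsymbol{C}}\RR\boldsymbol{C}\to\underline{\R}^{\boldsymbol{C}}\otimes_{\R\boldsymbol{C}}\RR\boldsymbol{C}\to0$ is a free resolution over $\RR\boldsymbol{C}$. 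The one identification needing a brief check is $\underline{\R}^{\boldsymbol{C}}\otimes_{\R\boldsymbol{C}}\RR\boldsymbol{C}\cong\underline{\RR}^{\boldsymbol{C}}$: using that $\R\boldsymbol{C}x=x\R\boldsymbol{C}$ is a two-sided ideal with $\R\boldsymbol{C}/(\R\boldsymbol{C}x)\cong\RR\boldsymbol{C}\cong\R\boldsymbol{C}\otimes_{\R}\RR$ (as in Lemma \ref{lem:free-res-to-free-res.}), one gets $\underline{\R}^{\boldsymbol{C}}\otimes_{\R\boldsymbol{C}}\RR\boldsymbol{C}\cong\underline{\R}^{\boldsymbol{C}}\otimes_{\R}\RR$, and the latter is visibly the constant functor with value $\R/x\R=\RR$ carrying the trivial transition maps.

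Next I would compute the right-hand side of the displayed isomorphism from $F_{*}$, i.e.\ as the cohomology of the cochain complex $\Hom_{\R\boldsymbol{C}}(F_{*},\iota\circ M)$. Applying Lemma \ref{lem:isomorphism-of-homs.} with the ring $\R\boldsymbol{C}$, the two-sided ideal $\R\boldsymbol{C}x$, the module $M$, and $N=F_{i}$ (free, hence in particular an $\R\boldsymbol{C}$-module) yields for each $i$ a natural isomorphism $\Hom_{\R\boldsymbol{C}}(F_{i},\iota\circ M)\cong\Hom_{\RR\boldsymbol{C}}(F_{i}\otimes_{\R\boldsymbol{C}}\RR\boldsymbol{C},M)$. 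Because that isomorphism is natural in its first variable, these assemble into an isomorphism of cochain complexes
\[
\Hom_{\R\boldsymbol{C}}(F_{*},\iota\circ M)\;\cong\;\Hom_{\RR\boldsymbol{C}}\!\left(F_{*}\otimes_{\R\boldsymbol{C}}\RR\boldsymbol{C},\,M\right),
\]
natural in $M$. Taking cohomology of both sides, and using that $F_{*}\otimes_{\R\boldsymbol{C}}\RR\boldsymbol{C}$ is the free resolution of $\underline{\RR}^{\boldsymbol{C}}$ built in the previous step, the right-hand complex computes $\Ext^{n}_{\RR\boldsymbol{C}}(\underline{\RR}^{\boldsymbol{C}},M)$, completing the argument; naturality in $M$ is inherited from the naturality statements used.

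I expect the main obstacle to be organizational rather than mathematical: one has to verify that the termwise isomorphisms of Lemma \ref{lem:isomorphism-of-homs.} commute with the induced differentials (exactly the naturality of the tensor--hom adjunction in the covariant variable) and that the identification $\underline{\R}^{\boldsymbol{C}}\otimes_{\R\boldsymbol{C}}\RR\boldsymbol{C}\cong\underline{\RR}^{\boldsymbol{C}}$ respects the $\boldsymbol{C}$-module structure and not merely the underlying $\RR$-module. Beyond Lemmas \ref{lem:free-res-to-free-res.} and \ref{lem:isomorphism-of-homs.} together with \cite[Corollary 5.2]{WebbRepresentationAndCohomologyOfCategories}, no further homological input should be required.
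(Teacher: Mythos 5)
Your proof is correct and follows essentially the same route as the paper: reduce to $\Ext$ via \cite[Corollary 5.2]{WebbRepresentationAndCohomologyOfCategories}, transport a free resolution of $\underline{\R}^{\boldsymbol{C}}$ across $-\otimes_{\R\boldsymbol{C}}\RR\boldsymbol{C}$ using Lemma \ref{lem:free-res-to-free-res.}, and apply Lemma \ref{lem:isomorphism-of-homs.} termwise. The only difference is presentational: you phrase the $\Hom$-comparison as an isomorphism of cochain complexes and invoke naturality to commute with differentials, while the paper writes out the $\ker/\im$ description directly.
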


\begin{proof}
Let $\underline{\R}^{\boldsymbol{C}}$ be as in Definition \ref{def:Constant-functor.},
let $F_{*}\stackrel{\varepsilon}{\to}\underline{\R}^{\boldsymbol{C}}\to0$
be a free resolution of $\underline{\R}^{\boldsymbol{C}}$ in $\R\boldsymbol{C}\Mod$,
denote by $d_{n}:F_{n+1}\to F_{n}$ its differentials and view $\iota\circ M:\boldsymbol{C}^{\text{op}}\to\R\Mod$
as an $\R\boldsymbol{C}$-module (see \cite[Proposition 2.1]{WebbRepresentationAndCohomologyOfCategories}).
By definition of the $\Ext_{\R\boldsymbol{C}}^{n}$ groups, for every
integer $n\ge0$ we have the isomorphism of abelian groups
\[
\Ext_{\R\boldsymbol{C}}^{n}\left(\underline{\R}^{\boldsymbol{C}},\iota\circ M\right)\cong\frac{\ker\left(d_{n}^{*}:\Hom_{\R}\left(F_{n},\iota\circ M\right)\to\Hom_{\R}\left(F_{n+1},\iota\circ M\right)\right)}{\im\left(d_{n-1}^{*}:\Hom_{\R}\left(F_{n-1},\iota\circ M\right)\to\Hom_{\R}\left(F_{n},\iota\circ M\right)\right)},
\]
where we take $d_{-1}:=0$ and $F_{-1}=0$. Because of Lemma \ref{lem:isomorphism-of-homs.}
we can obtain from the above the following isomorphism of abelian
groups
\begin{equation}
\Ext_{\R\boldsymbol{C}}^{n}\left(\underline{\R}^{\boldsymbol{C}},\iota\circ M\right)\cong\frac{\ker\left(\left(d_{n}\otimes\Id_{\RR\boldsymbol{C}}\right)^{*}:\Hom_{\RR}\left(F_{n}\otimes_{\R\boldsymbol{C}}\RR\boldsymbol{C},M\right)\to\Hom_{\RR}\left(F_{n+1}\otimes_{\R\boldsymbol{C}}\RR\boldsymbol{C},M\right)\right)}{\im\left(\left(d_{n-1}\otimes\Id_{\RR\boldsymbol{C}}\right)^{*}:\Hom_{\RR}\left(F_{n-1}\otimes_{\R\boldsymbol{C}}\RR\boldsymbol{C},M\right)\to\Hom_{\RR}\left(F_{n}\otimes_{\R\boldsymbol{C}}\RR\boldsymbol{C},M\right)\right)}.\label{eq:ext-Zp-and-Ext-fp.}
\end{equation}
Since $\underline{\R}^{\boldsymbol{C}}\otimes_{\R\boldsymbol{C}}\RR\boldsymbol{C}\cong\underline{\RR}^{\boldsymbol{C}}$
as $\RR\boldsymbol{C}$-modules then, from Lemma \ref{lem:free-res-to-free-res.}
we obtain the free resolution $F_{*}\otimes_{\R\boldsymbol{C}}\RR\boldsymbol{C}\stackrel{\varepsilon\otimes\Id_{\RR\boldsymbol{C}}}{\longrightarrow}\underline{\RR}^{\boldsymbol{C}}\to0$
of $\underline{\RR}^{\boldsymbol{C}}$ in $\RR\boldsymbol{C}\Mod$.
Therefore, from definition of the $\Ext_{\RR\boldsymbol{C}}^{n}$
groups, we can rewrite Equation (\ref{eq:ext-Zp-and-Ext-fp.}) as
\[
\Ext_{\R\boldsymbol{C}}^{n}\left(\underline{\R}^{\boldsymbol{C}},\iota\circ M\right)\cong\Ext_{\RR\boldsymbol{C}}^{n}\left(\underline{\RR}^{\boldsymbol{C}},M\right).
\]
The result follows from the above equivalence of abelian groups and
\cite[Corollary 5.2]{WebbRepresentationAndCohomologyOfCategories}.
\end{proof}
As a corollary of Proposition \ref{prop:iso-on-higher-limits-general.}
we obtain the result that motivates the introduction of this appendix.
\begin{cor}
\label{cor:iso-on-higher-limits-Zp.}Let $\boldsymbol{C}$ be a small
category with finitely many objects, let $p$ be a prime and let $\iota:\Fp\Mod\to\Zp\Mod$
be the natural inclusion. Then, for every non negative integer $n$
and every contravariant functor $M:\boldsymbol{C}^{\text{op}}\to\Fp\Mod$
there exists an isomorphism of abelian groups $\limn_{\boldsymbol{C}}\left(M\right)\cong\limn_{\boldsymbol{C}}\left(\iota\circ M\right)$
which is natural in $M$.
\end{cor}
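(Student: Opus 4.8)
The plan is to obtain this statement as an immediate specialization of Proposition \ref{prop:iso-on-higher-limits-general.}. That proposition is formulated for an arbitrary commutative ring $\R$, a non zero divisor $x\in\R$, and the quotient $\RR:=\R/x\R$, so the corollary should be nothing more than the instance $\R=\Zp$, $x=p$, $\RR=\Fp$.

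First I would verify that this instance is legitimate. The ring $\Zp$ of $p$-adic integers is commutative and in fact an integral domain, so $p$ is a non zero divisor in $\Zp$; moreover there is a canonical ring isomorphism $\Zp/p\Zp\cong\Fp$. Under this identification the natural inclusion $\Fp\Mod\hookrightarrow\Zp\Mod$ appearing in the statement coincides with the restriction of scalars along the projection $\Zp\to\Fp$, which is exactly the functor $\iota$ used in Proposition \ref{prop:iso-on-higher-limits-general.} (cf. the adjunction in Lemma \ref{lem:isomorphism-of-homs.}). I would also point out that the hypothesis on $\boldsymbol{C}$ — finitely many objects — is precisely the one assumed in the corollary, and that the auxiliary condition from Lemma \ref{lem:free-res-to-free-res.} (multiplication by $x$ injective on the module under consideration) is invoked in the proof of Proposition \ref{prop:iso-on-higher-limits-general.} only for the constant module $\underline{\Zp}^{\boldsymbol{C}}$, on which multiplication by $p$ is injective since $\Zp$ is torsion free.

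With these identifications recorded, applying Proposition \ref{prop:iso-on-higher-limits-general.} with $\R=\Zp$ and $x=p$ yields, for every non negative integer $n$ and every contravariant functor $M:\boldsymbol{C}^{\text{op}}\to\Fp\Mod$, a natural isomorphism of abelian groups $\limn_{\boldsymbol{C}}\left(M\right)\cong\limn_{\boldsymbol{C}}\left(\iota\circ M\right)$, which is exactly the assertion of the corollary.

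I do not expect any real obstacle: all the substantive work has already been carried out in Lemmas \ref{lem:free-res-to-free-res.} and \ref{lem:isomorphism-of-homs.} and in Proposition \ref{prop:iso-on-higher-limits-general.}, and what remains is the routine check that the pair $\left(\Zp,p\right)$ satisfies the running hypotheses. If any step merits a sentence of care, it is confirming that the ``natural inclusion'' $\Fp\Mod\hookrightarrow\Zp\Mod$ in the corollary is the same functor as the $\iota$ of the proposition, i.e. that it is restriction of scalars along $\Zp\to\Fp$ and not some other embedding.
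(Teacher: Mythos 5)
Your proof is correct and follows the same route as the paper's: both obtain the corollary by specializing Proposition \ref{prop:iso-on-higher-limits-general.} to $\R=\Zp$ and $x=p$. Your additional checks (that $p$ is a non zero divisor in the integral domain $\Zp$, that $\Zp/p\Zp\cong\Fp$, and that the natural inclusion coincides with restriction of scalars) are exactly the routine verifications implicit in the paper's one-line argument.
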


\begin{proof}
This is just a particular case of Proposition \ref{prop:iso-on-higher-limits-general.}
taken with $\R:=\Zp$ and $x:=p$.
\end{proof}
\printbibliography

@Article{FrobeniusCategoriesPuig,
  author  = {Puig, Lluis},
  journal = {Journal of Algebra},
  title   = {Frobenius categories},
  year    = {2006},
  issn    = {0021-8693},
  pages   = {309--357},
  volume  = {303},
  doi     = {https://doi.org/10.1016/j.jalgebra.2006.01.023},
  url     = {https://www.sciencedirect.com/science/article/pii/S0021869306000378},
}

@InProceedings{IntroductionToFusionSystemsLinckelmann,
  author    = {Linckelmann, Markus},
  booktitle = {Group representation theory},
  title     = {Introduction to fusion systems},
  year      = {2007},
}

@Article{GuideToMackeyFunctorsWebb,
  author  = {Webb, Peter},
  journal = {Handbook of Algebra},
  title   = {A guide to {M}ackey functors},
  year    = {2000},
  pages   = {805--836},
  volume  = {2},
  doi     = {10.1016/S1570-7954(00)80044-3},
  isbn    = {9780444503961},
}

@Article{BrotoLeviOliverHomotopyTheoryOfFusionSystems,
  author   = {Broto, Carles and Levi, Ran and Oliver, Bob},
  journal  = {Journal of the American Mathematical Society},
  title    = {The homotopy theory of fusion systems},
  year     = {2003},
  issn     = {0894-0347},
  pages    = {779--856},
  volume   = {16},
  doi      = {10.1090/S0894-0347-03-00434-X},
  keywords = {classifying space, p-completion, finite groups, fusion, COMPLETED CLASSIFYING-SPACES, HIGHER LIMITS, FINITE-GROUPS, DECOMPOSITION, EQUIVALENCES, EXTENSIONS, SUBGROUPS, DIAGRAMS, CATEGORY, MODULES},
}

@Article{ChermakExistenceLinkingSystem,
  author  = {Chermak, Andrew},
  journal = {Acta Mathematica},
  title   = {Fusion systems and localities},
  year    = {2013},
  pages   = {47--139},
  volume  = {211},
  doi     = {10.1007/s11511-013-0099-5},
  url     = {https://doi.org/10.1007/s11511-013-0099-5},
}

@InProceedings{WebbRepresentationAndCohomologyOfCategories,
  author    = {Webb, Peter},
  booktitle = {Group representation theory},
  title     = {An introduction to the representations and cohomology of categories},
  year      = {2007},
}

@Article{GrazianMarmoSharpnessOfFSOnSylowG2p,
  author   = {Grazian, Valentina and Marmo, Ettore},
  journal  = {preprint arXiv: 2209.07388},
  title    = {Sharpness of saturated fusion systems on a {S}ylow {$p$}-subgroup of {$G_2(p)$}},
  year     = {2022},
  doi      = {10.48550/ARXIV.2209.07388},
  keywords = {Group Theory (math.GR), Algebraic Topology (math.AT), FOS: Mathematics},
  url      = {https://arxiv.org/abs/2209.07388},
}

@Article{BensonSolomonFS,
  author   = {Levi, Ran and Oliver, Bob},
  journal  = {Geometry \& Topology},
  title    = {Construction of $2$-local finite groups of a type studied by {S}olomon and {B}enson},
  year     = {2002},
  pages    = {917--990},
  volume   = {6},
  doi      = {10.2140/gt.2002.6.917},
  keywords = {$p$–completion, classifying space, finite groups, fusion.},
  url      = {https://doi.org/10.2140/gt.2002.6.917},
}

@Article{HigherLimitsOverTheFusionOrbitCategory,
  author   = {Yal{\c{c}}in, Erg{\"{u}}n},
  journal  = {Advances in Mathematics},
  title    = {Higher limits over the fusion orbit category},
  year     = {2022},
  issn     = {0001-8708},
  pages    = {108482},
  volume   = {406},
  abstract = {The fusion orbit category F‾C(G) of a discrete group G over a collection C is the category whose objects are the subgroups H in C, and whose morphisms H→K are given by the G-maps G/H→G/K modulo the action of the centralizer group CG(H). We show that the higher limits over F‾C(G) can be computed using the hypercohomology spectral sequences coming from the Dwyer G-spaces for centralizer and normalizer decompositions for G. If G is the discrete group realizing a saturated fusion system F, then these hypercohomology spectral sequences give two spectral sequences that converge to the cohomology of the centric orbit category Oc(F). This allows us to apply our results to the sharpness problem for the subgroup decomposition of a p-local finite group. We prove that the subgroup decomposition for every p-local finite group is sharp (over F-centric subgroups) if it is sharp for every p-local finite group with nontrivial center. We also show that for every p-local finite group (S,F,L), the subgroup decomposition is sharp if and only if the normalizer decomposition is sharp.},
  doi      = {https://doi.org/10.1016/j.aim.2022.108482},
  keywords = {Fusion systems, Higher limits, Orbit category, -local finite group, Cohomology of small categories},
  url      = {https://www.sciencedirect.com/science/article/pii/S0001870822002997},
}

@Article{DwyerHomologyDecomposition,
  author  = {Dwyer, William G.},
  journal = {Topology},
  title   = {Homology decompositions for classifying spaces of finite groups},
  year    = {1997},
  issn    = {0040-9383},
  pages   = {783--804},
  volume  = {36},
  doi     = {https://doi.org/10.1016/S0040-9383(96)00031-6},
  url     = {https://www.sciencedirect.com/science/article/pii/S0040938396000316},
}

@Article{BensonSolomonFSCorrection,
  author   = {Levi, Ran and Oliver, Bob},
  journal  = {Geometry \& Topology},
  title    = {Correction to: {C}onstruction of $2$-local finite groups of a type studied by {S}olomon and {B}enson},
  year     = {2005},
  pages    = {2395--2415},
  volume   = {9},
  doi      = {10.2140/gt.2005.9.2395},
  keywords = {$p$–completion, classifying space, finite groups, {F}usion},
  url      = {https://doi.org/10.2140/gt.2005.9.2395},
}

@Article{LearyStancuRealisingFusionSystems,
  author   = {Leary, Ian and Stancu, Radu},
  journal  = {Algebra \& Number Theory},
  title    = {Realising fusion systems},
  year     = {2007},
  pages    = {17--34},
  volume   = {1},
  doi      = {10.2140/ant.2007.1.17},
  keywords = {conjugacy, fusion systems, graphs of groups},
  url      = {https://doi.org/10.2140/ant.2007.1.17},
}

@Book{Dwyer2001HomotopyTheorwticMethodsInGroupCohomology,
  author    = {William G. Dwyer and Hans-Werner Henn},
  publisher = {Birkh{\"a}user Basel},
  title     = {{H}omotopy theoretic methods in group cohomology},
  year      = {2001},
  isbn      = {978-3-7643-6605-6},
  doi       = {10.1007/978-3-0348-8356-6},
}

@Article{RobinsonAmalgams,
  author   = {Robinson, Geoffrey R.},
  journal  = {Journal of Algebra},
  title    = {Amalgams, blocks, weights, fusion systems and finite simple groups},
  year     = {2007},
  issn     = {0021-8693},
  pages    = {912--923},
  volume   = {314},
  abstract = {We associate an iterated amalgam of finite groups to a certain class of fusion systems on finite p-groups (p a prime), in such a way that the p-group of the fusion system is a maximal finite p-subgroup of the resulting group, unique up to conjugacy, and, furthermore, the conjugation action of the resulting (usually infinite) group on p-subgroups induces the original fusion system on the p-group.1 In view of earlier work of Puig and of Broto, Castellana, Grodal, Levi and Oliver [C. Broto, N. Castellana, J. Grodal, R. Levi, R. Oliver, Subgroup families controlling p-local finite groups, Proc. London Math. Soc. (3) 91 (2005) 325–354], the fusion systems we deal with include all saturated fusion systems. 1Since this paper was written, we have been informed that I. Leary and R. Stancu are currently writing a paper with a different construction to realise a fusion system on a finite p-group via a group. Their work and ours are independent of each other. The resulting amalgam has free normal subgroups of finite index, and we examine the images of the group by its maximal free normal subgroups of finite index; these images all contain (isomorphic copies of) the original p-group (and are generated by the (images of the) finite groups used in the amalgamation). If there is no non-trivial normal p-subgroup of the fusion system (equivalently, if the iterated amalgam constructed has no non-trivial normal p-subgroup), then the generalised Fitting subgroup of each of these homomorphic images is a direct product of non-Abelian simple groups, each of order divisible by p (and if there is no proper non-trivial strongly closed p-subgroup in the fusion system, then the generalised Fitting subgroup of any of these finite groups is characteristically simple). We note that the (finite-dimensional) representation theory of this amalgam is (almost by construction) p-locally determined. In the case of the fusion system associated to a p-block of a finite group, we suggest strong links between block-theoretic invariants of the above finite epimorphic images of the associated amalgam and of the original block. We believe that the results of this paper offer the prospect of at least the beginnings of a structural explanation for some of the current numerical conjectures in block theory (e.g. [J.L. Alperin, Weights for finite groups, in: Proc. Sympos. Pure Math., vol. 47, Amer. Math. Soc., Providence, 1987, pp. 369–379; E.C. Dade, Counting characters in blocks, I, Invent. Math. 109 (1) (1992) 187–210]).},
  doi      = {https://doi.org/10.1016/j.jalgebra.2007.05.010},
  keywords = {Groups, Representations, Blocks, Fusion},
  url      = {https://www.sciencedirect.com/science/article/pii/S0021869307003080},
}

@Article{SubgroupFamiliesControllingpLocalFiniteGroups,
  author   = {Broto, Carles and Castellana, Nat{\`a}lia and Grodal, Jesper and Levi, Ran and Oliver, Bob},
  journal  = {Proceedings of the London Mathematical Society},
  title    = {Subgroup families controlling $p$-local finite groups},
  year     = {2005},
  issn     = {0024-6115},
  pages    = {325--354},
  volume   = {91},
  abstract = {{A p-local finite group consists of a finite p-group S, together with a pair of categories which encode ‘conjugacy’ relations among subgroups of S, and which are modelled on the fusion in a Sylow p-subgroup of a finite group. It contains enough information to define a classifying space which has many of the same properties as p-completed classifying spaces of finite groups. In this paper, we examine which subgroups control this structure. More precisely, we prove that the question of whether an abstract fusion system F over a finite p-group S is saturated can be determined by just looking at smaller classes of subgroups of S. We also prove that the homotopy type of the classifying space of a given p-local finite group is independent of the family of subgroups used to define it, in the sense that it remains unchanged when that family ranges from the set of F-centric F-radical subgroups (at a minimum) to the set of F-quasicentric subgroups (at a maximum). Finally, we look at constrained fusion systems, analogous to p-constrained finite groups, and prove that they in fact all arise from groups. 2000 Mathematics Subject Classification 20J99 (primary), 55R35, 20D20 (secondary).}},
  doi      = {10.1112/S0024611505015327},
  url      = {https://doi.org/10.1112/S0024611505015327},
}

@Book{FusionSystemsinAlgebraAndTopologyAschbacherKessarOliver2011,
  author    = {Aschbacher, Michael and Kessar, Radha and Oliver, Bob},
  publisher = {Cambridge University Press},
  title     = {{F}usion systems in algebra and topology},
  year      = {2011},
  doi       = {10.1017/CBO9781139003841},
}

@Book{TreesSerre,
  author    = {Serre, Jean-Pierre},
  publisher = {Springer},
  title     = {{T}rees},
  year      = {1980},
  isbn      = {3540101039},
}

@Article{JackowskiMcClureHomotopyDecompositionViaAbelianSubgroups,
  author  = {Jackowski, Stefan and McClure, James},
  journal = {Topology},
  title   = {Homotopy decomposition of classifying spaces via elementary abelian subgroups},
  year    = {1992},
  issn    = {0040-9383},
  pages   = {113--132},
  volume  = {31},
  doi     = {https://doi.org/10.1016/0040-9383(92)90065-P},
  url     = {https://www.sciencedirect.com/science/article/pii/004093839290065P},
}

@Book{HatcherAlgebraicTopology,
  author    = {Hatcher, Allen},
  publisher = {Cambridge University Press},
  title     = {{A}lgebraic topology},
  year      = {2001},
  isbn      = {9780521795401},
}

@InProceedings{dwyer1998sharp,
  author    = {Dwyer, William G.},
  booktitle = {Proceedings of symposia in pure mathematics},
  title     = {Sharp homology decompositions for classifying spaces of finite groups},
  year      = {1998},
}

@Article{GroupTheoreticApproachFamilyOf2LocalFiniteGroups,
  author   = {Aschbacher, Michael and Chermak, Andrew},
  journal  = {Annals of Mathematics},
  title    = {A group-theoretic approach to a family of $2$-local finite groups constructed by {L}evi and {O}liver},
  year     = {2010},
  issn     = {0003-486X},
  pages    = {881--978},
  volume   = {171},
  abstract = {We extend the notion of a p-local finite group (defined in [BLO03]) to the notion of a p-local group. We define morphisms of p-local groups, obtaining thereby a category, and we introduce the notion of a representation of a p-local group via signalizer functors associated with groups. We construct a chain 𝔊 = (𝓖₀ → 𝓖₁ → ···) of 2-local finite groups, via a representation of a chain 𝔊 * = (G₀ → G₁ → ···) of groups, such that 𝓖₀ is the 2-local finite group of the third Conway sporadic group Co₃, and for n > 0, 𝓖 n is one of the 2-local finite groups constructed by Levi and Oliver in [LO02]. We show that the direct limit 𝓖 of 𝔊 exists in the category of 2-local groups, and that it is the 2-local group of the union of the chain 𝔊 * . The 2-completed classifying space of 𝓖 is shown to be the classifying space B D I(4) of the exotic 2-compact group of Dwyer and Wilkerson [DW93].},
  url      = {http://www.jstor.org/stable/20752232},
  urldate  = {2023-09-21},
}

@Article{puncturedGroupsForExoticFusionSystems,
  author  = {Henke, Ellen and Libman, Assaf and Lynd, Justin},
  journal = {Transactions of the London Mathematical Society},
  title   = {Punctured groups for exotic fusion systems},
  year    = {2023},
  issn    = {2052-4986},
  pages   = {21--99},
  volume  = {10},
  doi     = {10.1112/tlm3.12054},
}

@Article{diaz2014mackey,
  author  = {D{\'{i}}az, Antonio and Park, Sejong},
  journal = {Homology, Homotopy and Applications},
  title   = {Mackey functors and sharpness for fusion systems},
  year    = {2015},
  issn    = {1532-0081},
  pages   = {147--164},
  volume  = {17},
  doi     = {10.4310/hha.2015.v17.n1.a7},
}

@Book{BousfieldKanHomotopyLimitsColimitsCompletionsAndLocalizations,
  author    = {Bousfield, Aldridge K. and Kan, Daniel M.},
  publisher = {Springer},
  title     = {{H}omotopy limits, completions and localizations},
  year      = {1987},
  isbn      = {9783540381174},
}

@Article{CohomologyOnTheCentricOrbitCategoryOfAFusionSystem,
  author   = {Glauberman, George and Lynd, Justin},
  journal  = {preprint arXiv: 2310.05361},
  title    = {Cohomology on the centric orbit category of a fusion system},
  year     = {2023},
  abstract = {We study the higher derived limits of mod $p$ cohomology on the centric orbit category of a saturated fusion system on a finite $p$-group. It is an open problem whether all such higher limits vanish. This is known in many cases, including for fusion systems realized by a finite group and for many classes of fusion systems which are not so realized. We prove that the higher limits of $H^j$ vanish provided $j$ is small relative to $p$, by showing that the same is true for the contravariant part of a simple Mackey composition factor of $H^j$ under the same conditions.},
  doi      = {10.48550/ARXIV.2310.05361},
  keywords = {Group Theory (math.GR), Algebraic Topology (math.AT), FOS: Mathematics, 55R40 (Primary) 55R35, 20J06, 20D20 (Secondary)},
  url      = {https://arxiv.org/abs/2310.05361},
}

@Book{WeibelIntroductionToHomologicalAlgebra1994,
  author    = {Weibel, Charles A.},
  publisher = {Cambridge University Press},
  title     = {{A}n introduction to homological algebra},
  year      = {1994},
  isbn      = {0-521-43500-5},
  doi       = {10.1017/CBO9781139644136},
}

@Book{NotesOnTheTheoryOfRepresentationsOfFiniteGroups,
  author    = {Dress, Andreas W. M.},
  publisher = {Bielefeld University},
  title     = {Notes on the theory of representations of finite groups},
  year      = {1971},
  note      = {Duplicated notes},
}

\end{document}